\theoremstyle{plain}
\newtheorem{theorem}{Theorem}[section]
\newtheorem{proposition}[theorem]{Proposition}
\newtheorem{lemma}[theorem]{Lemma}
\newtheorem{corollary}[theorem]{Corollary}
\numberwithin{equation}{section}
\theoremstyle{definition}
\newtheorem{definition}[theorem]{Definition}
\newtheorem{remark}[theorem]{Remark}
\newtheorem{example}[theorem]{Example}
\newtheorem{problem}[theorem]{Problem}
\newcommand{\C}{\mathbb{C}}
\newcommand{\uC}{\underline{\mathbb{C}}}
\newcommand{\Q}{\mathbb{Q}}
\newcommand{\R}{\mathbb{R}}
\newcommand{\Z}{\mathbb{Z}}
\newcommand{\CP}{\mathbb{C}P}
\newcommand{\cF}{\mathcal{F}}
\newcommand{\blambda}{\boldsymbol{\lambda}}
\newcommand{\bpsi}{\bar{\psi}}
\newcommand{\CC}{\underline{\mathbb{C}}}
\newcommand{\n}{n}
\newcommand{\atopp}[2]{\genfrac{}{}{0pt}{}{#1}{#2}}
\DeclareMathOperator{\Hom}{Hom}
\newcommand{\toricmanifold}{non-singular complete toric variety }
\newcommand{\toricmanifolds}{non-singular complete toric varieties }
\newcommand{\smoothcompact}{non-singular complete }
\newcommand{\projectivetoricmanifold}{non-singular projective toric variety }
\begin{document}
\title[Projective bundles over toric surfaces]{Projective bundles over toric surfaces}

\author[S.Choi]{Suyoung Choi}
\address{Department of Mathematics, Ajou University, San 5, Woncheondong, Yeongtonggu, Suwon 443-749, Korea}
\email{schoi@ajou.ac.kr}
%
%\author[M.Masuda]{Mikiya Masuda}
%\address{Department of Mathematics, Osaka City University, Sugimoto, Sumiyoshi-ku, Osaka 558-8585, Japan}
%\email{masuda@sci.osaka-cu.ac.jp}

\author[S.Park]{Seonjeong Park}
\address{Department of Mathematical Sciences, KAIST, 291 Daehak-ro, Yuseong-gu, Daejeon 305-701, Korea}
\email{psjeong@kaist.ac.kr}

\thanks{The first author is partially supported by Basic Science Research Program through the National Research Foundation of Korea(NRF) funded by the Ministry of Education, Science and Technology(2012-0008543), and is supported by TJ Park Science Fellowship funded by POSCO TJ Park Foundation. The second author is supported by the second stage of the Brain Korea 21 Project, the Development of Project of Human Resources in Mathematics, KAIST in 2012.}

\keywords{toric variety, quasitoric manifold, projective bundle, toric surface, cohomological rigidity, combinatorial rigidity, toric topology}
\subjclass{Primary 57S25, 57R19, 57R20, Secondary 14M25}

\date{\today}
\begin{abstract}
    Let $E$ be the Whitney sum of complex line bundles over a topological space $X$. Then, the projectivization $P(E)$ of $E$ is called a \emph{projective bundle} over $X$. If $X$ is a non-singular complete toric variety,
    so is $P(E)$. In this paper, we show that the cohomology ring of a non-singular projective toric variety $M$ determines whether it admits a projective bundle structure over a \smoothcompact toric surface. In addition, we show that two $6$-dimensional projective bundles over
    $4$-dimensional quasitoric manifolds are diffeomorphic if their cohomology rings are isomorphic as graded rings. Furthermore, we study the smooth classification of higher dimensional projective bundles over $4$-dimensional quasitoric manifolds.
\end{abstract}
\maketitle

%\tableofcontents
\section{Introduction}\label{sec:introduction}
    A \emph{toric variety} is a normal algebraic variety of complex dimension $n$ with an action of the algebraic torus $(\C^\ast)^n$ having an open dense orbit.
    A typical example of a \toricmanifold is the complex projective space $\CP^n$ with a linear action of $(\C^\ast)^n$. A toric variety of complex dimension $2$ is called a \emph{toric surface}. It is well known that every \smoothcompact toric surface is projective and that it can be obtained by blow-ups from either $\CP^2$ or one of the Hirzebruch surfaces as an algebraic variety.

    Let $X$ be a topological space and
    $L_i \to X$ a complex line bundle over $X$ for $i=0, \ldots,\n$. For the Whitney sum $E=\bigoplus_{i=0}^n L_i$, we define the projectivization $P(E)$ of $E$ by taking the projectivization of each fiber of $E$. Then, $P(E)$ is a fiber bundle over
    $X$ with the fiber space $\CP^n$. The space $P(E)$ is called a \emph{projective bundle} over $X$.

    Assume that $X$ is a \projectivetoricmanifold of complex dimension $k$. Note that each $L_i$ has a $\C^\ast$-action as a scalar multiplication, and hence, the Whitney sum $E=\bigoplus_{i=0}^n L_i$ has a $(\C^\ast)^{k+n+1}$-action. Hence, the projectivization $P(E)$ of $E$ has an induced $(\C^\ast)^{k+\n}$-action and is also a non-singular projective toric variety.
    From this viewpoint, we can construct an interesting subclass of toric varieties as follows. Starting with $X$ as a point and repeating the above construction $h$ times ($h \in \Z_{\geq0}$), we obtain a new non-singular projective toric variety, which is called an \emph{$h$-stage generalized Bott manifold} (see \cite{ch-ma-su08} for details). In particular, when the complex dimension of a fiber is equal to $1$ at each fibration, the total space is simply referred to as a \emph{Bott manifold}.

    We observe that $\CP^2$ and any two-stage Bott manifold are \smoothcompact toric surfaces. Hence, two-stage generalized Bott manifolds over $\CP^2$ and three-stage Bott manifolds can be regarded as projective bundles over \smoothcompact toric surfaces.

    In this study, we investigate a projective bundle over a \smoothcompact toric surface $S$. Especially, we focus on how much topological information is contained in the cohomology ring of toric varieties.

    Firstly, we find the necessary and sufficient condition in terms of a cohomology ring for a \projectivetoricmanifold $X$ to admit a projective bundle structure over $S$.

    \begin{theorem}\label{main1}
        A \projectivetoricmanifold is equivalent to a projective bundle over a \smoothcompact toric surface if and only if its integral cohomology ring is isomorphic to that of some projective bundle over a \smoothcompact toric surface as graded rings.
    \end{theorem}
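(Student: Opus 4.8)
The forward implication is immediate: a projective bundle over a \smoothcompact toric surface has, by definition, the integral cohomology ring of a projective bundle over a \smoothcompact toric surface, namely its own. So the whole content lies in the converse, and the plan is to show that the graded ring structure alone reconstructs both the base surface and the $\CP^n$-fibration. First I would record the ring-theoretic fingerprint of $P(E)$. By the projective bundle formula, if $E=\bigoplus_{i=0}^n L_i \to S$ then
$$H^*(P(E);\Z)\cong H^*(S;\Z)[x]\big/\Big(\textstyle\prod_{i=0}^n (x-c_1(L_i))\Big),$$
so $H^*(P(E))$ is a free $H^*(S)$-module on $1,x,\dots,x^n$, is generated in degree $2$, and contains a degree-$2$ class $x$ satisfying a \emph{monic} relation of degree $n+1$ with coefficients in the subring $A:=H^*(S)$. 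Since $S$ is a \smoothcompact toric surface, $A$ is the cohomology ring of such a surface, a ring whose isomorphism type is a complete invariant of $S$ (toric surfaces being cohomologically rigid). These data — the Betti numbers, the distinguished subring $A$, and the monic fiber relation — are all preserved by a graded-ring isomorphism.

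Now suppose $\phi\colon H^*(M)\xrightarrow{\ \sim\ } H^*(P(E))$ with $M$ a \projectivetoricmanifold. Transporting the structure back, I would set $\tilde x=\phi^{-1}(x)$ and $\tilde A=\phi^{-1}(A)$; then $H^*(M)$ is a free $\tilde A$-module on $1,\tilde x,\dots,\tilde x^{\,n}$ and $\tilde x$ is monic of degree $n+1$ over $\tilde A\cong H^*(S)$. The decisive step is to realize this purely algebraic splitting geometrically on the fan $\Sigma$ of $M$: writing $H^*(M)$ through its Stanley--Reisner and linear-relation presentation, I would match the monic degree-$(n+1)$ relation to a primitive collection of $n+1$ rays of $\Sigma$ — the prospective fiber — and identify the quotient of $N$ by the sublattice these rays span with a rank-$2$ lattice carrying a complete non-singular fan $\Sigma'$. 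One then checks that $\Sigma$ is a $\CP^n$-bundle over $\Sigma'$ in the combinatorial sense, so that the induced toric morphism $M\to S'$ is a projective bundle $P(E')$, and that $H^*(S')\cong \tilde A\cong H^*(S)$, whence $S'$ is a genuine \smoothcompact toric surface by rigidity.

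The hard part will be exactly this passage from the ring isomorphism to a fan-level bundle structure: \projectivetoricmanifolds are \emph{not} cohomologically rigid in general, so one cannot simply assert that $M$ and $P(E)$ share a fan. What makes the argument go through is the very special shape of these rings — the base is two-dimensional, where rigidity does hold, and the monic relation of degree $n+1$ rigidly locates the fiber primitive collection. The most delicate bookkeeping will be recovering the twisting line bundles $L_i'$, i.e.\ matching the sub-leading coefficients of the monic relation (which encode the classes $c_1(L_i)$ inside $A$) with the characteristic lifting data of the fiber rays over $\Sigma'$, and confirming that every configuration consistent with the cohomology ring indeed arises from an honest Whitney sum of complex line bundles, so that the reconstructed $P(E')$ is equivalent to $M$.
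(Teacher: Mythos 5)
Your outline correctly disposes of the trivial direction and correctly locates the difficulty, but the proposal stops exactly where the proof has to begin: the passage from the transported algebraic data $(\tilde A,\tilde x)$ inside $H^\ast(M)$ to an actual bundle structure on $M$ is asserted as a plan ("I would match the monic degree-$(n+1)$ relation to a primitive collection of $n+1$ rays") and never carried out. There are two concrete obstacles you do not address. First, nothing in your argument shows that the fan of $M$ even has the right combinatorics: a graded ring isomorphism need not send ray classes to ray classes, so you cannot read off a primitive collection from $\phi^{-1}(x)$. The paper needs a separate, nontrivial result for this (Theorem~\ref{thm:rigid polytope}): the polytope $\Delta^n\times G(m+2)$ is \emph{combinatorially rigid}, proved via Hochster's formula and bigraded Betti numbers, which pins down the orbit space of $M$ before any characteristic data are discussed. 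Second, even once the combinatorics are fixed, one must show the ring isomorphism forces the characteristic matrix into the block-triangular form of Lemma~\ref{lem:chr_eq_bdl} (i.e.\ that the entries $b,c$ in \eqref{eqn:char. matrix of M} vanish); this is the long matrix computation in Lemma~\ref{thm:quasitoric}, and it is the real content of the converse.

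Moreover, your claim that ``the monic relation of degree $n+1$ rigidly locates the fiber primitive collection'' is false as stated in the boundary cases: for $n=1$ and $\beta_2(S)\le 2$ the paper exhibits quasitoric manifolds (Example~\ref{eg:CP2sharpCP2} and the example preceding it) whose cohomology rings are isomorphic to those of $\CP^1$-bundles but which admit no equivariant bundle structure at all, so the fiber collection is \emph{not} determined by the ring. The paper circumvents this by a different mechanism in those cases: the orbit space is then a product of simplices, and any \toricmanifold over a product of simplices is a generalized Bott manifold, hence already a projective bundle over a toric surface. Finally, your reconstruction would only produce a base that is a $4$-dimensional quasitoric manifold; to conclude it is a \smoothcompact toric surface the paper uses that the zero section $P(\uC\oplus 0)\subset P(E)$ is an intersection of characteristic submanifolds of the toric variety $M$, hence itself a toric surface --- a step absent from your sketch. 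So the proposal is a reasonable road map, but the three load-bearing lemmas (combinatorial rigidity of the polytope, vanishing of $b,c$, and the zero-section argument) are all missing.
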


    Secondly, we are also interested in classifying such manifolds topologically or smoothly. So far, many results on the smooth classification of generalized Bott manifolds have been established. We refer the reader to a survey paper~\cite{ch-ma-su11} on this topic. Remarkably, the results lead us to conjecture that all \toricmanifolds are smoothly classified by their cohomology rings. This problem is now called the \emph{cohomological rigidity problem} for toric varieties.

    On the other hand, for a \projectivetoricmanifold $X$, the algebraic torus action of $(\C^\ast)^n$ induces a locally standard action of the compact subtorus $T^n=(S^1)^n$. Moreover, the orbit space $X/T^n$ can be identified with a simple polytope
%    \footnote{More precisely, the orbit space $X/T^n$ as a simple polytope is so realized in $\R^n$ that all vertices are in the integer lattice of $\R^n$ and all inward vectors of facets form an integral basis for $\Z^n$ at each vertex.}.
    The identification means that there is an orbit map $\rho\colon X\to P$ that maps every $k$-dimensional orbit to a point in the interior of a $k$-dimensional face of $P$ for $k=0,\ldots,n$.

    The topological analogue of a non-singular projective toric variety, called a \emph{quasitoric manifold}, was introduced by Davis and Januszkiewicz \cite{DJ}\footnote{The authors would like to indicate that the notion of quasitoric manifolds originally appeared under the name ``toric manifolds'' in \cite{DJ}. Later, it was renamed in \cite{BP} in order to avoid confusion with \smoothcompact toric varieties. As far as the authors know, there has been a dispute about the terminology. The authors have no preference; however, in this paper, they follow the terminology used in their previous papers.}.
    A $2n$-dimensional closed smooth manifold $M$ is called a quasitoric manifold if it has a locally standard action of an $n$-dimensional compact torus $T^n$, and the orbit space $M/T^n$ can be identified with a simple polytope of dimension $n$.
    Unlike non-singular projective toric varieties, quasitoric manifolds do not necessarily admit almost complex structures. For example, $\CP^2 \# \CP^2$ is not a non-singular complete toric variety, while it is a quasitoric manifold with an appropriate $T^2$-action. Hence, the class of quasitoric manifolds is larger than that of non-singular projective toric varieties.

    We classify projective bundles over $4$-dimensional quasitoric manifolds up to diffeomorphism. Since a \smoothcompact toric surface is a $4$-dimensional quasitoric manifold, our results give a partial affirmative answer to the cohomological rigidity problem for toric varieties. Throughout this paper, $H^\ast(X)$ denotes the integral cohomology ring of a topological space $X$, and $\beta_2 (X)$ denotes the second Betti number of $X$, that is, the rank of $H^2(X)$ over $\Z$.

    \begin{theorem}\label{main2}
        Let $M$ and $M'$ be projective bundles over $4$-dimensional quasitoric manifolds with the fiber space $\CP^1$, respectively. If $H^\ast(M)\cong H^\ast(M')$ as graded rings, then $M$ and $M'$ are diffeomorphic.
    \end{theorem}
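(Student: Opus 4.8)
The plan is to reduce the diffeomorphism classification of these $6$-manifolds to the known classification of their $4$-dimensional bases together with a classification of $\CP^1$-bundles over a fixed base. First I would normalize: since tensoring a rank-$2$ bundle by a line bundle does not change its projectivization, I may assume $M=P(\uC\oplus L)$ and $M'=P(\uC\oplus L')$ for line bundles $L\to B$ and $L'\to B'$ over $4$-dimensional quasitoric manifolds. By the Leray--Hirsch theorem (the projective bundle formula) there is a class $\xi\in H^2(M)$ with
\[
H^\ast(M)\cong H^\ast(B)[\xi]/(\xi^2+\alpha\xi),\qquad \alpha=c_1(L)\in H^2(B),
\]
exhibiting $H^\ast(M)$ as a free $H^\ast(B)$-module with basis $\{1,\xi\}$, and likewise for $M'$ with data $(B',\alpha')$. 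Reading off second Betti numbers, an isomorphism $H^\ast(M)\cong H^\ast(M')$ forces $\beta_2(B)=\beta_2(B')=\beta_2(M)-1$.

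The core of the argument is to recover, from the abstract graded ring $H^\ast(M)$ alone, both the pulled-back subring $H^\ast(B)$ and the class $\alpha$ up to the ambiguities that are invisible to the diffeomorphism type. Here I would isolate $H^\ast(B)\hookrightarrow H^\ast(M)$ by a ring-theoretic characterization adapted to the dimensions involved: a degree-$2$ base class $b$ satisfies $b^3=0$ because $H^6(B)=0$, whereas a generic fiber class does not, since $\xi^3=\alpha^2\xi$. I would then pin down the fiber class $\xi$ up to the substitutions $\xi\mapsto-\xi$ and $\xi\mapsto\xi+\alpha$ that preserve the normalized quadratic relation; a direct computation shows these change the datum only by $\alpha\mapsto-\alpha$, which is precisely the symmetry $P(E)\cong P(E^\ast)$, i.e.\ $L\leftrightarrow L^{-1}$. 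Thus a cohomology isomorphism $H^\ast(M)\cong H^\ast(M')$ should yield a graded ring isomorphism $\psi\colon H^\ast(B)\to H^\ast(B')$ with $\psi(\alpha)=\pm\alpha'$.

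Next I would invoke the cohomological rigidity of $4$-dimensional quasitoric manifolds: each such manifold is simply connected and diffeomorphic to a connected sum of copies of $\CP^2$, $\overline{\CP^2}$, and $S^2\times S^2$, and these standard $4$-manifolds are determined up to diffeomorphism by their intersection form, hence by $H^\ast(\cdot)$. Therefore $\psi$ forces $B\cong B'$; moreover, by a realization theorem of Wall the isometry $\psi$ is induced by a diffeomorphism $f\colon B'\to B$. Pulling back, $f^\ast\alpha=\pm\alpha'$, so $P(\uC\oplus L')\cong P(\uC\oplus f^\ast L^{\pm1})$ and $f$ lifts to a fibrewise diffeomorphism of total spaces, giving $M\cong M'$.

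The step I expect to be the main obstacle is the intrinsic reconstruction in the second paragraph: the $\CP^1$-fibration on $M$ is not unique, so an abstract ring isomorphism need not respect any given fibration, and I must show that the base subring and the fiber class are canonical enough---modulo the explicit sign and translation ambiguities above---that the invariant ``$\alpha$ up to sign and $\mathrm{Aut}(H^\ast(B))$'' is genuinely determined by $H^\ast(M)$. Controlling this, in particular ruling out secondary fibration structures whose base or Euler class would be incompatible, and verifying that the residual ambiguity is exactly the $P(E)\cong P(E^\ast)$ symmetry, is the delicate part; since the $4$-dimensional quasitoric manifolds fall into finitely many combinatorial families, one can fall back on a case-by-case check of the ring structure if a uniform argument is unavailable.
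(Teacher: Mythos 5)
Your strategy---recover the base subring and the first Chern class up to sign from the ring, realize the induced base isomorphism by a diffeomorphism of the $4$-manifold, then pull back the bundle---is essentially the strategy the paper uses for Theorem~\ref{main3} (arbitrary fiber dimension), and it is exactly there that the paper must restrict the base to the class $\mathcal{B}$. This is the first genuine gap in your argument: the ``realization theorem of Wall'' you invoke is \emph{false} for some of the bases allowed in Theorem~\ref{main2}. By Friedman--Morgan~\cite{FM}, if $B=\CP^2\#n\overline{\CP^2}$ or $B=n\CP^2\#\overline{\CP^2}$ with $n>9$, there exist automorphisms of $H^\ast(B)$ (isometries of the intersection form) that are not induced by any self-diffeomorphism; such $B$ are legitimate $4$-dimensional quasitoric manifolds (e.g.\ $\CP^2$ blown up at $n$ torus-fixed points), so your final step breaks down precisely on them. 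The paper's proof of Theorem~\ref{main2} never realizes anything on the base: after establishing that the ring isomorphism carries $H^\ast(B)$ to $H^\ast(B')$ (Lemmas~\ref{prop:induce base isomorphism} and~\ref{lem:CP2sharpCP2}), it shows the isomorphism preserves the Pontryagin class (Lemma~\ref{lem:4-dim'l quasitoric manifolds} and Proposition~\ref{prop:Pontrjagin class preserving}) and the Stiefel--Whitney class (Lemma~\ref{lem:preserve stiefel whitney class}) of the $6$-dimensional total spaces, and then quotes the Wall--Jupp classification of simply connected $6$-manifolds~\cite{Wall},~\cite{Jupp}, for which the cohomology ring together with $w_2$ and $p_1$ determines the diffeomorphism type. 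This is exactly why the $\CP^1$-bundle theorem carries no restriction on the base while the higher-fiber theorem does; your proof, if completed, would only prove Theorem~\ref{main2} for $B,B'\in\mathcal{B}$.

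The second gap is the reconstruction step you flag as delicate, which is where most of the paper's work lies and which your criterion does not settle. The condition $b^3=0$ does not cut out $\pi^\ast H^2(B)$: the cube-zero locus is not a subgroup in general (for the trivial bundle $M=B\times\CP^1$ the fiber class $\xi$ itself satisfies $\xi^3=0$, and for $M=(\CP^1)^3$ the locus is a union of three hyperplanes), so it cannot canonically isolate the base subring. Moreover, base-subring preservation genuinely fails in the small cases $n=1$, $\beta_2(B)\le 2$: the paper's Lemma~\ref{prop:induce base isomorphism} requires $n\geq 2$ or $\beta_2(B)\geq 3$, a separate computation (Lemma~\ref{lem:CP2sharpCP2}) is needed for $B=\CP^2\#\CP^2$, and for $\CP^2$ and Hirzebruch-surface bases (two-stage generalized Bott and three-stage Bott manifolds) the paper does not prove subring preservation at all but instead invokes the classification of~\cite{ch-ma-su10}. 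Even where preservation does hold, the paper obtains it not by an internal ring-theoretic characterization but via the combinatorial rigidity of $\Delta^n\times G(m+2)$ (Section~\ref{sec:polytope rigidity}) combined with an analysis of characteristic matrices (Lemma~\ref{thm:quasitoric}); your proposed ``case-by-case fallback'' would essentially have to reproduce that machinery.
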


    Let $\mathcal{B}$ be the set of $4$-dimensional quasitoric manifolds which cannot be expressed as $\CP^2\#n\overline{\CP^2}$ or $n\CP^2\#\overline{\CP^2}$ for $n>9$.

    \begin{theorem}\label{main3}
        Let $B,B' \in \mathcal{B}$, and let $M$ and $M'$ be projective bundles over $B$ and $B'$, respectively. Then, any graded ring isomorphism from $H^\ast(M)$ to $H^\ast(M')$ is induced by a diffeomorphism from $M'$ to $M$.
    \end{theorem}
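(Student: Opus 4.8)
The plan is to show that the diffeomorphism type of such a projective bundle is governed by a very small amount of Chern data, and then to reduce the assertion to the strong cohomological rigidity of the base. Write $M=P(E)$ and $M'=P(E')$ with $E=\bigoplus_{i=0}^{n}L_i\to B$ and $E'=\bigoplus_{j=0}^{n'}L'_j\to B'$ Whitney sums of line bundles over $B,B'\in\mathcal B$; the case of a $0$-dimensional fiber is trivial, so assume $n\ge 1$. By the Leray--Hirsch theorem $H^\ast(M)$ is a free $H^\ast(B)$-module on $1,x,\dots,x^n$ with the single Grothendieck relation $\prod_{i=0}^{n}(x-a_i)=0$, where $a_i=c_1(L_i)\in H^2(B)$. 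Since $B$ is a $4$-dimensional quasitoric manifold, $H^{\ge 6}(B)=0$, so $e_k(a_0,\dots,a_n)=0$ for $k\ge 3$ and the relation collapses to
\[
   H^\ast(M)\;\cong\;H^\ast(B)[x]\big/\bigl(x^{n+1}-e_1x^{n}+e_2x^{n-1}\bigr),
\]
with $e_1=\sum_i a_i\in H^2(B)$ and $e_2=\sum_{i<j}a_ia_j\in H^4(B)$. Thus $H^\ast(M)$ depends only on $H^\ast(B)$, on $n$, and on the pair $(e_1,e_2)=(c_1(E),c_2(E))$.

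Crucially, the same data determine $M$ smoothly. As a quasitoric manifold $B$ is simply connected with torsion-free cohomology concentrated in even degrees, so line bundles over $B$ are classified by $c_1$, and a complex bundle of rank $n+1\ge 2$ over the $4$-complex $B$ lies in the stable range; hence it is determined up to isomorphism by its class in $\widetilde K(B)$, equivalently by its total Chern class $1+c_1+c_2$. Therefore two Whitney sums of line bundles over $B$ with the same $(c_1,c_2)$ are isomorphic as bundles, and $P(E)$ is determined up to fiber-preserving diffeomorphism by $(B,n,e_1,e_2)$; the only remaining freedom is the twist $E\mapsto E\otimes L$, which replaces $x$ by $x+c_1(L)$ and acts on $(e_1,e_2)$ in a prescribed way.

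Now let $\varphi\colon H^\ast(M)\to H^\ast(M')$ be a graded ring isomorphism. Comparing top nonzero degrees and the rank in degree $2$ gives $n'=n$ and $\beta_2(B')=\beta_2(B)$. The technical heart is to recover the base functorially, that is, to show $\varphi\bigl(H^\ast(B)\bigr)=H^\ast(B')$. For $n\ge 3$ this is transparent: every class of $H^2(B)$ cubes to zero because $H^{\ge6}(B)=0$, while a class $y=cx+b$ with $c\ne 0$ has $y^3$ with nonzero $x^3$-coefficient $c^3$, so $H^2(B)=\{\,y\in H^2(M):y^3=0\,\}$ is intrinsic. For $n\le 2$ this description can fail for special $(e_1,e_2)$, and one instead isolates $H^2(B)$ using the nilpotency of $H^{\ge 2}(B)$ together with the rank constraint $\rank H^2(B)=\beta_2(M)-1$ (and, for $n=1$, the $\CP^1$-bundle analysis underlying Theorem~\ref{main2}). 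In every case $\varphi$ then induces a graded ring isomorphism $\bar\varphi\colon H^\ast(B)\to H^\ast(B')$. Because $B,B'\in\mathcal B$, the strong cohomological rigidity of these bases---precisely the property that fails for $\CP^2\#n\overline{\CP^2}$ and $n\CP^2\#\overline{\CP^2}$ with $n>9$ and is the reason for excluding them---produces a diffeomorphism $g\colon B'\to B$ with $g^\ast=\bar\varphi$.

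It remains to lift $g$ to a diffeomorphism inducing $\varphi$. Writing $\varphi(x)=\varepsilon x'+t$ with $\varepsilon=\pm1$ and $t\in H^2(B')$, compatibility of $\varphi$ with the two Grothendieck relations forces $\bar\varphi(e_1)$ and $\bar\varphi(e_2)$ to agree with $e_1',e_2'$ modulo the twist; by the stable-range classification both $g^\ast E$ and $E'$ then have matching Chern classes, so $E'\cong g^\ast E\otimes L$ for some line bundle $L$. Consequently $P(E')\cong P(g^\ast E)=g^\ast P(E)$, and the canonical bundle map covering $g$ gives a fiber-preserving diffeomorphism $F\colon M'\to M$; when $\varepsilon=-1$ one first applies fiberwise complex conjugation $P(\,\cdot\,)\cong P(\overline{\,\cdot\,})$, a diffeomorphism interchanging $x$ and $-x$. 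A final adjustment of $F$ by a fiberwise $PGL_{n+1}(\C)$-automorphism (a self-diffeomorphism of $M$ over $\mathrm{id}_B$) makes $F^\ast$ agree with $\varphi$ on the generators $x$ and $H^2(B)$, hence $F^\ast=\varphi$. I expect the functorial recovery of the base, and the essential appeal to strong rigidity for $\mathcal B$, to be the main obstacle; once $g$ is in hand, the stable-range classification of bundles over a $4$-complex renders the construction of $F$ and the verification $F^\ast=\varphi$ essentially formal.
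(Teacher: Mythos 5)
Your overall architecture --- recover an isomorphism of the bases, realize it by a diffeomorphism using Wall's strong rigidity for $B\in\mathcal{B}$, then match the bundles via the stable-range classification of complex vector bundles over a $4$-complex by total Chern class together with a twist/conjugation --- is exactly the paper's (its Theorem~\ref{thm:Wall64}, Lemma~\ref{lem:relationshp betweewn total chern and bundle} and Proposition~\ref{prop:SCR}), and for $n\ge 3$ your cube criterion $H^2(B)=\{y\in H^2(M)\,:\,y^3=0\}$ is a genuinely cleaner route to the base isomorphism than the paper's Lemma~\ref{prop:induce base isomorphism}. The gap is in the cases $n\le 2$, and it is not a removable technicality. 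For $n=1$ your claim that ``in every case $\varphi$ then induces a graded ring isomorphism $\bar\varphi\colon H^\ast(B)\to H^\ast(B')$'' is false: take $M=M'=P(\uC\oplus\uC)=\CP^1\times\CP^1\times\CP^1$ over $B=B'=\CP^1\times\CP^1\in\mathcal{B}$; the automorphism of $\Z[x,y,z]/(x^2,y^2,z^2)$ permuting the fiber class $x$ with the base class $z$ does not preserve the subring $H^\ast(B)$ (the theorem still holds there, since the permutation is induced by a diffeomorphism, but your proof scheme cannot produce it). This is precisely why the paper does not attempt to recover the base in the Bott-type cases: when $n=1$ and $m\le 2$, or $m=1$, it instead invokes strong cohomological rigidity of $2$-stage generalized Bott manifolds \cite{CP12}, of Bott manifolds \cite{CS11, Choi13}, and a separate explicit computation (Lemma~\ref{lem:CP2sharpCP2}) for $\CP^1$-bundles over $\CP^2\#\CP^2$. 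Your parenthetical appeal to ``the analysis underlying Theorem~\ref{main2}'' cannot fill this hole, because that analysis (Lemma~\ref{prop:induce base isomorphism}) excludes exactly these cases.

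For $n=2$ the situation is that subring preservation is true but your proposed substitute --- ``nilpotency of $H^{\ge 2}(B)$ together with the rank constraint'' --- is not an argument. The cube criterion really does fail (already for $M=B\times\CP^2$, where $x^3=0$, or for $E=\uC\oplus L\oplus L^{-1}$ with $c_1(L)^2=0$), and in such cases $H^2(B)$ is not singled out inside $H^2(M)$ by any nilpotency or rank condition alone. The paper's proof for $n\ge 2$ (Lemma~\ref{prop:induce base isomorphism}, resting on Case 1 of Lemma~\ref{thm:quasitoric}) uses the quasitoric presentation of $H^\ast(M)$ --- the quadratic Stanley--Reisner relations of the polygon and the invertibility of the transition matrix $\mathbf{P}$ --- and is not purely formal ring theory. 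So as written your proof is complete only for $n\ge 3$; for $n=2$ the key step is asserted rather than proved, and for $n=1$ the asserted step is false and a different mechanism (strong rigidity of Bott-type manifolds) is required.
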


    The remainder of this paper is organized as follows. In Section~\ref{sec:projective bundles over a smoothcompact toric surface}, we investigate a projective bundle over a \smoothcompact toric surface. In Section~\ref{sec:quasitoric manifolds}, we recall the definitions and properties of quasitoric manifolds, and we discuss a projective bundle as a quasitoric manifold. In Section~\ref{sec:polytope rigidity}, we show that  the product of a simplex and a polygon is a combinatorially rigid polytope (the definition is given below). This implies that if a quasitoric manifold has the cohomology ring isomorphic to that of a projective bundle over a $4$-dimensional quasitoric manifold, then their orbit spaces are combinatorially equivalent.
    In Section~\ref{sec:cohomology determines}, we prove Theorem~\ref{main1}.
    In the last two sections, we prove Theorems~\ref{main2} and \ref{main3}.

\section{Cohomology of projective bundles over toric surfaces}\label{sec:projective bundles over a smoothcompact toric surface}

    Let $B$ be a smooth manifold, and let $E$ be a complex vector bundle over $B$ with the fiber space $V$. We define the projectivization $P(E)$ of $E$ by taking the projectivization of each fiber of $E$. Then, $P(E)$ is a fiber bundle over $B$ with the fiber space $P(V)$.

    Let $x$ be the negative of the first Chern class of the tautological line bundle over $P(E)$. Then, $H^\ast(P(E))$ can be regarded as an algebra over $H^\ast(B)$ via $\pi^\ast\colon H^\ast(B)\to H^\ast(P(E))$, where $\pi\colon P(E)\to B$ denotes the projection. When $H^\ast(B)$ is finitely generated and torsion free, $\pi^\ast$ is injective, and $H^\ast(P(E))$ as an algebra over $H^\ast(B)$ is known to be described as
    \begin{equation*}
        H^\ast(P(E))=H^\ast(B)[x] \left/\left(\sum_{k=0}^n c_k(E)x^{n-k}\right)\right.,
    \end{equation*}
    where $n$ is the complex dimension of $V$ and $c_k (E)$ is the $k$-th Chern class of $E$ (see~\cite{BH}).

    \begin{lemma}\cite{ch-ma-su08}\label{lem:isomorphic projective bundles}
        Let $B$ and $E$ be as above, and let $L$ be a complex line bundle over $B$. Let $E^\ast$ denote the complex vector bundle dual to $E$. Then, $P(E\otimes L)$, $P(E)$, and $P(E^\ast)$ are isomorphic as bundles over $B$; in particular, they are diffeomorphic.
    \end{lemma}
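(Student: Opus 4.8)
The plan is to verify the two asserted isomorphisms one at a time, constructing each of them fiberwise and then checking that the construction is independent of the auxiliary choices involved, so that the fiberwise maps assemble into a genuine isomorphism of bundles over $B$. Throughout, I regard a point of $P(E)$ over $b\in B$ as a complex line $\C v\subset E_b$, and likewise for $P(E\otimes L)$ and $P(E^\ast)$.

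For $P(E\otimes L)\cong P(E)$, the idea is that projectivization does not see tensoring by a one-dimensional space. Fix $b\in B$ and a nonzero vector $s\in L_b$; then $v\mapsto v\otimes s$ is a linear isomorphism $E_b\to E_b\otimes L_b$ carrying the line $\C v$ to the line $\C(v\otimes s)$. Replacing $s$ by $cs$ with $c\in\C^\ast$ multiplies $v\otimes s$ by $c$ and hence leaves $\C(v\otimes s)$ unchanged, so the rule $\C v\mapsto \C(v\otimes s)$ is well defined independently of $s$. On each fiber it is induced by the linear isomorphism $v\mapsto v\otimes s$, hence is bijective; it is plainly smooth and covers the identity on $B$, so it is an isomorphism of bundles $P(E)\to P(E\otimes L)$.

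For $P(E^\ast)\cong P(E)$, I would equip $E$ with a Hermitian metric $h$, which exists because $B$ is a smooth manifold and local Hermitian metrics can be patched by a partition of unity. On each fiber, $h$ supplies the conjugate-linear isomorphism $E_b\to E_b^\ast$, $v\mapsto h_b(v,\cdot\,)$. The point to watch is that this map is only conjugate-linear; however, a conjugate-linear isomorphism still permutes the complex lines, since $h_b(cv,\cdot\,)=\bar c\,h_b(v,\cdot\,)$ shows that $\C v$ is sent to $\C\,h_b(v,\cdot\,)$. Hence it descends to a fiberwise bijection $P(E)\to P(E^\ast)$. Writing $P(E)=(E\setminus 0)/\C^\ast$, this bijection is the quotient of the real-smooth map induced by $h$, which is equivariant for the $\C^\ast$-actions through complex conjugation on $\C^\ast$; it is therefore smooth with smooth inverse and covers the identity on $B$, so it is an isomorphism of bundles. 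Combining the two constructions yields the statement, and an isomorphism of bundles over $B$ is in particular a diffeomorphism.

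The step I expect to demand the most care is the second one, precisely because the metric isomorphism $E\cong E^\ast$ is conjugate-linear rather than linear; the resolution is the observation that passing to projective bundles forgets the distinction between linear and conjugate-linear automorphisms, as both act by permuting complex lines. If one instead wanted a holomorphic isomorphism the argument would have to change, but the smooth statement, which is all that is needed for the diffeomorphism conclusion, follows as above.
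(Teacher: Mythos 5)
Your proof is correct and is essentially the standard argument behind this lemma, which the paper itself does not prove but imports from \cite{ch-ma-su08}: tensoring by a line bundle acts trivially on projectivizations, and a Hermitian metric identifies $E^\ast$ conjugate-linearly with $E$, permuting complex lines; indeed the very same Hermitian-metric device is used later in the paper, in the proof of Proposition~\ref{prop:SCR}. The one point worth noting is that your map $P(E)\to P(E^\ast)$ is conjugate-linear (anti-holomorphic) on fibers, so the resulting isomorphism is one of smooth fiber bundles (a fiber-preserving diffeomorphism covering the identity) rather than of $PGL_{n+1}(\C)$-bundles --- this is unavoidable, since $P(E)$ and $P(E^\ast)$ need not be isomorphic as algebraic bundles, and the smooth sense is exactly what the lemma asserts and what the diffeomorphism conclusion requires.
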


    Let $S$ be a \smoothcompact toric surface. Then, $S$ is obtained by blow-ups from  either $\CP^2$ or one of the Hirzebruch surfaces (see~\cite{F}). In particular, $S$ is diffeomorphic to $\CP^2$, $\CP^1\times\CP^1$, or the connected sum of $\CP^2$ with a finite number of copies of $\overline{\CP^2}$, where $\overline{\CP^2}$ denotes $\CP^2$ with reversed orientation (see~\cite{FY}). Consequently, $H^\ast(S)$ is finitely generated as a ring by the second cohomology classes, and it is torsion free. Hence,
    we may assume that $H^\ast(S)$ is generated by $x_1,\ldots,x_m$ of degree $2$, where $\beta_2(S) = m$.

    Now, let $E$ be the Whitney sum of $n+1$ complex line bundles over $S$. Then, $P(E)$ is a projective bundle over $S$ with the fiber space $\CP^n$. By Lemma~\ref{lem:isomorphic projective bundles}, we may assume that $$P(E)=P(\uC\oplus L_1\oplus\cdots\oplus L_n),$$ where $\uC$ is the trivial complex line bundle and $L_i$'s are complex line bundles over $S$. Then, $H^\ast(P(E))$ is a free module over $H^\ast(S)$ with basis $\{1,x,\ldots,x^n\}$, and the ring structure is determined by the single relation
    $$x^{n+1}+c_1(E)x^n+\cdots+c_n(E)x=0,$$
    where $x$ is the negative of the first Chern class of the tautological line bundle over $P(E)$ and $c(E)=\sum_{k \geq 0} c_k (E)=\prod_{i=1}^nc(L_i)$. Note that the first Chern class is a complete invariant for classifying complex line bundles smoothly.
    Put $c_1(L_i):=\sum_{j=1}^m a_{ij}x_j$ for each $i=1,\ldots,n$. Then $$c(E)=\prod_{i=1}^n\left(1+\sum_{j=1}^ma_{ij}x_j\right).$$
    Therefore, the cohomology ring of $P(E)$ is written as follows:
    \begin{equation*}
        H^\ast(P(E)) = H^\ast(S)[x] \left/ x \prod_{i=1}^n \left(x+ \sum_{j=1}^m a_{ij}x_j \right) \right..
    \end{equation*}

    \begin{remark}
        If a complex vector bundle $E$ over a \toricmanifold is not isomorphic to a Whitney sum of complex line bundles, then the projectivization $P(E)$ is not necessary to be a toric variety.
    \end{remark}

    We call $P(E)$ a \emph{projective bundle over} a smooth manifold $B$ only when $E$ is the Whitney sum of complex line bundles over $B$. In particular, if $P(E)$ is a projective bundle over a \smoothcompact toric surface $S$, then it is also a non-singular projective toric variety, as described in the introduction.

    \begin{example}
        Let $E_1:=\uC\oplus\bigoplus_{i=1}^n L_i$ and $E_2:=\uC\oplus\bigoplus_{i=1}^n L_i'$ be complex vector bundles over $S:=\CP^2$, where $L_i$'s and $L_i'$'s are complex line bundles over $S$. It is shown in~\cite{ch-ma-su10} that
        $$P(E_1)\approx P(E_2)\mbox{ if and only if } H^\ast(P(E_1))\cong H^\ast(P(E_2)),$$ where $\approx$ denotes ``diffeomorphic'' and $\cong$ denotes ``isomorphic as rings''.
    \end{example}
    This example shows that the cohomology ring determines the smooth type of a projective bundle $P(E)$ over $\CP^2$. Hence, we may ask the following question.

    \begin{problem}\label{prob:rigidity for projective bundles}
        Are projective bundles $P(E_1)$ and $P(E_2)$ over a \smoothcompact toric surface $S$ diffeomorphic or homeomorphic if $H^\ast(P(E_1))\cong H^\ast(P(E_2))$ as rings?
    \end{problem}

    We investigate this problem further in Sections~\ref{sec:cohomological rigidity and strong cohomological rigidity} and \ref{section:higher_projective_bdl}, where partial affirmative answers are given.

\section{Quasitoric manifolds}\label{sec:quasitoric manifolds}

    Since a projective bundle $P(E)$ over a \smoothcompact toric surface is a non-singular projective toric variety, it is also a quasitoric manifold. In this section, we recall general facts about quasitoric manifolds, and we concern $P(E)$ from a different point of view, that is, as a quasitoric manifold.

    Let $M$ be a $2n$-dimensional quasitoric manifold with an orbit map $\rho\colon M \to P$. Then,
    for a codimension-$k$ face $F$ of $P$, the preimage $\rho^{-1}(F)$ is a connected codimension-$2k$ submanifold of $M$, which is fixed pointwise by a $k$-dimensional subgroup of $T^n$. Let $\mathcal{F}(P) = \{F_1, \cdots , F_d\}$ be the set of facets, codimension-one faces, of $P$. Then, there is a primitive vector $\lambda_i$ in the integer lattice $\Z^n = \Hom(S^1, T^n)$ of one-parameter circle subgroups in $T^n$ such that $\lambda_i$ spans the circle subgroup $T_{F_i}\subset T^n$ fixing the \emph{characteristic submanifold} $\rho^{-1}(F_i)$. Hence, the vector $\lambda_i$ is determined up to sign.
    Then, the function $\boldsymbol{\lambda}\colon F_i \mapsto \lambda_i$ is called the \emph{characteristic function} of $M$, and it
    satisfies the \emph{non-singularity condition}:
    \begin{equation}\label{non-singularity}
    \begin{array}{l}
    \blambda(F_{i_1}), \ldots, \blambda(F_{i_{\alpha}}) \mbox{ form a part of an integral basis of } \Z^n\\
    \mbox{whenever the intersection } F_{i_1} \cap \cdots \cap F_{i_{\alpha}} \mbox{ is non-empty.}
    \end{array}
    \end{equation}

    Let $P$ be a simple polytope of dimension $n$ and let $\mathcal{F}(P)$ be the set of facets of $P$. For a function $\blambda \colon \mathcal{F}(P) \to \Z^n$ satisfying the non-singularity condition~\eqref{non-singularity}, let $T_F$ denote the subgroup of $T^n$ represented by the unimodular subspace of $\Z^n$ spanned by $\blambda(F_{i_1}),\ldots,\blambda(F_{i_\alpha})$, where $F=F_{i_1}\cap\cdots\cap F_{i_\alpha}$. Given a pair $(P,\blambda)$, we can construct a manifold
    \begin{equation}\label{eqn:construction of a quasitoric manifold}
        M(P,\blambda) := T^n \times P /\sim,~(t,p) \sim (s,q)\Leftrightarrow \mbox{$p=q$ and $t^{-1}s \in T_{F(p)}$},
    \end{equation}
    where $F(p)$ is the face of $P$ that contains a point $p\in P$ in its relative interior. Then, the standard $T^n$-action on $T^n$ descends to a locally standard action of $T^n$ on $M(P,\blambda)$ whose orbit space is combinatorially equivalent to $P$. Hence, $M(P,\blambda)$ is indeed a quasitoric manifold with the characteristic function $\blambda$.

    It is shown in \cite{DJ} that for a quasitoric manifold $M$ over $P$ with its characteristic function $\blambda$, there is an equivariant homeomorphism $M\to M(P,\blambda)$ covering the identity on $P$. Thus, any quasitoric manifold is expressed by a pair of a simple polytope $P$ and a function $\blambda\colon\mathcal{F}(P)\to\Z^n$ satisfying the non-singularity condition \eqref{non-singularity}.

    Note that one may assign an $n \times d$ matrix $\Lambda$, called a \emph{characteristic matrix}, to a characteristic function $\blambda$ by $$ \Lambda = ( \blambda(F_1) \cdots \blambda(F_d) ) = (A|B),$$ where $A$ is an $n \times n$ matrix and $B$ is an $n \times (d-n)$ matrix.
    From the non-singularity condition \eqref{non-singularity}, if we assume that $F_1\cap\cdots\cap F_n\neq\emptyset$, $A$ is invertible.
    Hence, in this paper, for simplicity, we assume that the first $n$ columns of $\Lambda$ form an invertible matrix $A$, and we sometimes denote $M(P, \blambda)$ by $M(P, \Lambda)$ as long as there is no confusion.

    We say that two quasitoric manifolds $M_1$ and $M_2$ over the same polytope $P$ are \emph{equivalent} if there is a $\theta$-equivariant homeomorphism $f\colon M_1 \rightarrow M_2$ that covers the identity on $P$, where $\theta$ is an automorphism on $T^n$.
    Here, ``$\theta$-equivariant homeomorphism $f$" means that the homeomorphism $f$ satisfies $f(t\cdot x)=\theta(t)\cdot f(x)$ for all $t\in T^n$ and $x \in M$.

    Assume that $M_1 = M(P, \blambda_1)$ and $M_2 = M(P, \blambda_2)$. If there is a general linear group $\sigma \in GL_n(\Z)$ such that $\blambda_1 = \sigma \circ \blambda_2$, then $M_1$ and $M_2$ are equivalent. Hence, for each quasitoric manifold $M(P, \blambda)$, the corresponding matrix $\Lambda$ can be represented by $(E_n | A^{-1}B)$, where $E_n$ is the identity matrix of order $n$.

     Let $P_i$ be an $n_i$-dimensional simple polytope with $\mathcal{F}(P_i)=\{F_{i,1},\ldots,F_{i,d_i}\}$ for $i=1,2$. Then, the Cartesian product of two simple polytopes $P=P_1\times P_2$ is an $(n_1+n_2)$-dimensional simple polytope with $(d_1+d_2)$ facets.
     Note that each facet of $P$ is of the form either $F_{1,j}\times P_2$ or $P_1\times F_{2,j}$. For convenience, we shall give an order on $\cF(P)$ by
     $$
        \cF(P)=\{F^1_1, \ldots, F^1_{n_1}, F^2_{1}, \ldots, F^2_{n_2}, F^1_{n_1+1},\ldots,F^1_{d_1}, F^2_{n_2+1},\ldots, F^2_{d_2}\},
     $$ where $F^1_{j}=F_{1,j}\times P_2$ and $F^2_{j}=P_1\times F_{2,j}$. Now, let $M$ be a quasitoric manifold over the polytope $P$, and set $n=n_1 + n_2$. Then, we obtain a characteristic function $\blambda : \mathcal{F}(P) \rightarrow \Z^n$. Up to equivalence, we may assume that the characteristic matrix $\Lambda$ associated with $\blambda$ is of the form
    \begin{equation}\label{char}
        \Lambda=\left(
            \begin{array}{cccc}
                E_{n_1} &    O     & A_{11} & A_{12} \\
                O& E_{n_2} &A_{21} & A_{22} \\
            \end{array}
            \right),
    \end{equation}
    where $A_{ij}$ is an $n_i \times (d_j -n_j)$ matrix.
    \begin{lemma}\label{block_char}
        A sub-matrix $\Lambda_i := (E_{n_i}, A_{ii})$ of $\Lambda$ is a characteristic matrix on $P_i$ for $i=1,2$.
    \end{lemma}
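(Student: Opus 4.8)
The plan is to verify the non-singularity condition \eqref{non-singularity} for $\Lambda_1=(E_{n_1},A_{11})$ directly and to deduce it from the non-singularity of $\Lambda$, which we already have; the statement for $\Lambda_2$ will follow by the symmetric argument. First I would invoke the standard reduction to vertices: since $P_1$ is a simple polytope of dimension $n_1$, every nonempty intersection of facets is contained in a vertex, and a subset of an integral basis is again part of an integral basis. Hence it suffices to check that for each vertex $v_1=F_{1,j_1}\cap\cdots\cap F_{1,j_{n_1}}$ of $P_1$, the $n_1$ columns of $\Lambda_1$ indexed by $j_1,\ldots,j_{n_1}$ form an integral basis of $\Z^{n_1}$, i.e.\ the $n_1\times n_1$ submatrix $B_1$ they span is unimodular.

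Next I would exploit the normalization in \eqref{char}. The assumption that the first $n=n_1+n_2$ columns of $\Lambda$ form the identity block means that $F^1_1,\ldots,F^1_{n_1},F^2_1,\ldots,F^2_{n_2}$ meet at a vertex of $P=P_1\times P_2$; since this vertex factors as $(F_{1,1}\cap\cdots\cap F_{1,n_1})\times(F_{2,1}\cap\cdots\cap F_{2,n_2})$, the point $v_2:=F_{2,1}\cap\cdots\cap F_{2,n_2}$ is a genuine vertex of $P_2$. Then $v_1\times v_2$ is a vertex of $P$, namely the intersection of $F^1_{j_1},\ldots,F^1_{j_{n_1}},F^2_1,\ldots,F^2_{n_2}$, so by the non-singularity of $\Lambda$ the $n$ columns of $\Lambda$ indexed by these facets form an integral basis of $\Z^n$; equivalently, the $n\times n$ matrix $C$ of these columns satisfies $\det C=\pm 1$.

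The key computation is then to read off the block structure of $C$. The columns for $F^2_1,\ldots,F^2_{n_2}$ are exactly $\left(\begin{smallmatrix}O\\ E_{n_2}\end{smallmatrix}\right)$, while the columns for $F^1_{j_1},\ldots,F^1_{j_{n_1}}$ have top block equal to the corresponding columns $B_1$ of $\Lambda_1$. Arranging $C$ with the $F^1$-columns first makes it block lower-triangular,
\[
    C=\begin{pmatrix} B_1 & O \\ \ast & E_{n_2}\end{pmatrix},
\]
so that $\det C=\det B_1$. Hence $\det B_1=\pm 1$, which is precisely the assertion that the chosen $n_1$ columns of $\Lambda_1$ form an integral basis of $\Z^{n_1}$. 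As $v_1$ was an arbitrary vertex of $P_1$, the non-singularity condition for $\Lambda_1$ holds. The matrix $\Lambda_2$ is handled identically, augmenting a vertex of $P_2$ by the facets $F^1_1,\ldots,F^1_{n_1}$ (whose columns are $\left(\begin{smallmatrix}E_{n_1}\\ O\end{smallmatrix}\right)$); the analogous matrix is then block upper-triangular with $E_{n_1}$ in the top-left corner, giving $\det=\pm\det B_2$.

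I expect no serious obstacle here: the only points requiring care are the justification of the vertex-only check and the setup of the block-triangular factorization. The real content is the simple observation that the auxiliary facets $F^2_1,\ldots,F^2_{n_2}$ contribute exactly the standard basis of the $\Z^{n_2}$-summand, thanks to the identity block in \eqref{char}, so that the off-diagonal block $O$ sits above $E_{n_2}$ and the determinant factors through $\det B_1$. Everything else is routine, and none of the cohomological machinery of the preceding section is needed.
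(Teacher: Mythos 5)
Your proof is correct and follows essentially the same route as the paper: both augment an arbitrary vertex $v_1 = F_{1,i_1}\cap\cdots\cap F_{1,i_{n_1}}$ of $P_1$ by the facets $F^2_1,\ldots,F^2_{n_2}$ to obtain a vertex of $P_1\times P_2$, and then use the non-singularity of $\Lambda$ together with the block structure of \eqref{char} to conclude that the determinant of the chosen columns of $\Lambda_1$ is $\pm 1$. You merely make explicit two steps the paper leaves implicit, namely the reduction to vertices and the block-triangular factorization $\det C = \det B_1$.
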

    \begin{proof}
        Note that for any vertex $v = F_{1,i_1} \cap \cdots \cap F_{1, i_{n_1}}$ of $P_1$, $$v\times(F_{2,1}\cap \cdots \cap F_{2,n_2}) = F^1_{i_1} \cap \cdots \cap F^1_{i_{n_1}} \cap F^2_1 \cap \cdots \cap F^2_{n_2}$$ is also a vertex of $P$. Define $\blambda_1 \colon \cF(P_1) \to \Z^{n_1}$ by mapping $\blambda_1(F_{1,j})$ to the $j$-th column vector of $\Lambda_1$. Then,
        \begin{equation*}
            \begin{split}
                &\det(\blambda_1(F_{1,i_1}) , \cdots ,\blambda_1(F_{1,i_n}))\\
                &\quad = \det(\blambda(F^1_{i_1}) ,\cdots, \blambda(F^1_{i_n}), \blambda(F^2_1), \cdots, \blambda(F^2_{n_2}))\\
                &\quad  = \pm 1.
            \end{split}
        \end{equation*}
         Therefore, $\blambda_1$ satisfies the non-singularity condition on $P_1$, and hence, $\Lambda_1$ is a characteristic matrix on $P_1$. A similar argument shows that $\Lambda_2$ is also a characteristic matrix on $P_2$.
    \end{proof}
    \begin{definition}\label{def:equiv. bundle}
        Let $M$, $F$, and $B$ be quasitoric manifolds of dimensions $2n_1+2n_2$, $2n_1$, and $2n_2$, respectively. A bundle $\pi:M \rightarrow B$ with fiber $F$ is said to be \emph{equivariant} if there is a surjective homomorphism $\theta:T^{n_1+n_2} \rightarrow T^{n_2}$ such that $\pi(t\cdot x)=\theta(t)\cdot \pi(x)$ for all $t\in T^{n_1+n_2}$ and $x \in M$, the fiber $\pi^{-1}(b)$ has a locally standard action of $\ker \theta$ for each $b \in B$, and $\pi^{-1}(b)$ is equivalent to $F$.
    \end{definition}
    If $M\to B$ is an equivariant bundle with fiber $F$, then the orbit space of $M$ is the product of the orbit space of $B$ and the orbit space of $F$ by Proposition~5 in~\cite{Do}. Furthermore, we have the following lemma, which is an immediate corollary of Theorem~6 in \cite{Do}.
    \begin{lemma} \label{lem:chr_eq_bdl}
        A quasitoric manifold $M$ over $P_1 \times P_2$ is an equivariant bundle with fiber $M(P_1, \Lambda_1)$ and base $M(P_2, \Lambda_2)$ if and only if it is equivalent to $M(P_1 \times P_2, \Lambda)$, where $\Lambda$ is the characteristic matrix of the form
        \begin{equation*}
            \left(
            \begin{array}{cccc}
                E_{n_1} & O       & A_{11} & A_{12} \\
                O       & E_{n_2} & O & A_{22}\\
            \end{array}
            \right)
        \end{equation*}
        and $\Lambda_i \; :=\; (E_{n_i},A_{ii})$ for $i=1,2$.
    \end{lemma}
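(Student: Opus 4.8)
The plan is to prove both implications directly from the construction~\eqref{eqn:construction of a quasitoric manifold}, reading off the bundle criterion as the content of Theorem~6 of~\cite{Do}. Throughout I fix the projection $\theta\colon T^{n_1+n_2}\to T^{n_2}$ onto the last $n_2$ coordinates, so that the induced lattice map $\theta_\ast\colon\Z^{n_1+n_2}\to\Z^{n_2}$ has kernel $\Z^{n_1}\oplus 0$.

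For the ``if'' direction I assume $\Lambda$ has the stated block form with vanishing lower-left block and define $\pi\colon M(P_1\times P_2,\Lambda)\to M(P_2,\Lambda_2)$ by $[t,(p_1,p_2)]\mapsto[\theta(t),p_2]$. The only point requiring care is well-definedness: if $t^{-1}s\in T_{F(p_1,p_2)}$, I must show $\theta(t^{-1}s)\in T_{F_2(p_2)}$. Since $F(p_1,p_2)=F_1(p_1)\times F_2(p_2)$, the group $T_{F(p_1,p_2)}$ is spanned by the columns $\blambda(F^1_j)$ and $\blambda(F^2_k)$ over the facets through $F_1(p_1)$ and $F_2(p_2)$; applying $\theta_\ast$ annihilates the first block, and the hypothesis $A_{21}=O$ gives $\theta_\ast(\blambda(F^1_j))=0$, so $\theta_\ast(T_{F(p_1,p_2)})$ is spanned exactly by the base vectors, i.e. it equals $T_{F_2(p_2)}$. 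Equivariance is immediate from the formula. To identify the fiber I take an interior point $[e,q]$ (so $F_2(q)=P_2$); its preimage consists of the classes $[(t_1,e),(p_1,q)]$, and the remaining identifications reduce to those of $M(P_1,\Lambda_1)$ because $T_{F_1(p_1)\times P_2}=T_{F_1(p_1)}\subset\ker\theta$ restricts on the first block to the subgroup determined by $\Lambda_1$. Local triviality over the standard charts of $P_2$ then realizes $\pi$ as an equivariant bundle; this last step is precisely what Theorem~6 of~\cite{Do} provides.

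For the ``only if'' direction, suppose $M$ carries an equivariant bundle structure with the data of Definition~\ref{def:equiv. bundle}. By Proposition~5 of~\cite{Do} its orbit space is $P_1\times P_2$, so $M$ is equivalent to some $M(P_1\times P_2,\Lambda)$ with $\Lambda$ of the form~\eqref{char}. The crucial observation is that every vertical characteristic vector lies in $\ker\theta$: the characteristic submanifold $\rho^{-1}(F^1_j)$ projects onto the whole base $B$ because $F^1_j=F_{1,j}\times P_2$ projects onto $P_2$, and since $T_{F^1_j}$ fixes $\rho^{-1}(F^1_j)$ pointwise while $\pi$ is $\theta$-equivariant, $\theta(T_{F^1_j})$ fixes all of $B$ and hence is trivial by effectiveness of the base action. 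On lattices this says $\theta_\ast(\blambda(F^1_j))=0$, that is, the lower block of each vertical column vanishes, which is exactly $A_{21}=O$.

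The remaining work is to reconcile the two normalizations, and this is the step I expect to be the main obstacle. After a change of basis in $GL_{n_1+n_2}(\Z)$ putting $\theta_\ast$ into coordinate-projection form with kernel $\Z^{n_1}\oplus 0$, I still have to restore the first $n_1+n_2$ columns to $E_{n_1}$ and $E_{n_2}$ as in~\eqref{char} without reintroducing entries below $E_{n_1}$. This succeeds because the normalizing change of basis is block upper-triangular: the vertex $v_1\times v_2$ used for the normalization already has its vertical columns in $\ker\theta_\ast$, so left multiplication preserves the vanishing of the lower block of every vertical column. By Lemma~\ref{block_char} the corner blocks of the normalized matrix are $\Lambda_i=(E_{n_i},A_{ii})$, matching the asserted fiber $M(P_1,\Lambda_1)$ and base $M(P_2,\Lambda_2)$, and everything outside this bookkeeping is a direct reading of the quotient construction.
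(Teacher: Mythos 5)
Your proposal is correct, but it takes a genuinely different route from the paper: the paper offers no argument at all, stating only that the lemma ``is an immediate corollary of Theorem~6 in \cite{Do}'', whereas you unpack a direct proof from the quotient construction~\eqref{eqn:construction of a quasitoric manifold}. Your two key mechanisms are sound and are exactly what makes the citation ``immediate'': in the ``if'' direction, the vanishing of the lower-left block guarantees that $\theta_\ast$ kills every column attached to a facet of the form $F_{1,j}\times P_2$, so the coordinate projection descends to a well-defined $\theta$-equivariant map whose fiber over an interior point is visibly $M(P_1,\Lambda_1)$; in the ``only if'' direction, effectiveness of the base action forces $\theta(T_{F^1_j})$ to be trivial, i.e. every vertical characteristic vector lies in $\ker\theta_\ast$, and your observation that the normalizing matrix is block upper-triangular (hence its inverse preserves $\Z^{n_1}\oplus 0$) correctly shows that restoring the form~\eqref{char} does not reintroduce entries in the lower-left block, after which Lemma~\ref{block_char} identifies the diagonal blocks. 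What the paper's approach buys is brevity, at the cost of asking the reader to translate Dobrinskaya's statement into the normalized matrix language used here; what yours buys is a nearly self-contained and more instructive argument, though note it is not fully independent of \cite{Do}: you still invoke Theorem~6 for local triviality and the fibers over non-interior points in the ``if'' direction, and Proposition~5 in the ``only if'' direction to know that $\pi$ covers the projection $P_1\times P_2\to P_2$ on orbit spaces (without which the claim that $\pi(\rho^{-1}(F^1_j))$ meets the free part of $B$ would need a separate argument). These are the same external dependencies the paper accepts wholesale, so they are not gaps relative to the paper's standard of rigor.
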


    Let us consider a projective bundle $P(E)$ over a \smoothcompact toric surface $S$ with $\beta_2 (S) = m$. We remark that $S$ is a quasitoric manifold of (real) dimension $4$ with a natural $T^2$-action, and its orbit space $S/T^2$ is an $(m+2)$-gon $G(m+2)$. Assume that $P(E)$ is a $\CP^n$-bundle over $S$. Then, $P(E)$ is also a quasitoric manifold of dimension $2n+4$, and its orbit space is $\Delta^n \times G(m+2)$, where $\Delta^n$ denotes an $n$-simplex.

    Let us find the characteristic matrix of $P(E)=P(\CC\oplus\bigoplus_{i=1}^n L_i)$ explicitly.
    Put $\mathcal{F}(G(m+2))=\{F_1,\ldots,F_{m+2}\}$ so that $F_i\cap F_{i+1} \neq\emptyset$ for $i=1, \ldots, m+1$ and $F_{m+2} \cap F_{1} \neq \emptyset$.
    Let $\rho\colon S\to G(m+2)$ be the orbit map, and let the characteristic matrix\footnote{Since $S$ is projective, the columns of $\Lambda_S$ can generate the normal fan of a polygon in $\R^2$. That is to say that the polygon $\{\mathbf{x}\in\R^2\,|\,\langle\mathbf{x},\blambda_S(F_i)\rangle\geq 0 \mbox{ for all } i=1,\ldots,m+2\}$ is identified with the orbit space of $S$.} of $S$ be given by
    \begin{equation}\label{eqn:char. mx of toric surface}
            \Lambda_S=\left(
            \begin{array}{ccccc}
                1 & 0 & -\Lambda_1^1 & \cdots & -\Lambda_1^m \\
                0 & 1 & -\Lambda_2^1 & \cdots & -\Lambda_2^m \\
            \end{array}
            \right),
    \end{equation} where the order of facets is $F_{m+1},F_{m+2},F_1,\ldots,F_m$.
    Note that $S$ has a $T^2$-invariant complex structure. Thus, every normal bundle $\nu_j:=\nu(S_j\subset S)$ over a characteristic submanifold $S_j:=\rho^{-1}(F_j)$, $j=1,\ldots,m+2$, is a $T^2$-invariant complex line bundle.
    Let $\gamma_j$ be the $T^2$-invariant complex line bundle over $S$ extending the normal bundle $\nu_j$ of the characteristic submanifold $S_j$ trivially outside the tubular neighborhood of $S_j$. Then, the first Chern class $c_1(\gamma_j)\in H^2(S)$ is dual to the characteristic submanifold $S_j$ for $j=1,\ldots,m+2$. Since $c_1(\gamma_1),\ldots,c_1(\gamma_m)$ generate $H^2(S)$ and every complex line bundle is classified by its first Chern class, each complex line bundle $L_i$ over $S$ is isomorphic to $\gamma_1^{a_{i1}}\otimes \cdots \otimes\gamma_m^{a_{im}}$ for some integers $a_{i1},\ldots,a_{im}$. Hence, $P(E)$ is isomorphic to $P(\CC\oplus\bigoplus_{i=1}^n(\bigotimes_{j=1}^m\gamma_j^{a_{ij}}))$ for some integers $a_{ij}\in\Z$.

    \begin{proposition}\label{prop:char_ftn_projective_bundle}
        Let $S$ and $\gamma_j$ be given as above. Let $L_i=\bigotimes_{j=1}^m\gamma_j^{a_{ij}}$ for $i=1,\ldots,n$.
        Then, $M=P(\uC\oplus\bigoplus_{i=1}^n L_i)$ is a quasitoric manifold whose characteristic matrix is of the form
        \begin{equation}\label{eqn:char. matrix of proj. bundle over a toric mfd 1}
            \Lambda=\left(
            \begin{array}{ccc|cc|c|ccc}
                1 & \cdots & 0 & 0 & 0 & -1 & -a_{11} & \cdots & -a_{1,m} \\
                \vdots & \ddots & \vdots & \vdots & \vdots & \vdots & \vdots & \ddots & \vdots \\
                0 & \cdots & 1 & 0 & 0 & -1 & -a_{n,1} & \cdots & -a_{n,m} \\
                \hline
                0 & \cdots & 0 & 1 & 0 & 0 & -\Lambda_1^1 & \cdots & -\Lambda_1^m \\
                0 & \cdots & 0 & 0 & 1 & 0 & -\Lambda_2^1 & \cdots & -\Lambda_2^m \\
            \end{array}
            \right),
        \end{equation}
        where the order of facets of $\Delta^n \times G(m+2)$ is
        $$F_1^1,\ldots,F_n^1, F_{m+1}^2,F_{m+2}^2, F_{n+1}^1, F_1^2,\ldots,F_m^2.$$
    \end{proposition}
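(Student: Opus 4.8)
The plan is to read off the characteristic matrix of $M=P(E)$ from its structure as an equivariant $\CP^n$-bundle over $S$, reducing the whole statement to the determination of a single off-diagonal block. First I would observe that the projection $\pi\colon M\to S$ is an equivariant bundle in the sense of Definition~\ref{def:equiv. bundle}: the induced $(\C^\ast)^{n+2}$-action on $P(E)$ described in the introduction restricts to a $T^{n+2}$-action covering the $T^2$-action on $S$ via a surjection $\theta\colon T^{n+2}\to T^2$, and each fiber $\pi^{-1}(b)\cong\CP^n$ carries the standard action of $\ker\theta\cong T^n$. Hence Lemma~\ref{lem:chr_eq_bdl} applies with $P_1=\Delta^n$, $P_2=G(m+2)$, $n_1=n$, and $n_2=2$, so that, up to equivalence, the characteristic matrix of $M$ has the block form
\[
\Lambda=\left(\begin{array}{cc|cc} E_n & O & A_{11} & A_{12}\\ O & E_2 & O & A_{22}\end{array}\right),
\]
where $\Lambda_1=(E_n\mid A_{11})$ is a characteristic matrix of the fiber $\CP^n$ over $\Delta^n$ and $\Lambda_2=(E_2\mid A_{22})$ is one of the base $S$ over $G(m+2)$.

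Next I would fix the two diagonal blocks from the factors. The fiber $\CP^n$ with its standard action over the $n$-simplex has $A_{11}$ equal to the column vector $(-1,\ldots,-1)$, which is precisely the column attached to the facet $F_{n+1}^1$ in \eqref{eqn:char. matrix of proj. bundle over a toric mfd 1}, carrying $-1$ in the first $n$ rows and $0$ in the last two. The base block $A_{22}$ is read directly off the characteristic matrix \eqref{eqn:char. mx of toric surface} of $S$, giving the lower-right block whose $j$-th column is $(-\Lambda_1^j,-\Lambda_2^j)$. After this, the only undetermined data is the $n\times m$ block $A_{12}$, and the entire content of the proposition reduces to the claim that its $(i,j)$ entry equals $-a_{ij}$.

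The decisive step, which I expect to be the main obstacle, is to identify $A_{12}$ with the line-bundle data $L_i=\bigotimes_{j=1}^m\gamma_j^{a_{ij}}$; this is the only place where the integers $a_{ij}$ must enter. I would do this by matching the linear relations coming from the rows of $\Lambda$ against divisor-class identities in $H^\ast(M)$. Let $u_0,u_1,\ldots,u_m$ denote the degree-two generators dual to the facets $F_{n+1}^1,F_1^2,\ldots,F_m^2$, and let $v_i$ be the generator dual to $F_i^1$. The $i$-th fiber row of $\Lambda$ yields the relation $v_i=u_0-\sum_{j=1}^m (A_{12})_{ij}\,u_j$. On the other hand, the characteristic submanifold $\rho^{-1}(F_i^1)$ is the sub-bundle $\{z_i=0\}=P(\bigoplus_{k\neq i}L_k)$, where $z_i$ is a section of $\mathcal{O}_{P(E)}(1)\otimes\pi^\ast L_i$; hence its Poincar\'e dual is $x+\pi^\ast c_1(L_i)$. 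Using $u_0=x$ (the class of $\{z_0=0\}$, arising from the trivial summand $\uC$) and $u_j=\pi^\ast c_1(\gamma_j)$ (the pullback of the base generator dual to $S_j$), this class equals $u_0+\sum_{j=1}^m a_{ij}\,u_j$. Comparing the two expressions for $v_i$ forces $(A_{12})_{ij}=-a_{ij}$, which completes the identification. The points requiring care are the sign conventions tying $x$ to the tautological bundle (as fixed in Section~\ref{sec:projective bundles over a smoothcompact toric surface}) and tying $\gamma_j$ to the columns of \eqref{eqn:char. mx of toric surface}, together with the verification that these cohomology generators are genuinely Poincar\'e dual to the named characteristic submanifolds.
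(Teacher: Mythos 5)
Your proposal is correct in substance, but it establishes the key block by a genuinely different mechanism than the paper. The paper never invokes Lemma~\ref{lem:chr_eq_bdl}: after constructing the same $T^{n+2}$-action (the $T^2$-action lifted to the bundles $\gamma_j$, plus the fiberwise $T^n$-action \eqref{eqn:action on fibers}), it reads off \emph{every} column of $\Lambda$ directly from torus weights, by asking which circle subgroup of $T^{n+2}$ fixes each characteristic submanifold pointwise; the decisive input is that $\blambda_S(F_j)(s)$ acts on the fibers of $\left.L_i\right|_{S_j}$ by multiplication by $s^{a_{ij}}$, so the circle fixing $\pi^{-1}(F_j^2)$ must compensate with $(t_1,\ldots,t_n)=(s^{-a_{1j}},\ldots,s^{-a_{nj}})$, producing the columns $(-a_{1j},\ldots,-a_{nj},-\Lambda_1^j,-\Lambda_2^j)^T$ with signs tied to the complex structure (the columns for $F_{m+1}^2$, $F_{m+2}^2$, $F_{n+1}^1$ fall out of the same computation). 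You instead obtain the coarse block shape (in particular $A_{21}=O$) from Dobrinskaya's criterion and then pin down $A_{12}$ cohomologically, equating the Davis--Januszkiewicz linear relation $v_i=u_0-\sum_j(A_{12})_{ij}u_j$ with the geometric Poincar\'e duals $v_i=x+\pi^\ast c_1(L_i)$, $u_0=x$, $u_j=\pi^\ast c_1(\gamma_j)$, and using that $x,\pi^\ast c_1(\gamma_1),\ldots,\pi^\ast c_1(\gamma_m)$ are linearly independent in $H^2(M)$; that comparison is valid, and the sign caveat you flag is exactly the residual omniorientation (column-sign) freedom, which is cleanest to discharge by working throughout with the omniorientation induced by the complex structure, for which the DJ generators literally are the complex Poincar\'e duals. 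Once you do that, your comparison applied to all $n+2$ rows determines every block at once: the coefficient of $x$ in row $i\leq n$ forces $A_{11}=(-1,\ldots,-1)^T$, and rows $n+1,n+2$ together with the relations $c_1(\gamma_{m+1})=\sum_j\Lambda_1^jc_1(\gamma_j)$ and $c_1(\gamma_{m+2})=\sum_j\Lambda_2^jc_1(\gamma_j)$ in $H^\ast(S)$ force $A_{21}=O$ and $A_{22}=(-\Lambda_k^j)$, so Lemma~\ref{lem:chr_eq_bdl} is actually dispensable in your argument. In short, the paper's weight computation is self-contained and avoids all orientation bookkeeping, while your route leverages standard cohomological machinery (the projective-bundle formula and divisor classes) at the cost of concentrating the delicacy in sign conventions.
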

    \begin{proof}
        Let $X^G = \{x \in X \mid g\cdot x = x \text{ for all }g\in G\}$ for a topological space $X$ with a group action of $G$.
        Note that for a fixed point $p\in S^{T^2}$, we know that
        \begin{itemize}
            \item $\left.\gamma_i\right|_p=\left.\nu_i\right|_p$ as $T^2$-modules if $p\in S_i^{T^2}$, and
            \item  $\left.\gamma_i\right|_p$ is the trivial $T^2$-module if $p \not\in S_i^{T^2}$.
         \end{itemize}

         Remark that $\blambda_S(F_j)(S^1) \subset T^2$ is the circle subgroup that fixes $S_j \subset S$.
         Since the $T^2$-action on $S$ is effective, the action of $\blambda_S(F_j)(s)$ (for $s\in S^1$) on $\left.\gamma_i\right|_{S_j}$ is the complex multiplication on fibers by $s\in S^1\subset\C$ when $i=j$, and trivial if  $i \neq j$.

        On the other hand, we can give an additional $T^n$-action on a $T^2$-invariant complex vector bundle $\CC\oplus\bigoplus_{i=1}^n L_i$ by
        \begin{equation}\label{eqn:action on fibers}
            (t_1,\ldots,t_n)\cdot(u_0,u_1,\ldots,u_n)=(u_0,t_1u_1,\ldots,t_nu_n).
        \end{equation}
        Then, the total space of the projective bundle $\pi\colon M=P(\CC\oplus\bigoplus_{i=1}^n L_i)\to S$ has a $T^{n+2}$-action and is a quasitoric manifold with characteristic submanifolds $M_j:=\pi^{-1}(S_j)$ for $j=1,\ldots,m+2$ and $N_i:=\{u_i=0\}$ for $i=0,1,\ldots,n$.

        To find the characteristic matrix of $M$, we need to know which circle subgroup of $T^{n+2}$ fixes each characteristic submanifold. The $i$-th component of $T^{n+2}$ fixes $N_i$ when $i=1,\ldots,n$, and the circle subgroup $$\{(t^{-1},\ldots,t^{-1},1,1)\in T^{n+2}\,|\,t\in S^1\} \subset T^{n+2}$$ fixes $N_0$. The action of $\blambda_S(F_j)(s)$ ($s\in S^1$) on fibers of $\left.L_i\right|_{S_j}$ is the complex multiplication by $s^{a_{ij}}$ when $1\leq j\leq m$, and is trivial when $j=m+1,m+2$, while the action of $(t_1,\ldots,t_n)$ on $L_i$ is the complex multiplication by $t_i$ by~\eqref{eqn:action on fibers}. Therefore, in order to make the fiberwise action trivial, we have
        $$(t_1,\ldots,t_n)=(s^{-a_{1j}},\ldots,s^{-a_{nj}})$$
        when $1\leq j\leq m$ and $$(t_1,\ldots,t_n)=(1,\ldots,1)$$ when $j$ is $m+1$ or $m+2$. Thus, the proposition is proved.
    \end{proof}

    If $B$ is a $4$-dimensional quasitoric manifold that is not a \smoothcompact toric surface, then it does not necessarily admit a $T^2$-invariant almost complex structure. However, in this case, we can give an additional structure on $B$, the \emph{omniorientation}, which is defined to be a choice of an orientation for $B$ and of orientations for each of characteristic submanifolds $B_i$, $i=1,\ldots,m+2$. Then, the omniorientation determines an orientation for every normal bundle $\nu_i$. Since every $\nu_i$ is a two-plane bundle, it follows that an orientation of $\nu_i$ enables us to interpret $\nu_i$ as a complex line bundle. Since the torus $T^2$ is oriented, choosing an orientation for $G(m+2)$ is equivalent to choosing an orientation for $B$. Since each circle subgroup $T_{F_i}$ fixing $S_i$ acts on the normal bundle $\nu_i$, a choice of an omniorientation for $B$ is equivalent to a choice of an orientation for $G(m+2)$ together with an unambiguous choice of column vectors of $\Lambda_B$ (see Chapter~5 in \cite{BP} for details).

    From a similar argument to the proof of Proposition~\ref{prop:char_ftn_projective_bundle}, we obtain the following corollary.

    \begin{corollary}
        If $B$ is a $4$-dimensional quasitoric manifold with an omniorientation, then there is a projective bundle over $B$ whose characteristic matrix is of the form~\eqref{eqn:char. matrix of proj. bundle over a toric mfd 1}.
    \end{corollary}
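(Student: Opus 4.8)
The plan is to run the proof of Proposition~\ref{prop:char_ftn_projective_bundle} again, with the $T^2$-invariant complex structure on $S$ replaced everywhere by the given omniorientation on $B$. In the toric case that complex structure was used for a single purpose: to regard each normal bundle $\nu_j = \nu(B_j \subset B)$ of a characteristic submanifold $B_j$ as a $T^2$-invariant complex line bundle on which the circle $T_{F_j} = \blambda_B(F_j)(S^1)$ acts by standard complex rotation. First I would observe that the omniorientation furnishes exactly this datum: each $\nu_j$ is an oriented real two-plane bundle carrying a fiberwise rotation action of $T_{F_j}$, and an orientation together with this rotation singles out a $T^2$-invariant complex structure, namely the one for which $T_{F_j}$ acts by multiplication by $s \in S^1 \subset \C$. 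As recalled in the paragraph preceding the statement, an omniorientation is precisely an orientation of every $\nu_i$ together with an unambiguous (not merely up-to-sign) choice of the columns of $\Lambda_B$, so this complex structure is well defined and matches the chosen characteristic matrix.

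Granting this, I would proceed verbatim as before. Extend each $\nu_j$ to a $T^2$-invariant complex line bundle $\gamma_j$ over $B$ by gluing in the trivial bundle outside a tubular neighborhood of $B_j$; then $c_1(\gamma_j)$ is Poincar\'e dual to $B_j$, and since $\beta_2(B)=m$ the classes $c_1(\gamma_1),\ldots,c_1(\gamma_m)$ generate $H^2(B)$. For arbitrary integers $a_{ij}$ set $L_i := \bigotimes_{j=1}^m \gamma_j^{a_{ij}}$ and $E := \uC \oplus \bigoplus_{i=1}^n L_i$, and form $M=P(E)$. Equipping the fibers with the additional $T^n$-action of~\eqref{eqn:action on fibers} makes $M$ a quasitoric manifold over $\Delta^n \times G(m+2)$.

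To read off the characteristic matrix I would repeat the fixed-point analysis of Proposition~\ref{prop:char_ftn_projective_bundle} word for word. The two local identifications of $\gamma_i$ at a fixed point $p$ (namely $\gamma_i|_p = \nu_i|_p$ when $p \in B_i^{T^2}$, and $\gamma_i|_p$ trivial otherwise) hold by construction, so $\blambda_B(F_j)(s)$ acts on $\gamma_i|_{B_j}$ as multiplication by $s^{\delta_{ij}}$, hence on $L_i|_{B_j}$ by $s^{a_{ij}}$ for $1 \le j \le m$ and trivially for $j=m+1,m+2$. Solving for the $T^n$-parameter that trivialises the fiberwise action yields the same columns as in the toric case, giving a characteristic matrix of the form~\eqref{eqn:char. matrix of proj. bundle over a toric mfd 1}.

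The one genuinely new point, and hence the main obstacle, is the first step: one must check that the omniorientation produces a $T^2$-invariant complex structure on each $\nu_j$ for which $T_{F_j}$ acts by $s$ rather than $s^{-1}$, i.e.\ with the sign of the rotation matching the chosen column $\blambda_B(F_j)$. This compatibility is exactly the equivalence between omniorientations and ``an orientation of $G(m+2)$ together with an unambiguous choice of the columns of $\Lambda_B$'' recorded in Chapter~5 of~\cite{BP}; once it is invoked, nothing in the computation of Proposition~\ref{prop:char_ftn_projective_bundle} used the complex structure of $S$ beyond this property, so the remainder of the argument is formally identical.
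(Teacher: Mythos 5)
Your proposal is correct and matches the paper's own argument: the paper derives this corollary precisely by noting that the omniorientation turns each normal bundle $\nu_j$ into a $T^2$-invariant complex line bundle (equivalently, fixes an unambiguous choice of column vectors of $\Lambda_B$, as in Chapter~5 of \cite{BP}) and then repeating the proof of Proposition~\ref{prop:char_ftn_projective_bundle}. Your identification of the sign-compatibility between the circle action and the chosen columns as the only genuinely new point is exactly the content of the paragraph preceding the corollary in the paper.
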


    We close this section with a brief review of the cohomology ring of a quasitoric manifold.
    Let $P$ be an $n$-dimensional simple polytope with $d$ facets and $\cF(P) = \{F_1, \ldots, F_d\}$.
    Let $\mathbf{k}[v_1, \ldots, v_d]$ denote the polynomial ring in $d$ variables over a commutative ring $\mathbf{k}$ with unit, $\deg v_i = 2$. We primarily assume that $\mathbf{k}$ is a ring of integers $\Z$ or a ring of rational numbers $\Q$.
    We identify each $F_i \in \cF(P)$ with the indeterminate $v_i$. The \emph{face ring} (or \emph{Stanley-Reisner ring}) $\mathbf{k}(P)$ of $P$ is the quotient ring
    $$\mathbf{k}(P) =\mathbf{k}[v_1, \ldots, v_d]/I_P,$$
    where $I_P$ is the ideal generated by the monomials $v_{i_1}\cdots v_{i_\ell}$ whenever $F_{i_1}\cap\cdots\cap F_{i_\ell}=\emptyset$. The ideal $I_P$ is called the \emph{Stanley-Reisner ideal} of $P$.

    Let $M$ be a quasitoric manifold over $P$ with the projection $\rho\colon M \to P$ and the characteristic function $\blambda$. Then, one can find an isomorphism between the face ring $\Z(P)$ and the equivariant cohomology ring $H_T^\ast(M):=H^\ast(ET \times_T M)$ of $M$ with $\Z$ coefficients:
    $$
     H_T^\ast (M)\cong \Z[v_1, \ldots, v_d]/I_P = \Z(P),
    $$
    where $v_j$ is the equivariant Poincar\'e dual of the characteristic submanifold $M_j = \rho^{-1}(F_j)$ in $M$. Note that $H_T^{\ast}(M)$ is not only a ring but also an $H^{\ast}(BT) = \Z[t_1,\ldots,t_n]$-module via the map $p^\ast$, where $p\colon ET \times_T M \to BT$ is the natural projection, and $p^{\ast}$ takes $t_i$ to $\theta_i:=\lambda_{i1}v_1 + \cdots + \lambda_{id}v_d \in \Z(P)$, where $\blambda(F_i)= (\lambda_{1i} , \ldots, \lambda_{ni})^T \in \Z^n$ for $i=1,\ldots, n$.
    Since everything has vanishing odd degrees,
    $H_T^{\ast}(M)$ is a free $H^{\ast}(BT)$-module. Hence, the kernel of $\Z(P) = H_T^{\ast}(M) \rightarrow H^{\ast}(M)$ is the ideal $J_{\blambda}$ of $\Z(P)$ generated by $\theta_1, \ldots, \theta_n$. Therefore, we have
    \begin{equation*}
        H^{\ast}(M) = \Z[v_1, \ldots, v_d]/(I_P +J_{\blambda}).
    \end{equation*}
    See \cite{DJ} for more details of the previous argument.

    Let $M$ be a quasitoric manifold equivalent to a projective bundle over a $4$-dimensional quasitoric manifold $B$. Let $\pi \colon M \to B$ be the projection map. Then, the cohomology ring of $M$ has the natural $H^\ast(B)$-module structure via the induced map $\pi^\ast$ by $\pi$. Precisely, assume that its characteristic matrix is of the form \eqref{eqn:char. matrix of proj. bundle over a toric mfd 1}.
    We note that
    $$
        H^\ast(B) = \Z[x_1, \ldots, x_m] / \mathcal{I},
    $$ where $\mathcal{I}$ is an ideal generated by
    $$
        \begin{array}{cl}
        x_i(\Lambda_1^1x_1+\Lambda_1^2x_2+\cdots+\Lambda_1^mx_m) & \text{for $i=1,\ldots,m-1$,} \\
        x_j(\Lambda_2^1x_1+\Lambda_2^2x_2+\cdots+\Lambda_2^mx_m) & \text{for $j=2,\ldots,m$, and} \\
        x_kx_\ell & \text{for $k\neq\ell\pm 1$.}
        \end{array}
    $$
    Then, $H^\ast(M)$ is computed as the following :
    \begin{equation} \label{eq:cohom_quasi_bundle}
        H^\ast(M) = H^\ast(B) [x_0] \left/ x_0 \prod_{i=1}^n \left(x_0+\sum_{j=1}^m a_{i,j} x_j \right)\right.,
    \end{equation}
    where $x_j = \pi^\ast (x_j)$. Furthermore, $\pi^\ast (H^\ast(B)) \subset H^\ast(M)$ provides a natural subring $H^\ast(B)$ of $H^\ast(M)$.

\section{Combinatorial rigidity for the product of a simplex and a polygon}\label{sec:polytope rigidity}
    In this section, we claim that a quasitoric manifold whose cohomology ring is isomorphic to that of a projective bundle $P(E)$ over a $4$-dimensional quasitoric manifold has the orbit space combinatorially equivalent to $\Delta^n \times G(m+2)$. In order to do this, we introduce one important invariant of a simple polytope coming from its face ring.

    Let $P$ be a simple polytope with $d$ facets. A \emph{finite free resolution}$[R:\phi]$ of a face ring $\Q(P)$ is an exact sequence
    \begin{equation*}
        \xymatrix{ 0 \ar[r]& R^{-r} \ar[r]^{\phi}& R^{-r+1} \ar[r]^{\phi}&
        \cdots \ar[r]^{\phi} & R^{-1}\ar[r]^{\phi}& R^0 \ar[r]^{\phi}&
        \Q(P) \ar[r] & 0},
    \end{equation*}
    where $R^{-i}$ is a finite free $\Q[x_1,\ldots,x_d]$-module and each differential map $\phi$ is degree-preserving. If we take $R^{-i}$ to be the module generated by the minimal basis of $\ker\phi$, we get a \emph{minimal resolution} of $\Q(P)$. Since $\Q(P)$ is graded, so are all $R^{-i}$'s, that is, $R^{-i} = \bigoplus_{j \geq 0}R^{-i,2j}$. Let
    $$
    \beta^{-i,2j}(P) = \dim_\Q R^{-i,2j},
    $$ and we call it the $(-i,2j)$-th \emph{bigraded Betti number} of $P$.
    \begin{theorem}\label{thm:Hocster}\cite{Ho}
    Let $P$ be a simple polytope with facets $F_1, \ldots, F_d$.
    Then, we have
    $$
    \beta^{-i,2j}(P) = \sum_{\atopp{|\sigma|=j,}{\sigma \subset \{1, \ldots, d\}}} \dim \widetilde{H}_{j-i-1}(\bigcup_{i\in \sigma} F_i).
    $$
    Here, $\dim \widetilde{H}_{-1}(\emptyset)=1$ by convention.
    \end{theorem}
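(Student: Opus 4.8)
The plan is to recognize $\Q(P)$ as the Stanley--Reisner ring $\Q[K_P]$ of the simplicial complex $K_P$ on the vertex set $\{1,\dots,d\}$ whose simplices are exactly the subsets $\sigma$ with $\bigcap_{i\in\sigma}F_i\neq\emptyset$; this is immediate from the definition of the face ring, since $I_P$ is generated by the squarefree monomials indexed by the non-faces of $K_P$. By definition, $\beta^{-i,2j}(P)=\dim_\Q\operatorname{Tor}^{-i,2j}_R(\Q(P),\Q)$ with $R=\Q[x_1,\dots,x_d]$, a quantity independent of the chosen free resolution, so I am free to compute it by whatever means is most convenient rather than by the minimal resolution.

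First I would compute the $\operatorname{Tor}$ groups using the Koszul resolution of the residue field $\Q$ over $R$. This exhibits $\operatorname{Tor}_R(\Q(P),\Q)$ as the homology of the Koszul differential graded algebra $\Q(P)\otimes_R\Lambda[u_1,\dots,u_d]$, with $\deg u_k=(-1,2)$ and differential induced by $u_k\mapsto x_k$. Every term here is $\Z^d$-multigraded, so the complex splits as a direct sum over multidegrees $\mathbf{a}\in\Z_{\geq 0}^d$. A direct check shows that a multidegree contributes nothing to the homology unless it is squarefree, i.e. a $0$--$1$ vector, which I identify with a subset $\sigma\subset\{1,\dots,d\}$; the term $2j$ then forces $|\sigma|=j$.

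The combinatorial heart of the argument is the identification, for each squarefree multidegree $\sigma$, of the multidegree-$\sigma$ summand of the Koszul complex with the (shifted) reduced simplicial cochain complex of the full subcomplex $(K_P)_\sigma$, the shift placing the group $\widetilde{H}^{\,j-i-1}$ in homological position $-i$ and internal degree $2j$. Concretely, the monomials surviving in multidegree $\sigma$ are indexed by the faces of $(K_P)_\sigma$, and the Koszul differential matches the simplicial coboundary once signs are tracked; taking homology yields $\widetilde{H}^{\,j-i-1}((K_P)_\sigma;\Q)$. Over the field $\Q$, universal coefficients give $\dim\widetilde{H}^{\,j-i-1}=\dim\widetilde{H}_{j-i-1}$, and the convention $\dim\widetilde{H}_{-1}(\emptyset)=1$ accounts for $\sigma=\emptyset$ (contributing to $\beta^{0,0}=1$).

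Finally I would translate from the abstract complex $(K_P)_\sigma$ to the geometric union $\bigcup_{i\in\sigma}F_i$ appearing in the statement. Each facet $F_i$ of the polytope $P$ is contractible, and every nonempty intersection $\bigcap_{i\in\tau}F_i$ with $\tau\subset\sigma$ is again a face of $P$, hence contractible; thus the closed cover $\{F_i\}_{i\in\sigma}$ of $\bigcup_{i\in\sigma}F_i$ satisfies the hypotheses of the nerve lemma, and its nerve is precisely $(K_P)_\sigma$. Hence $\bigcup_{i\in\sigma}F_i\simeq (K_P)_\sigma$, so $\widetilde{H}_{j-i-1}(\bigcup_{i\in\sigma}F_i)\cong\widetilde{H}_{j-i-1}((K_P)_\sigma)$, and summing over all $\sigma$ with $|\sigma|=j$ gives the stated formula. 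I expect the main obstacle to be the bookkeeping in the middle step --- pinning down the differentials, signs, and the precise degree shift that turn the squarefree piece of the Koszul complex into the reduced cochain complex of $(K_P)_\sigma$ --- while the nerve-lemma translation is the one genuinely polytope-specific ingredient and is otherwise routine.
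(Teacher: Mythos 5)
The paper never proves this statement: it is quoted verbatim from Hochster's work \cite{Ho}, and no argument appears in the text, so the comparison can only be made against the cited literature. Your proposal is a correct reconstruction of the standard proof of exactly that cited result. The chain of reductions --- identifying $\Q(P)$ with the Stanley--Reisner ring of the complex $K_P$ dual to $P$, replacing the minimal resolution by the Koszul resolution of $\Q$ (legitimate because minimality forces the differentials of the tensored minimal resolution to vanish, so $\beta^{-i,2j}(P)=\dim_\Q\operatorname{Tor}^{-i,2j}(\Q(P),\Q)$, a resolution-independent quantity), splitting the Koszul complex by $\Z^d$-multidegree, discarding non-squarefree multidegrees, and identifying the multidegree-$\sigma$ piece with the shifted reduced simplicial cochain complex of the full subcomplex $(K_P)_\sigma$ --- is precisely Hochster's computation as it appears in the standard references (e.g.\ Buchstaber--Panov). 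The passage from $(K_P)_\sigma$ to the union $\bigcup_{i\in\sigma}F_i$ is the genuinely polytopal step, and your nerve-lemma argument for it is sound: the facets and all their nonempty intersections are faces of $P$, hence compact convex sets, and $(K_P)_\sigma$ is by definition the nerve of the closed cover $\{F_i\}_{i\in\sigma}$, so the closed-cover (Borsuk) nerve theorem for finite families of compact convex sets gives the homotopy equivalence. The two points you flag as bookkeeping are indeed the only places requiring verification, and both are standard: acyclicity in a multidegree with some exponent at least $2$ is shown by an explicit contracting homotopy built from the corresponding variable, and the degree count placing faces of cardinality $j-i$ of $(K_P)_\sigma$ in homological position $-i$ and internal degree $2j$ yields $\widetilde{H}^{\,j-i-1}((K_P)_\sigma;\Q)$, whose dimension agrees with that of $\widetilde{H}_{j-i-1}$ over the field $\Q$; the convention $\dim\widetilde{H}_{-1}(\emptyset)=1$ then correctly produces $\beta^{0,0}(P)=1$ from $\sigma=\emptyset$.
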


    Bigraded Betti numbers also satisfy the following relations (see \cite{BP} for details).
    \begin{proposition} \label{prop:betti_numbers} Let $P$ be an $n$-dimensional simple polytope with $d$ facets. Then, we have the following:
    \begin{enumerate}
      \item $\beta^{0,0}(P) = \beta^{-(d-n),2d}(P)=1$;
      \item (Poincar{\'e} duality) $\beta^{-i,2j}(P) = \beta^{-(d-n)+i, 2(d-j)}(P)$;
      \item $\beta^{-i,2j}(P_1 \times P_2) = \sum_{\atopp{i'+i''=i}{j'+j''=j}} \beta^{-i',2j'}(P_1) \beta^{-i'',2j''}(P_2)$.
    \end{enumerate}
    \end{proposition}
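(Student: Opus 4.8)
The plan is to derive all three identities from Hochster's formula (Theorem~\ref{thm:Hocster}), supplemented by two standard facts: that the boundary $\partial P$ of an $n$-dimensional simple polytope is a sphere $S^{n-1}$, and that the face ring of a Cartesian product of polytopes splits as a tensor product. Throughout I write $X_\sigma := \bigcup_{k\in\sigma}F_k$ for the union of the facets indexed by a subset $\sigma\subseteq\{1,\ldots,d\}$. Part (1) is then a direct evaluation of Theorem~\ref{thm:Hocster} at the two extreme bidegrees. For $\beta^{0,0}(P)$ the only admissible index set is $\sigma=\emptyset$, and the convention $\dim\widetilde{H}_{-1}(\emptyset)=1$ gives $\beta^{0,0}(P)=1$. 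For $\beta^{-(d-n),2d}(P)$ the only index set with $|\sigma|=d$ is $\sigma=\{1,\ldots,d\}$, for which $X_\sigma=\partial P\cong S^{n-1}$; since the homological degree is $d-(d-n)-1=n-1$, the single contributing term is $\dim\widetilde{H}_{n-1}(S^{n-1})=1$.

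For the Poincar\'e duality in (2) I would match the summands of the two sides under complementation $\sigma\leftrightarrow\sigma^c$. If $|\sigma|=j$, the $\sigma$-summand of $\beta^{-i,2j}(P)$ is $\dim\widetilde{H}_{j-i-1}(X_\sigma)$, whereas the $\sigma^c$-summand of $\beta^{-(d-n)+i,2(d-j)}(P)$ (note $|\sigma^c|=d-j$) is $\dim\widetilde{H}_{n-2-(j-i-1)}(X_{\sigma^c})$, the two homological degrees summing to $n-2$. By the nerve lemma applied to the closed cover of $X_\sigma$ by the convex faces $\{F_k\}_{k\in\sigma}$, the space $X_\sigma$ is homotopy equivalent to the full subcomplex on $\sigma$ of the dual simplicial sphere $K_P\cong S^{n-1}$, and combinatorial Alexander duality inside $K_P$ identifies $\dim\widetilde{H}_{k}(X_\sigma)$ with $\dim\widetilde{H}_{n-2-k}(X_{\sigma^c})$. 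Summing the matched terms over all $\sigma$ yields (2). Equivalently and more conceptually, $\partial P\cong S^{n-1}$ forces $\Q(P)$ to be Gorenstein, and the minimal free resolution of a graded Gorenstein algebra is self-dual up to a degree shift, which is exactly the claimed symmetry (cf.\ \cite{BP}).

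Part (3) I would obtain from the multiplicativity of the face ring. The facets of $P_1\times P_2$ are the sets $F_{1,a}\times P_2$ and $P_1\times F_{2,b}$, and such a collection has empty intersection precisely when the chosen $P_1$-facets already meet emptily in $P_1$ or the chosen $P_2$-facets meet emptily in $P_2$; hence the minimal non-faces split by factor, so $I_{P_1\times P_2}=I_{P_1}+I_{P_2}$ and $\Q(P_1\times P_2)\cong\Q(P_1)\otimes_\Q\Q(P_2)$. Tensoring the minimal free resolutions of $\Q(P_1)$ and $\Q(P_2)$ over $\Q$ produces a complex of free $\Q[x_1,\ldots,x_{d_1+d_2}]$-modules that remains exact (everything is flat over the field $\Q$) and remains minimal (its differential still has entries in the irrelevant ideal); reading off the rank of its bidegree-$(-i,2j)$ term gives $\sum_{i'+i''=i,\,j'+j''=j}\beta^{-i',2j'}(P_1)\,\beta^{-i'',2j''}(P_2)$, which is (3).

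I expect the only real difficulty to lie in part (2): the content is entirely in justifying the term-by-term duality, that is, in correctly identifying $X_\sigma$ with a full subcomplex of the dual sphere $K_P$ and applying Alexander duality there (or, equivalently, in invoking the Gorenstein self-duality of $\Q(P)$). Once Theorem~\ref{thm:Hocster} and the product decomposition $\Q(P_1\times P_2)\cong\Q(P_1)\otimes_\Q\Q(P_2)$ are available, parts (1) and (3) are routine bookkeeping.
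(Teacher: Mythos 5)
Your argument is correct, but there is nothing in the paper to compare it against: the paper gives no proof of this proposition at all, introducing it with ``Bigraded Betti numbers also satisfy the following relations (see \cite{BP} for details).'' So your write-up is a self-contained substitute for an omitted proof, and it holds up: (1) is indeed immediate from Theorem~\ref{thm:Hocster} once one notes $\bigcup_{k}F_k=\partial P\cong S^{n-1}$; in (2) the nerve lemma applies because intersections of facets are faces, hence empty or contractible, so $X_\sigma$ is homotopy equivalent to the full subcomplex $K_\sigma$ of the dual simplicial sphere $K_P$, and combinatorial Alexander duality between $K_\sigma$ and $K_{\sigma^c}$ matches the summands in the two Hochster sums term by term; in (3) the splitting $I_{P_1\times P_2}=I_{P_1}+I_{P_2}$ is exactly right (a minimal non-face of $P_1\times P_2$ lives entirely in one factor), and the tensor product of minimal resolutions over the field $\Q$ is again a minimal resolution, giving the convolution formula. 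It is worth knowing that the route taken in the cited reference \cite{BP} is different: there $\beta^{-i,2j}(P)$ is identified with the bigraded cohomology of the moment-angle manifold $\mathcal{Z}_P$, item (2) is literally Poincar\'e duality for this $(n+d)$-dimensional manifold (which explains the name the paper attaches to it), and item (3) follows from $\mathcal{Z}_{P_1\times P_2}\cong\mathcal{Z}_{P_1}\times\mathcal{Z}_{P_2}$ and the K\"unneth formula. Your approach buys elementarity --- it uses only Theorem~\ref{thm:Hocster}, which the paper has already stated, plus standard combinatorial topology, and it exposes item (2) as Alexander duality in the dual sphere (equivalently, Gorenstein self-duality of $\Q(P)$); the moment-angle approach buys conceptual uniformity and the geometric explanation of the duality. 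Note also that your tensor decomposition $\Q(P_1\times P_2)\cong\Q(P_1)\otimes_\Q\Q(P_2)$ is the same fact the paper itself exploits later, in the proof of Theorem~\ref{thm:rigid polytope}.
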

    \begin{example}[Bigraded Betti numbers of the product of a simplex and a polygon] \label{example:simplex_x_polygon}
        Since the union of elements in each proper subset of $\mathcal{F}(\Delta^n)$ is contractible, by Theorem~\ref{thm:Hocster}, we have that for all $i,j$, $$\beta^{-i,2j}(\Delta^n) = 0, \quad \text{except for $\beta^{0,0}(\Delta^n) = \beta^{-1,2(n+1)}(\Delta^n)=1$}.$$
        Using Theorem~\ref{thm:Hocster}, it is a good exercise to prove that
          $$
            \beta^{-i,2j}(G(m+2)) = \left\{
                                      \begin{array}{ll}
                                        1, & \hbox{if $(i,j)=(0,0), (m,m+2)$;} \\
                                        \frac{(m+2)(k-1)}{m+2-k}\binom{m}{k}, & \hbox{if $(i,j)=(k-1,k)$;} \\
                                        0, & \hbox{otherwise}
                                      \end{array}
                                    \right.
          $$ (see \cite[Corollary 3.7]{CK10}).
        Assume that $n>1$. Then, by Proposition~\ref{prop:betti_numbers}~(3), we can compute the bigraded Betti numbers of $\Delta^n \times G(m+2),$ which is an $(n+2)$-dimensional simple polytope having $m+n+3$ facets:
          $$
            \beta^{-i,2j}(\Delta^n \times G(m+2)) = \left\{
                                                      \begin{array}{ll}
                                                        1, & \hbox{\tiny if $(i,j)=(0,0), (1,n+1),$} \\
                                                        & \quad \hbox{\tiny $(m,m+2),(m+1,m+n+3)$;}\\
                                                        \frac{(m+2)(k-1)}{m+2-k}\binom{m}{k}, & \hbox{\tiny if $(i,j)=(k-1,k), (k, k+n+1)$;} \\
                                                        0, & \hbox{otherwise.}
                                                      \end{array}
                                                    \right.
          $$
    \end{example}

    \begin{definition}
    A simple polytope $P$ is (toric) \emph{cohomologically rigid} if there exists a quasitoric manifold $M$ over $P$, and whenever there exists a quasitoric manifold $N$ over a simple polytope $Q$ with a graded ring isomorphism $H^\ast(M) \cong H^\ast(N)$, $Q$ is combinatorially equivalent to $P$.
    \end{definition}
    Since Choi-Panov-Suh \cite{ch-pa-su10} showed that for two quasitoric manifolds $M$ and $N$ over $P$ and $Q$, respectively, $H^\ast(M) \cong H^\ast(N)$ implies that $\beta^{-i,2j}(P)=\beta^{-i,2j}(Q)$ for all $i$ and $j$,
    one efficient way to decide the cohomological rigidity of a simple polytope $P$ is to check the uniqueness of its bigraded Betti numbers among all simple polytopes.
    \begin{definition}
        A simple polytope $P$ is (toric) \emph{combinatorially rigid} if $Q$ is combinatorially equivalent to $P$ whenever $\beta^{-i,2j}(Q) = \beta^{-i,2j}(P)$ for all $i,j$.
    \end{definition}
    We note that if $P$ supports a quasitoric manifold and it is combinatorially rigid, then $P$ is cohomologically rigid.

    \begin{theorem}\label{thm:rigid polytope}
        A product of a simplex and a polygon is combinatorially rigid, that is, if a simple polytope $P$ satisfies $$\beta^{-i,2j}(P) = \beta^{-i,2j}(\Delta^n \times G(m+2))$$ for all $i$ and $j$, then $P$ is combinatorially equivalent to $\Delta^n \times G(m+2)$.
    \end{theorem}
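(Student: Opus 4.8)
The plan is to translate everything to the dual simplicial complex $K$ of $P$---the nerve of the facets of $P$, equivalently the boundary of the dual simplicial polytope $P^\ast$---which is a polytopal sphere of dimension $D-1$ on $d$ vertices, where $D=\dim P$ and $d$ is the number of facets. A subset $\sigma$ of facets is a face of $K$ exactly when $\bigcap_{i\in\sigma}F_i\neq\emptyset$, so the minimal non-faces of $K$ are the minimal generators of the Stanley--Reisner ideal $I_P$. By Hochster's formula (Theorem~\ref{thm:Hocster}), written for $K$ in the standard form $\beta^{-i,2j}(P)=\sum_{|\sigma|=j}\dim\widetilde H_{j-i-1}(K_\sigma)$ with $K_\sigma$ the full subcomplex on $\sigma$, the hypothesis says that every full subcomplex of $K$ has the same reduced Betti profile as the matching subcomplex of $\Delta^n\times G(m+2)$. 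First I would extract the coarse data: the table in Example~\ref{example:simplex_x_polygon} has a unique maximal bidegree $(m+1,m+n+3)$, and since this must coincide with $(d-D,d)$ (the only bidegree with $j=d$, by Proposition~\ref{prop:betti_numbers}(1) and $K\simeq S^{D-1}$), we get $d=m+n+3$ and $D=n+2$. Reading the row $i=1$ then determines the minimal non-faces of $K$: exactly $\tfrac{(m+2)(m-1)}{2}$ of size two (the missing edges of the $1$-skeleton $G$ of $K$) and exactly one of size $n+1$, with none of any intermediate or larger size. The unique size-$(n+1)$ minimal non-face $W$ has $\widetilde H_{n-1}(K_W)=\Q$, which forces $K_W=\partial\Delta^n$; consequently $K=\{\tau:\tau\text{ is a clique of }G\text{ with }W\not\subseteq\tau\}$.

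The heart of the argument is to show that no missing edge meets $W$, i.e.\ that each vertex of $W$ has full degree in $G$. Granting this, the $m+2$ vertices $U:=V(K)\setminus W$ carry all the missing edges, so $G|_U$ has exactly $\binom{m+2}{2}-\tfrac{(m+2)(m-1)}{2}=m+2$ edges, and, since every vertex of $W$ is $G$-adjacent to everything else, one checks directly that $K$ splits as the join $\partial\Delta^n * K_U$, where $K_U$ is the flag complex of $G|_U$. I expect this full-degree claim to be the main obstacle, because the row $i=1$ fixes only the number and sizes of the minimal non-faces, not their incidence with $W$; the proof must therefore use the finer entries of the bigraded table through Hochster's formula together with the fact that $K$ is a genuine polytopal $(n+1)$-sphere (so each vertex link is an $n$-sphere and the $f$-vector is pinned down by the $h$-vector read off from the table). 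Concretely, I would assume a missing edge $\{w,x\}$ with $w\in W$, $x\notin W$, and force a contradiction by comparing the reduced homology of the full subcomplexes $K_{W\cup\{x\}}$ and $K_U$ against the prescribed values, chiefly $\beta^{-1,2(n+1)}(P)=1$ and $\beta^{-m,2(m+2)}(P)=1$.

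Once the join decomposition $K=\partial\Delta^n * K_U$ is in hand, the conclusion is immediate. As $K$ is a polytopal sphere of dimension $n+1$ and $\partial\Delta^n$ is a sphere of dimension $n-1$, the factor $K_U$ is a polytopal sphere of dimension $(n+1)-(n-1)-1=1$; but a simplicial $1$-sphere is precisely a cycle, and since $K_U$ has vertex set all of $U$ it is the boundary cycle $C_{m+2}$. Hence $K=\partial\Delta^n * C_{m+2}$, which is exactly the nerve of $\Delta^n\times G(m+2)$ (the join of the nerves of the two factors), and dualizing back shows $P$ is combinatorially equivalent to $\Delta^n\times G(m+2)$. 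Following Example~\ref{example:simplex_x_polygon} I would carry this out for $n\ge 2$ and $m\ge 2$; the degenerate parameter values---$n=1$, where $\Delta^1\times G(m+2)$ is a prism and $W$ degenerates to an ordinary missing edge, and $m=1$, where $G(3)=\Delta^2$ and $P$ is a product of two simplices---collide several bidegrees of the table and are treated separately by the same method with minor bookkeeping.
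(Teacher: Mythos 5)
Your translation to the dual complex, your extraction of the minimal non-face data from the row $i=1$ (exactly $\frac{(m+2)(m-1)}{2}$ missing edges plus a unique non-face $W$ of size $n+1$), and your endgame are all correct; indeed the endgame is slicker than the paper's, since once you have $K=\partial\Delta^n * K_U$ you can identify $K_U$ with the link of a facet of $\partial\Delta^n$, hence a polytopal $1$-sphere, and you never need the combinatorial invariance of Stanley--Reisner rings (\cite{BG}) that the paper invokes. The genuine gap is the step you yourself call ``the main obstacle'': showing that no missing edge meets $W$. That claim is the entire mathematical content of the theorem, and you do not prove it --- you only name a strategy, and the entries you propose to use, $\beta^{-1,2(n+1)}(P)=1$ and $\beta^{-m,2(m+2)}(P)=1$, are not the ones that can carry it. If $\{w,x\}$ is a missing edge with $w\in W$, $x\notin W$, the full subcomplex on $(W\setminus\{w\})\cup\{x\}$ can perfectly well be a full simplex, so $\beta^{-1,2(n+1)}=1$ detects nothing; and to make $\beta^{-m,2(m+2)}=1$ bite you would first need to know that $K_U$ itself has $\widetilde{H}_1\neq 0$, which your row-$1$ data does not give (in the paper this fact is exactly what Alexander duality supplies).

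The hole can be filled, but by the \emph{vanishing} entries of the table rather than the two $1$'s. The paper's route: (a) Alexander duality in $\partial P\cong S^{n+1}$ gives $\widetilde{H}_\ast(\bigcup_{F\in W^c}F)\cong \widetilde{H}_\ast(S^1)$, and Poincar\'e duality of bigraded Betti numbers (Proposition~\ref{prop:betti_numbers}(2)) together with $\beta^{-j,2(n+j)}(P)=0$ for $j>1$ shows no smaller union of facets has $H_1$, forcing the facets of $W^c$ to intersect in a cycle; then (b) the count $\beta^{-1,4}(P)=\frac{(m+2)(m-1)}{2}$ is already exhausted by the non-intersecting pairs inside $W^c$, so no missing edge touches $W$. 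A completion closer to your sketch also exists: by Mayer--Vietoris, a missing edge $\{w,x\}$ makes $K_{W\cup\{x\}}$ equal to $\partial\Delta^n$ with a cone attached over the (possibly empty) face spanned by the neighbours of $x$ in $W$, so $\widetilde{H}_{n-1}(K_{W\cup\{x\}})\neq 0$, contributing to $\beta^{-2,2(n+2)}(P)$ via Theorem~\ref{thm:Hocster}; this entry is $0$ in the table of Example~\ref{example:simplex_x_polygon} when $n\geq 2$, $m\geq 3$, giving the contradiction --- but note the collision $(2,n+2)=(m,m+2)$ when $n=m=2$, which again needs argument (a) or a separate treatment. Finally, your closing remark that the case $n=1$ follows ``by the same method with minor bookkeeping'' is too optimistic: there $W$ degenerates into one of many missing edges, all size information collapses, and every triangulated $2$-sphere on $m+4$ vertices has the right edge count; this is why the paper disposes of prisms by citing a separate rigidity theorem (\cite{CK11}), and of $m\leq 2$ by rigidity of products of simplices (\cite{ch-pa-su10}).
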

    \begin{proof}
        When $m\leq 2$, $G(m+2)$ is either $\Delta^2$ or $(\Delta^1)^2$. Since the product of simplices is combinatorially rigid by \cite{ch-pa-su10}, the assertion is true.

        When $n=1$, $\Delta^1 \times G(m+2)$ is an $(m+2)$-gonal prism. From \cite{CK11}, it is also known that any prism is combinatorially rigid. Therefore, the assertion is also true for this case.

        Now, assume that $m>2$ and $n>1$.
        Since the bigraded Betti numbers determine the dimension and the number of facets of the polytope, from the assumption, $P$
        is a simple polytope of dimension $n+2$ with $m+n+3$ facets and, by Example~\ref{example:simplex_x_polygon},
        $$ \beta^{-1,2j}(P) = \left\{
                          \begin{array}{ll}
                            1, & \hbox{if $j=n+1$;} \\
                            \frac{(m+2)(m-1)}{2}, & \hbox{if $j=2$;} \\
                            0, & \hbox{otherwise.}
                          \end{array}
                        \right.
        $$ Since $\beta^{-1,2(n+1)}(P)=1$, there is a set $W$ of $n+1$ facets that satisfies $$\dim_\Q \widetilde{H}_{n-1}(\bigcup_{F \in W} F)=1.$$ Such a set is unique, and we can put $W=\{F_{m+3}, \ldots, F_{m+n+3}\} \subset \mathcal{F}(P)$.
        It follows from Alexander duality that the complement $W^c\subset \mathcal{F}(P)$, say $W^c=\{F_1,\ldots,F_{m+2}\}$, has the same homology groups as the circle $S^1$. Hence, we have
        $$\dim_\Q \widetilde{H}_1(\bigcup_{F\in{W^c}}F)=1.$$
        Moreover, since $\beta^{-j,2(n+j)}(P)=0$ for all $j>1$, by Proposition~\ref{prop:betti_numbers}~(2), we have  $\beta^{-(m-j+1),2(m-j+3)}(P)=0$, that is, any union of $m-j+3$ facets cannot be homotopy equivalent to $S^1$.
        In other words, by an appropriate re-indexing of facets of $W^c$, $F_i$ intersects with exactly two facets $F_{i-1}$ and $F_{i+1}$ for all $i=1, \ldots, m+2$, where the indices are up to modulo $m+2$.

        We note that $\beta^{-1,2j}(P)$ is the number of monomial generators of degree $j$ for $I_P$, the Stanley-Reisner ideal of $P$.
        Since
        \begin{equation*}
            \begin{split}
            \beta^{-1,4}(P) &= \sum_{|V|=2, V \subset \mathcal{F}(P)} \widetilde{H}_0(\bigcup_{F \in V} F)\\
            &\geq \sum_{|V|=2, V \subset W^c} \widetilde{H}_0(\bigcup_{F \in V} F)\\
            &=\frac{(m+2)(m-1)}{2}=\beta^{-1,4}(P),\\
            \end{split}
        \end{equation*}
        we can deduce that there are no generators for $I_P$ that are divisible by $x_k x_{m+2+j}$ for some $1\leq k \leq m+2$ and $1\leq j \leq n+1$, where $x_i$ is the indeterminate corresponding to $F_i$. Hence, $\Q(P)$ is decomposed as
        $$
        \Q(P) = \Q[x_1, \ldots, x_{m+2}]/I \otimes \Q[x_{m+3}, \ldots, x_{m+n+3}]/\langle x_{m+3} \cdots x_{m+n+3}\rangle,
        $$
        where $I$ is the ideal generated by $x_i x_j$'s with $j \not\equiv i \pm 1 (\text{mod }m+2)$.
        Hence,
        $$\Q(P) = \Q(G(m+2)) \otimes \Q(\Delta^n) = \Q(\Delta^n \times G(m+2)).$$
        Since the face ring completely determines the combinatorial type of the polytope by \cite{BG}, $P$ is combinatorially equivalent to $\Delta^n \times G(m+2)$.
    \end{proof}

    \begin{corollary}\label{cor:orbit space of P(E)}
        Let $P(E)$ be a projective bundle over a $4$-dimensional quasitoric manifold $B$. If the cohomology ring of a quasitoric manifold $M$ is isomorphic to that of $P(E)$, then the orbit space of $M$ is $\Delta^n\times G(\beta_2(B)+2)$.
%        If the cohomology ring of a quasitoric manifold $M$ is isomorphic to that of a projective bundle $P(E)$, then the orbit space of $M$ is combinatorially equivalent to that of $P(E)$. More precisely, if $P(E)$ is a $\CP^n$-bundle over a $4$-dimensional quasitoric manifold $B$, then the orbit space of $M$ is $\Delta^n \times G(\beta_2(B)+2)$.
    \end{corollary}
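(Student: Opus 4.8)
The plan is to deduce the corollary directly from Theorem~\ref{thm:rigid polytope} together with the Choi--Panov--Suh invariance of bigraded Betti numbers under cohomology ring isomorphisms. Write $m := \beta_2(B)$ and let $Q$ denote the simple polytope that is the orbit space of the quasitoric manifold $M$, so that $M$ is a quasitoric manifold over $Q$.

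First I would observe that $P(E)$ is itself a quasitoric manifold whose orbit space is $\Delta^n \times G(m+2)$: this is exactly the content of Proposition~\ref{prop:char_ftn_projective_bundle} (and the corollary following it, which covers an arbitrary omnioriented $4$-dimensional quasitoric base $B$), producing a characteristic matrix of the shape~\eqref{eqn:char. matrix of proj. bundle over a toric mfd 1} on the polytope $\Delta^n \times G(m+2)$. Thus both $M$ and $P(E)$ are quasitoric manifolds, over $Q$ and over $\Delta^n \times G(m+2)$ respectively, and by hypothesis $H^\ast(M) \cong H^\ast(P(E))$ as graded rings.

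The second step is to invoke the result of Choi--Panov--Suh \cite{ch-pa-su10} recalled in Section~\ref{sec:polytope rigidity}: a graded ring isomorphism between the cohomology rings of two quasitoric manifolds forces the bigraded Betti numbers of their orbit polytopes to coincide. Hence $\beta^{-i,2j}(Q) = \beta^{-i,2j}(\Delta^n \times G(m+2))$ for all $i$ and $j$. Now Theorem~\ref{thm:rigid polytope} applies verbatim and yields that $Q$ is combinatorially equivalent to $\Delta^n \times G(m+2) = \Delta^n \times G(\beta_2(B)+2)$, which is precisely the assertion.

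There is no genuine obstacle here: once Theorem~\ref{thm:rigid polytope} is available, the corollary is immediate. The only points that require a moment's care are purely bookkeeping — identifying the parameter of the polygon factor as $\beta_2(B)$ rather than $\beta_2(P(E)) = \beta_2(B)+1$, and noting that the low-dimensional ranges $n \le 1$ or $m \le 2$ are already absorbed into the case analysis inside the proof of Theorem~\ref{thm:rigid polytope}.
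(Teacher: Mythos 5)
Your proposal is correct and is exactly the argument the paper intends (the corollary is stated without proof precisely because it follows immediately from Theorem~\ref{thm:rigid polytope} combined with the Choi--Panov--Suh invariance of bigraded Betti numbers and the identification of the orbit space of $P(E)$ as $\Delta^n\times G(\beta_2(B)+2)$). Nothing to add.
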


\section{Cohomology determines an equivariant bundle structure}\label{sec:cohomology determines}

    We remark that a (generalized) Bott manifold admits an iterated equivariant bundle structure.
    It was shown in~\cite{CPS12} that when the cohomology ring of a quasitoric manifold $M$ is isomorphic to that of a two-stage generalized Bott manifold $B$, the quasitoric manifold is homeomorphic to $B$. Furthermore, if the dimension of fiber is not equal to $1$, then $M$ also admits an equivariant bundle structure. On the other hand, it was shown in \cite{CS11} that if $H^\ast(M)$ is isomorphic to the cohomology ring of some Bott manifold, then $M$ should admit an iterated equivariant bundle structure to be a Bott manifold. Hence, we conclude that the cohomology ring of a quasitoric manifold should have information of an equivaraint bundle structure.

    In this section, we claim that the cohomology ring of a non-singular projective toric variety determines whether it admits a projective bundle structure over a \smoothcompact toric surface or not.

    \begin{lemma}\label{thm:quasitoric}
        Let $B$ be a $4$-dimensional quasitoric manifold and let $P(E)$ be a projective bundle over $B$ with fiber $\CP^n$. Let $M$ be a quasitoric manifold whose cohomology ring is isomorphic to $H^\ast(P(E))$. Assume
        $n\geq2$ or $\beta_2(B) \geq 3$. Then, $M$ is equivalent to a projective bundle over a $4$-dimensional quasitoric manifold.
    \end{lemma}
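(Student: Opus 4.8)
The plan is to fix the combinatorial type of $M$ first, and then read off from the ring isomorphism that its characteristic matrix can be brought into the bundle form of Lemma~\ref{lem:chr_eq_bdl}. Since $H^\ast(M)\cong H^\ast(P(E))$, Corollary~\ref{cor:orbit space of P(E)} gives that the orbit space of $M$ is $\Delta^n\times G(m+2)$ with $m=\beta_2(B)$, and the hypothesis $n\geq 2$ or $m\geq 3$ guarantees that this product decomposition is combinatorially canonical, so that the $n+1$ simplex facets and the $m+2$ polygon facets are intrinsically distinguished (this fails precisely in the excluded case $n=1$, $m=2$, where $\Delta^1\times G(4)=(\Delta^1)^3$ is a symmetric cube). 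Up to equivalence I may therefore write the characteristic matrix of $M$ in the product block form \eqref{char},
$$\Lambda_M=\left(\begin{array}{cc|c|c} E_n & O & \mathbf{p} & Q\\ O & E_2 & \mathbf{r} & R\end{array}\right),$$
where the column $(\mathbf{p},\mathbf{r})$ corresponds to the extra simplex facet and $(Q,R)$ to the $m$ extra polygon facets. By Lemma~\ref{lem:chr_eq_bdl} together with the corollary following Proposition~\ref{prop:char_ftn_projective_bundle}, $M$ is equivalent to a projective bundle over a $4$-dimensional quasitoric manifold exactly when the coupling block $\mathbf{r}\in\Z^2$ vanishes; and a direct check shows $\mathbf{r}=0$ if and only if the $n+1$ characteristic vectors of the simplex facets span a rank-$n$ direct summand of $\Z^{n+2}$, a quantity which, by the previous remark, is intrinsic to $M$. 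Thus the lemma reduces to showing that $\mathbf{r}=0$ is forced by the ring isomorphism.

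Next I would use that the cohomology of a quasitoric manifold is generated in degree $2$, so the graded isomorphism $H^\ast(M)\cong H^\ast(P(E))$ is induced by a linear isomorphism $H^2(M)\to H^2(P(E))$ that carries the full ideal of relations of one presentation onto that of the other. Eliminating the identity blocks of $\Lambda_M$ via $J_{\blambda}$, I take the class $x_0$ of the extra simplex facet and the classes $y_1,\dots,y_m$ of the extra polygon facets as generators; then the simplex non-face relation $v_1\cdots v_{n+1}=0$ from $I_P$ becomes a Bott-type relation $x_0\prod_{i=1}^n(x_0+\sum_j a_{ij}y_j)=0$ that does not involve $\mathbf{r}$, while $\mathbf{r}$ enters only through the polygon relations $w_iw_j=0$ for non-adjacent $i,j$. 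After substituting the two eliminated classes $w_1,w_2$, which carry the $\mathbf{r}$-terms, these polygon relations produce elements of the relation ideal that bind $x_0$ to the $y_k$ precisely when $\mathbf{r}\neq 0$.

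The contradiction I am after should come from the free-module structure: $H^\ast(P(E))$ is free over $H^\ast(B)$ on $1,x_0,\dots,x_0^n$, so the only relation binding the fiber class $x_0$ to the base is the single Bott-type relation of degree $2(n+1)$. A nonzero $\mathbf{r}$ instead makes the polygon relations bind $x_0$ to the base classes, and comparing the bidegrees and the precise shape of the minimal generators of the two relation ideals should leave $\mathbf{r}=0$ as the only possibility. Carrying this out is a finite computation that branches on $n$ and $m$, and the hypothesis $n\geq 2$ or $m\geq 3$ is exactly what prevents the extra $x_0$-couplings from being absorbed into the permitted relations: either the fiber relation sits in high degree $2(n+1)\geq 6$ ($n\geq 2$), or the polygon is genuine and contributes enough independent quadratic relations to isolate the coupling ($m\geq 3$), whereas the count alone — the fixed invariant $\beta^{-1,4}$ — is never enough by itself. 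Once $\mathbf{r}=0$ is established, $\Lambda_M$ has the bundle form of Lemma~\ref{lem:chr_eq_bdl}, and reading off $a_{ij}=Q_{ij}$ exhibits $M$ as $P(\uC\oplus\bigoplus_{i}\bigotimes_j\gamma_j^{a_{ij}})$ over the $4$-dimensional base $M(G(m+2),(E_2\mid R))$, as in Proposition~\ref{prop:char_ftn_projective_bundle}.

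The hard part is that the ring isomorphism is only abstract: the induced linear map on $H^2$ mixes $x_0$ with the $y_k$, so ``involving $x_0$'' is not a priori preserved, and the degree comparison above must be recast as an isomorphism-invariant statement about the relation ideal. Concretely, the obstacle is to prove that the rank of the simplex characteristic vectors — equivalently the vanishing of $\mathbf{r}$ — is a cohomological invariant, ruling out that some twisted matrix with $\mathbf{r}\neq 0$ yields a ring abstractly isomorphic to a projective-bundle ring. This is where I expect the genuine work to lie, and where the hypotheses $n\geq 2$ or $\beta_2(B)\geq 3$ enter decisively, by providing the room (a high-degree fiber relation, or a non-degenerate polygon factor) needed for the invariant comparison to have $\mathbf{r}=0$ as its unique solution.
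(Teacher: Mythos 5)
Your setup coincides with the paper's: by Corollary~\ref{cor:orbit space of P(E)} the orbit space of $M$ is $\Delta^n\times G(m+2)$, the characteristic matrix is normalized to a block form whose only unknown coupling is a column ending in $(-b,-c)^t$, and by Lemma~\ref{lem:chr_eq_bdl} together with Proposition~\ref{prop:char_ftn_projective_bundle} the conclusion is equivalent to $b=c=0$. But the entire content of the lemma is the step you defer to the end as ``where I expect the genuine work to lie'': proving that an abstract graded isomorphism $H^\ast(M)\cong H^\ast(P(E))$ forces $b=c=0$. Worse, the mechanism you propose for it --- ``comparing the bidegrees and the precise shape of the minimal generators of the two relation ideals'' --- cannot succeed even in principle, because the two ideals have identical degree data: both are generated by the same Stanley--Reisner quadrics of $\Delta^n\times G(m+2)$ twisted by linear forms, plus one generator of degree $n+1$; they differ only in integer coefficients. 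What the paper actually does is a coefficient analysis: it lifts $\psi$ to $\bpsi$ with unimodular matrix $\mathbf{P}=[P_i^j]$ and proves $P_k^0=0$ for $k=1,\ldots,m$. For $n\geq 2$ (and $m\geq2$) this is because the quadratic generators $x_k(bx_0+\sum_j\Lambda_1^jx_j)$ must map into the quadratic part of $\mathcal{I}'$, which contains no $y_0$; if some $P_k^0\neq0$, every column of $\mathbf{P}$ would be orthogonal to $(b,\Lambda_1^1,\ldots,\Lambda_1^m)$, contradicting invertibility. For $n=1$, $m\geq3$ a separate argument using the non-adjacency relations $x_ix_j$ shows the $P_i^0$ are all zero or all nonzero, and the nonzero case makes four rows of $\mathbf{P}$ linearly dependent. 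Only then does $P_0^0=\pm1$ together with the absence of $y_0$ in quadratic elements of $\mathcal{I}'$ yield $b=c=0$. None of this appears in your proposal, so it is a genuine gap, not a sketch of a complete argument.

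Two further inaccuracies. First, your claim that the simplex and polygon facets are intrinsically distinguished whenever $n\geq2$ or $m\geq3$, failing ``precisely'' at $(n,m)=(1,2)$, is false: for $n=2$, $m=1$ the polytope is $\Delta^2\times G(3)=\Delta^2\times\Delta^2$, whose two factors are interchangeable. This case lies inside the hypothesis $n\geq2$, and the paper must (and does) treat $m=1$ separately by invoking the classification of quasitoric manifolds cohomologically equivalent to two-stage generalized Bott manifolds from \cite{CPS12}; your argument as written breaks there. Second, Example~\ref{eg:CP2sharpCP2} shows that for $(n,m)=(1,2)$ a bundle ring can be isomorphic to a non-bundle ring, so the vanishing of the coupling is genuinely not a soft invariant of the ring presentation: any correct proof must consume the hypotheses inside the integer computation, exactly as the paper's two-case analysis does.
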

    \begin{proof}
        Let $\beta_2(B)=m$ and $H^\ast(B)=\Z[y_1,\ldots,y_m]/\mathcal{I}_B,$ where $\mathcal{I}_B$ is the ideal generated by $y_ky_\ell$ for $k\neq\ell\pm1$, $y_i({\Lambda^\prime}_1^1y_1+{\Lambda^\prime}_1^2y_2+\cdots+{\Lambda^\prime}_1^my_m)$ for $i=1,\ldots,m-1$, and $y_j({\Lambda^\prime}_2^1y_1+{\Lambda^\prime}_2^2y_2+\cdots+{\Lambda^\prime}_2^my_m)$ for $j=2,\ldots,m$.
        By~\eqref{eq:cohom_quasi_bundle}, we may assume that
        \begin{equation*}
            \begin{split}
                H^\ast(P(E))&=H^\ast(B)[y_0]\left/y_0\prod_{i=1}^n\left(y_0+\sum_{j=1}^ma^\prime_{ij}y_j\right)\right.\\
                &=\Z[y_0,y_1,\ldots,y_m]/\mathcal{I}',
            \end{split}
        \end{equation*}
        where $\mathcal{I}'$ is the ideal generated by a monomial
        \begin{equation}\label{eqn:monomial y0}
            y_0\prod_{i=1}^n\left(y_0+\sum_{j=1}^m a^\prime_{ij}y_j\right)
        \end{equation} and the ideal $\mathcal{I}_B$.

        By Corollary~\ref{cor:orbit space of P(E)}, the orbit space of $M$ is $\Delta^n\times G(m+2)$. Thus, we may assume that
        the characteristic matrix of $M$ is of the form
        \begin{equation}\label{eqn:char. matrix of M}
            \Lambda=\left(
            \begin{array}{ccc|cc|c|ccc}
                1 & \cdots & 0 & 0 & 0 & -1 & -a_{1,1} & \cdots & -a_{1,m} \\
                \vdots & \ddots & \vdots & \vdots & \vdots & \vdots & \vdots & \ddots & \vdots \\
                0 & \cdots & 1 & 0 & 0 & -1 & -a_{n,1} & \cdots & -a_{n,m} \\
                \hline
                0 & \cdots & 0 & 1 & 0 & -b & -\Lambda_1^1 & \cdots & -\Lambda_1^m \\
                0 & \cdots & 0 & 0 & 1 & -c & -\Lambda_2^1 & \cdots & -\Lambda_2^m
            \end{array}
            \right),
        \end{equation}
        up to equivalence\footnote{From the non-singularity condition, we obtain a column vector $(\pm1,\ldots,\pm1,-b,-c)^t$ for the $(n+3)$-th column of~\eqref{eqn:char. matrix of M}. Since the matrix that changes the row of~\eqref{eqn:char. matrix of M} is an element of $GL_{n+2}(\Z)$ and a characteristic function is determined up to sign, we can change all $+1$'s in the $(n+3)$-th column to $-1$ up to equivalence.}. Then, the cohomology ring of $M$ is
        $$H^\ast(M)=\Z[x_0,x_1,\cdots,x_m]/\mathcal{I},$$ where $\mathcal{I}$ is the ideal generated by polynomials
        $$
        \begin{array}{cl}
        x_0 \prod_{i=1}^n (x_0+\sum_{j=1}^m a_{i,j} x_j), & \\
        x_i(bx_0+\Lambda_1^1x_1+\Lambda_1^2x_2+\cdots+\Lambda_1^mx_m) & \text{for $i=1,\ldots,m-1$,} \\
        x_j(cx_0+\Lambda_2^1x_1+\Lambda_2^2x_2+\cdots+\Lambda_2^mx_m) & \text{for $j=2,\ldots,m$, and} \\
        x_kx_\ell & \text{for $k\neq\ell\pm 1$.}
        \end{array}$$ Furthermore, the matrix
        \begin{equation*}
            \Lambda_N=\begin{pmatrix}
                1 & 0 & -\Lambda_1^1 & \cdots & -\Lambda_1^m \\
                0 & 1 & -\Lambda_2^1 & \cdots & -\Lambda_2^m
            \end{pmatrix}
        \end{equation*} is a characteristic matrix on $G(m+2)$ by Lemma~\ref{block_char}.

        Note that if $b=c=0$, then the characteristic matrix of $M$ is of the form~\eqref{eqn:char. matrix of proj. bundle over a toric mfd 1}, but $M(G(m+2),\Lambda_N)$ is just a $4$-dimensional quasitoric manifold (not necessarily a \smoothcompact toric surface in general), and then, $M$ is equivalent to a quasitoric manifold that is an equivariant bundle over a $4$-dimensional quasitoric manifold with the fiber space $\CP^n$ by Lemma~\ref{lem:chr_eq_bdl}.

        From the hypothesis, there is a graded ring isomorphism $\psi \colon H^\ast(M)\to H^\ast(P(E))$. Then, $\psi$ lifts to a grading preserving isomorphism $$\bpsi\colon\Z[x_0,x_1,\ldots,x_m]\to\Z[y_0,y_1,\ldots,y_m]$$ with $\bpsi(\mathcal{I})=\mathcal{I}'$. Then, we can obtain a matrix $\mathbf{P}=[P_i^j]$, $i=0,1,\ldots,m$ and $j=0,1,\ldots,m$, such that
        \begin{equation*}
            \begin{pmatrix}
                \bpsi(x_0)\\
                \bpsi(x_1)\\
                \vdots\\
                \bpsi(x_m)
            \end{pmatrix}
            =\begin{pmatrix}
                P_0^0&P_0^1&\cdots&P_0^m\\
                P_1^0&P_1^1&\cdots&P_1^m\\
                \vdots&\vdots&\ddots&\vdots\\
                P_m^0&P_m^1&\cdots&P_m^m
            \end{pmatrix}
            \begin{pmatrix}
                y_0\\
                y_1\\
                \vdots\\
                y_m
            \end{pmatrix}
        \end{equation*}
        and $\det(\mathbf{P})=\pm 1$. We consider two cases (1) $n\geq 2$ and (2) $n=1$, separately.

        \textbf{Case 1: $n\geq 2$.}
        If $m=1$, then $P(E)$ is a two-stage generalized Bott manifold. Hence, $M$ is equivalent to a two-stage generalized Bott manifold by \cite{CPS12}. Now, assume that $m\geq 2$. For each $k=1,\ldots,m-1$, since $x_k(bx_0+\Lambda_1^1x_1+\Lambda_1^2x_2+\cdots+\Lambda_1^mx_m)$ is quadratic and $y_0\prod_{i=1}^n(y_0+\sum_{j=1}^m a^\prime_{ij}y_j)$ is not quadratic, $y_0\prod_{i=1}^n(y_0+\sum_{j=1}^m a^\prime_{ij}y_j)$ does not appear in $\bpsi(x_k(bx_0+\Lambda_1^1x_1+\Lambda_1^2x_2+\cdots+\Lambda_1^mx_m))\in\mathcal{I}'$ as a component of its linear combination. Since
        \begin{equation*}
            \begin{split}
                &\bpsi(x_k(bx_0+\Lambda_1^1x_1+\Lambda_1^2x_2+\cdots+\Lambda_1^mx_m))\\
                &\qquad=(P^0_ky_0+\cdots+P^m_ky_m)\left\{(bP^0_0+\Lambda_1^1 P^0_1 +\cdots+\Lambda_1^mP^0_m)y_0+\right.\\
                &\qquad\qquad\qquad \left.\cdots+(bP^m_0+\Lambda_1^1 P^m_1 +\cdots+\Lambda_1^mP^m_m)y_m\right\},
            \end{split}
        \end{equation*}
        we have
        \begin{equation*}
            P_k^0(bP_0^0+\cdots+\Lambda_1^mP_m^0)=0,
        \end{equation*}
        and
        \begin{equation*}
            P_k^0(bP_0^\ell +\Lambda_1^1 P^\ell_1 +\cdots+\Lambda_1^mP_m^\ell)+ P_k^\ell(bP_0^0+\Lambda_1^1 P^0_1 +\cdots+\Lambda_1^mP_m^0)=0
        \end{equation*}
        for $\ell=1,\ldots,m$. If $P_k^0\neq 0$ for some $k=1,\ldots,m-1$, then we can see that
        \begin{equation*}
            bP_0^\ell+\Lambda_1^1 P^\ell_1 +\cdots+\Lambda_1^mP_m^\ell=0
        \end{equation*}
        for all $\ell=0,1,\ldots,m$. Then, all columns of $\mathbf{P}$ are orthogonal to $(b,\Lambda_1^1,\ldots,\Lambda_1^m)$ in $\R^{m+1}$. Since $(b,\Lambda_1^1,\ldots,\Lambda_1^m)\neq\mathbf{0}$, the rank of $\mathbf{P}$ is at most $m$ which contradicts that $P$ is invertible.
        Hence, $P_k^0=0$ for $k=1,\ldots,m-1$. Similarly, since $x_m(cx_0+\Lambda_2^1x_1+\Lambda_2^2x_2+\cdots+\Lambda_2^mx_m)$ is also quadratic, we obtain $P_m^0=0$. Therefore,
        $P_k^0=0$ for all $k=1,\ldots,m$. Note that $y_0\prod_{i=1}^n(y_0+\sum_{j=1}^m a^\prime_{ij}y_j)$ is only one generator containing $y_0$ among the generators of $\mathcal{I}'$. This implies that  $b=c=0$.

        \textbf{Case 2: $n=1$ and $m\geq 3$.}
        If $F_i^2\cap F_j^2=\emptyset$, then $x_ix_j\in\mathcal{I}$ and
        we have $$\bpsi(x_ix_j)=(P_i^0y_0+P_i^1y_1+\cdots+P_i^my_m)(P_j^0y_0+P_j^1y_1+\cdots+P_j^my_m).$$ Note that if a quadratic element in $\mathcal{I}'$ contains a term divisible by $y_0$, the exponent of $y_0$ of the term must be equal to $2$. Hence,
        we can see that $\bpsi(x_ix_j)\in\mathcal{I}'$ only if either $P_i^0=P_j^0=0$ or $P_i^0P_j^0\neq 0$. Hence, by considering the elements $x_1 x_3, x_1 x_4, \ldots, x_1x_m$ in $\mathcal{I}$, the entries $P_1^0,P_3^0,\ldots,P_m^0$ are either all zero or all nonzero. Since $m\geq 3$, the four polynomials $x_1(bx_0+\Lambda_1^1x_1+\cdots+\Lambda_1^mx_m)$, $x_2(bx_0+\Lambda_1^1x_1+\cdots+\Lambda_1^mx_m)$, $x_2(cx_0+\Lambda_2^1x_1+\cdots+\Lambda_2^mx_m)$, and $x_m(cx_0+\Lambda_2^1x_1+\cdots+\Lambda_2^mx_m)$ belong to $\mathcal{I}$. Therefore, similarly, their images via $\bpsi$ belong to $\mathcal{I}'$ only if $P_1^0$, $P_2^0$, $P_m^0$, $bP_0^0+\Lambda_1^1P_1^0+\cdots+\Lambda_1^mP_m^0$, and $cP_0^0+\Lambda_2^1P_1^0+\cdots+\Lambda_2^mP_m^0$ are either all zero or all nonzero.
        One can observe that $P_1^0,\ldots,P_m^0, bP_0^0+\Lambda_1^1P_1^0+\cdots+\Lambda_1^mP_m^0$ and $cP_0^0+\Lambda_2^1P_1^0+\cdots+\Lambda_2^mP_m^0$ are either all zero or all nonzero.

        Suppose that $P_1^0,\ldots,P_m^0, bP_0^0+\Lambda_1^1P_1^0+\cdots+\Lambda_1^mP_m^0$, and $cP_0^0+\Lambda_2^1P_1^0+\cdots+\Lambda_2^mP_m^0$ are nonzero. For the pair $(i,j)$ with $j\neq i \pm 1$, we have
        \begin{equation}\label{eqn1:identity for x_ix_j}
            \begin{split}
            \bpsi(x_ix_j)&=P_i^0P_j^0y_0^2+\sum_{\ell=1}^m(P_i^\ell P_j^0+P_i^0P_j^\ell)y_0y_\ell\\
            &\qquad+(P_i^1y_1+\cdots+P_i^my_m)(P_j^1y_1+\cdots+P_j^my_m).
            \end{split}
        \end{equation}
        Since $\bpsi(x_1x_j)\in\mathcal{I}'$ for $j=3,\ldots,m$, from \eqref{eqn:monomial y0} and \eqref{eqn1:identity for x_ix_j},
        we obtain
        $$P_1^0P_j^0a_{1,\ell}^\prime=P_1^\ell P_j^0+P_1^0P_j^\ell$$ for $\ell=1,\ldots,m$. Since $P_1^0\neq0$,
        we can see that
        \begin{equation}\label{eqn:Pjl}
            P_j^\ell=\left(a_{1,\ell}'-\frac{P_1^\ell}{P_1^0}\right)P_j^0
        \end{equation} for $\ell=1,\ldots,m$.
        Since $\bpsi(x_1(bx_0+\Lambda_1^1x_1+\cdots+\Lambda_1^mx_m))\in\mathcal{I}'$, we also obtain
        \begin{equation*}
            \begin{split}
                &P_1^0(bP_0^0+\Lambda_1^1P_1^0+\cdots+\Lambda_1^mP_m^0)a_{1,\ell}^\prime\\
                &\qquad=P_1^\ell (bP_0^0+\Lambda_1^1P_1^0+\cdots+\Lambda_1^mP_m^0)+P_1^0(bP_0^\ell+\Lambda_1^1P_1^\ell+\cdots+\Lambda_1^mP_m^\ell),
            \end{split}
        \end{equation*} and hence, we can see that
        \begin{equation}\label{eqn1:P_1^b and sum of blabla}
            bP_0^\ell+\Lambda_1^1P_1^\ell+\cdots+\Lambda_1^mP_m^\ell = \left(a_{1,\ell}^\prime-\frac{P_1^\ell}{P_1^0} \right)(bP_0^0+\Lambda_1^1P_1^0+\cdots+\Lambda_1^mP_m^0).
        \end{equation} Substituting $P_j^\ell=\left(a_{1,\ell}'-\frac{P_1^\ell}{P_1^0}\right)P_j^0$ into~\eqref{eqn1:P_1^b and sum of blabla} for $j=3,\ldots,m$,
        we have
        \begin{equation}\label{eqn:P012}
            bP_0^\ell+\Lambda_1^1P_1^\ell+\Lambda_1^2P_2^\ell = \left(a_{1,\ell}'-\frac{P_1^\ell}{P_1^0}\right)(bP_0^0+\Lambda_1^1P_1^0+\Lambda_1^2P_2^0)
        \end{equation}
        for $\ell=1,\ldots,m$.

        From \eqref{eqn:Pjl} and \eqref{eqn:P012}, we have $$P_3^0(b\mathbf{P}_0+\Lambda_1^1\mathbf{P}_1+\Lambda_1^2\mathbf{P}_2) =(bP_0^0+\Lambda_1^1P_1^0+\Lambda_1^2P_2^0)\mathbf{P}_3,$$ where $\mathbf{P}_i$ is the $i$-th row vector of $\mathbf{P}$. This implies that the first four row vectors of the matrix $\mathbf{P}$
        are linearly dependent, which contradicts the claim that $\psi$ is an isomorphism. Therefore, $P_i^0=0$ for $i=1,\ldots,m$ and $P_0^0\neq 0$. At the same time, both $bP_0^0+\Lambda_1^1P_1^0+\cdots+\Lambda_1^mP_m^0$ and $cP_0^0+\Lambda_2^1P_1^0+\cdots+\Lambda_2^mP_m^0$ are zero. Therefore, $b=c=0$, and hence, By Lemma~\ref{lem:chr_eq_bdl}, $M$ is equivalent to some equivariant bundle over $B'$, where $B'$ is a $4$-dimensional quasitoric manifold whose characteristic function is
        $$
            \left(
            \begin{array}{ccccc}
              1 & 0 & -\Lambda_1^1 & \cdots & -\Lambda_1^m \\
              0 & 1 & -\Lambda_2^1 & \cdots & -\Lambda_2^m \\
            \end{array}
          \right).
        $$
        Note that, by Proposition~\ref{prop:char_ftn_projective_bundle}, there is a projective bundle over $B'$ whose characteristic matrix is of the form \eqref{eqn:char. matrix of M} with $b=c=0$. Since a quasitoric manifold is determined by its characteristic function up to equivalence, we conclude that $M$ is equivalent to a projective bundle $P(E')$ over $B'$.

    \end{proof}

    The following examples show that when $n=1$ and $\beta_2(B)\leq 2$, there exist quasitoric manifolds whose cohomology rings are isomorphic to $H^\ast(P(E))$, but which cannot admit an equivariant bundle structure.

    \begin{example}
        Let $M$ be a quasitoric manifold over $\Delta^1\times\Delta^2 = \Delta^1 \times G(3)$ with its characterstic matrix
        \begin{equation*}
            \begin{pmatrix}
                1&0&0&-1&-2\\
                0&1&0&0&-1\\
                0&0&1&0&-1
            \end{pmatrix}.
        \end{equation*}
        Then $M$ is an equivariant bundle whose base is $\CP^2$ and fiber $\CP^1$. Let $N$ be a quasitoric manifold over a cube $\Delta^1 \times \Delta^2$ with its characteristic matrix
       \begin{equation*}
            \begin{pmatrix}
                1&0&0&-1&-2\\
                0&1&0&-1&-1\\
                0&0&1&-1&-1
            \end{pmatrix}
        \end{equation*}
        which does not admit an equivariant bundle structure. However, $M$ and $N$ are homeomorphic which implies that they have the isomorphic cohomology rings (see~\cite{CPS12} for further details).
    \end{example}

    \begin{example} \label{eg:CP2sharpCP2}
        Let $M$ be a quasitoric manifold over a cube
        $(\Delta^1)^3 = \Delta^1 \times G(4)$ with its characteristic matrix
        \begin{equation*}
            \begin{pmatrix}
                1&0&0&-1&-1&-1\\
                0&1&0&0&-1&-2\\
                0&0&1&0&-1&-1
            \end{pmatrix}.
        \end{equation*}
        Then $M$ is an equivariant bundle whose base is $\CP^2\#\CP^2$ and fiber $\CP^1$.
        Now let $N$ be a quasitoric manifold over a cube $(\Delta^1)^3 $ with its characteristic matrix
        \begin{equation*}
            \begin{pmatrix}
                1&0&0&-1&-1&-1\\
                0&1&0&0&-1&-2\\
                0&0&1&-2&-1&-1
            \end{pmatrix}.
        \end{equation*}
        Then $N$ cannot have an equivariant bundle structure.

        Let us compute the cohomology rings of $M$ and $N$:
        \begin{equation*}
        H^\ast(M)=\Z[x,y,z]\left/\left\langle x(x+y+z),y(y+2z),z(y+z)\right\rangle\right.
        \end{equation*} and
        \begin{equation*}
            H^\ast(N)=\Z[X,Y,Z]\left/\left\langle X(X+Y+Z),Y(Y+2Z),Z(2X+Y+Z)\right\rangle\right..
        \end{equation*} Then the map $\phi\colon H^\ast(M)\to H^\ast(N)$ defined by $\phi(x)=-X-Y-Z$, $\phi(y)=2X+Y+2Z$ and $\phi(z)=-Z$ is a graded ring isomorphism.
    \end{example}

    \begin{remark}
         By using the smooth classification of $6$-dimensional simply connected closed manifolds due to Wall and Jupp (\cite{Wall}, \cite{Jupp}), one can see that the quasitoric manifold $N$ in each above example is diffeomorphic to $M$. This implies that $N$ admits a non-equivariant bundle structure.
    \end{remark}

    \begin{theorem}[Theorem~\ref{main1}]\label{cor:proj bld}
        A \projectivetoricmanifold is equivalent to a projective bundle over a \smoothcompact toric surface if and only if its integral cohomology ring is isomorphic to that of some projective bundle over a \smoothcompact toric surface as graded rings.
    \end{theorem}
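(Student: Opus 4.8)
The plan is to prove only the nontrivial (``if'') direction, the converse being immediate: a projective bundle over a \smoothcompact toric surface has integral cohomology ring isomorphic to that of a projective bundle over such a surface (namely itself). So I would start from a \projectivetoricmanifold $M$ with $H^\ast(M)\cong H^\ast(P(E))$ as graded rings, where $P(E)$ is a projective bundle over a \smoothcompact toric surface $S$ with fiber $\CP^n$ and $m:=\beta_2(S)$, and aim to realize $M$ itself as such a bundle. By Corollary~\ref{cor:orbit space of P(E)} the orbit space of $M$ is $\Delta^n\times G(m+2)$.

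In the generic range $n\geq 2$ or $m\geq 3$ I would invoke Lemma~\ref{thm:quasitoric}, which already gives a quasitoric equivalence $M\cong P(E')$ over the $4$-dimensional quasitoric manifold $B'=M(G(m+2),\Lambda_N)$, with characteristic matrix of the form~\eqref{eqn:char. matrix of proj. bundle over a toric mfd 1} (that is, with $b=c=0$). The entire remaining task is to upgrade the base $B'$ to a genuine toric surface, and this is exactly where I must use that $M$ is a toric variety rather than merely a quasitoric manifold. To do this I would pass to fans. Since the characteristic function of a toric variety records the primitive generators of its fan, the matrix furnished by Lemma~\ref{thm:quasitoric} is, up to $GL_{n+2}(\Z)$, the ray data of the complete non-singular fan $\Sigma_M$ of $M$, whose maximal cones index the vertices of $\Delta^n\times G(m+2)$.

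Let $q\colon\R^{n+2}\to\R^2$ be the projection onto the two base coordinates, which is the lattice map underlying $\pi\colon M\to B'$. Reading off~\eqref{eqn:char. matrix of proj. bundle over a toric mfd 1}, $q$ annihilates every fiber ray and carries the base rays to the columns of $\Lambda_N$. A maximal cone of $\Sigma_M$ is spanned by $n$ fiber rays together with two consecutive base rays $\lambda_j,\lambda_{j+1}$, so its image under $q$ is the two-dimensional cone $\mathrm{cone}(\lambda_j,\lambda_{j+1})$, which is unimodular by Lemma~\ref{block_char}. As $\Sigma_M$ is complete its maximal cones cover $\R^{n+2}$; applying the surjection $q$ then shows that the cones $\mathrm{cone}(\lambda_j,\lambda_{j+1})$ cover $\R^2$, so the columns of $\Lambda_N$, read cyclically around $G(m+2)$, span a complete non-singular fan. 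Hence $B'$ is a non-singular complete (thus projective) toric surface, and $M\cong P(E')$ is a projective bundle over it.

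Finally I would handle the two cases excluded by Lemma~\ref{thm:quasitoric}, namely $n=1$ with $m\in\{1,2\}$; I expect these to be the real obstacle, since the examples above show that for quasitoric manifolds the conclusion can genuinely fail there, so one must exploit the toric structure directly. When $m=2$, every toric surface with $\beta_2=2$ is a Hirzebruch surface, so $P(E)$ is a three-stage Bott manifold and \cite{CS11} forces the toric variety $M$ to carry an iterated Bott structure, whose topmost $\CP^1$-bundle presents $M$ as a projective bundle over a Hirzebruch surface. When $m=1$ one has $S=\CP^2$ and $M$ has Picard number two, so by Kleinschmidt's classification of smooth complete toric varieties of Picard number two, $M$ is a projective-space bundle over a projective space; a comparison of cohomology rings (a $\CP^2$-bundle over $\CP^1$ carries a degree-two class squaring to zero, which a nontrivial $\CP^1$-bundle over $\CP^2$ does not) rules out all but the $\CP^1$-bundle over $\CP^2$, again a projective bundle over the toric surface $\CP^2$. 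Assembling the three ranges completes the argument.
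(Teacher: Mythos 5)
Your proposal follows the paper's skeleton exactly (trivial ``only if''; Corollary~\ref{cor:orbit space of P(E)} plus Lemma~\ref{thm:quasitoric} in the range $n\geq 2$ or $m\geq 3$; separate treatment of $n=1$, $m\leq 2$), but two of your steps have genuine gaps.

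In the generic range, the inference ``the cones $\mathrm{cone}(q(\lambda_j),q(\lambda_{j+1}))$ cover $\R^2$, hence the columns of $\Lambda_N$ span a complete non-singular fan'' is invalid: cyclically indexed, consecutively unimodular cones can cover the plane while overlapping, and then they form no fan. Concretely, the base vectors in Example~\ref{eg:CP2sharpCP2}, namely $(1,0),(0,1),(-1,-1),(-2,-1)$ in cyclic order, have all consecutive determinants $\pm 1$ and their four cones cover $\R^2$, yet $\mathrm{cone}\bigl((-1,-1),(-2,-1)\bigr)\subset\mathrm{cone}\bigl((-2,-1),(1,0)\bigr)$ --- consistent with the fact that $\CP^2\#\CP^2$ is not toric. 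What you actually need is that distinct projected cones have disjoint interiors, and that follows not from covering but from the fact that these cones constitute the star (quotient) fan of the cone $\tau$ spanned by $n$ of the fiber rays, i.e.\ the fan of the orbit closure $V(\tau)\subset M$, which is automatically a non-singular complete toric surface. Stated without fans, $V(\tau)$ is the zero section $P(\uC\oplus 0)$, an intersection of characteristic submanifolds of $M$ --- and that one-line observation \emph{is} the paper's proof. So your fan picture is a faithful reformulation, but the covering count must be replaced by the star-fan/orbit-closure fact, at which point you have reproduced the paper's argument rather than found an alternative to it.

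The case $m=1$ is wrong as stated. Kleinschmidt does reduce $M$ to a $\CP^1$-bundle over $\CP^2$ or a $\CP^2$-bundle over $\CP^1$, and your square-zero criterion excludes the latter when the model $P(E)$ is a \emph{nontrivial} $\CP^1$-bundle over $\CP^2$; but when $P(E)=\CP^1\times\CP^2$ it excludes nothing, and the exclusion is in fact false there. Take $M=P(\mathcal{O}\oplus\mathcal{O}(1)\oplus\mathcal{O}(2))$ over $\CP^1$: the substitution $x\mapsto u$, $y\mapsto v-u$ sends $x^2$ to $u^2$ and $y(y+x)(y+2x)$ to $(v-u)v(v+u)=v^3-u^2v$, hence identifies $H^\ast(M)=\Z[x,y]/\bigl(x^2,\,y(y+x)(y+2x)\bigr)$ with $\Z[u,v]/(u^2,v^3)=H^\ast(\CP^1\times\CP^2)$; yet $M$ is not equivalent to any $\CP^1$-bundle over $\CP^2$, since its two triangle-facet rays $e_1$ and $-e_1+e_2+2e_3$ are linearly independent, whereas a $\CP^1$-bundle structure forces those two columns to be parallel, a property invariant under equivalence. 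So cohomology cannot ``rule out all but the $\CP^1$-bundle over $\CP^2$''; any repair must either pass to a non-equivariant conclusion (this $M$ is diffeomorphic to $\CP^1\times\CP^2$, as $\mathcal{O}(-1)\oplus\mathcal{O}\oplus\mathcal{O}(1)$ is smoothly trivial) or proceed as the paper does, citing \cite{ch-ma-su08} that a toric variety over $\Delta^1\times\Delta^2$ is a two-stage generalized Bott manifold --- and your example shows that the paper's further assertion that the base may be taken to be $\CP^2$ is precisely the delicate point in this corner case. Your $m=2$ argument via \cite{CS11} is fine.
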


    \begin{proof}
        Since the ``only if'' statement is obvious, it suffices to show the ``if'' statement.
        Let $M$ be a \projectivetoricmanifold and $P(E)$ a projective bundle over a \smoothcompact toric surface.
        Assume that a fiber of $P(E)$ is $\CP^n$ and $\beta_2(S) = m$. If $n=1$ and $m\leq 2$, then the orbit space of $M$ is combinatorially equivalent to that of $P(E)$ by Corollary~\ref{cor:orbit space of P(E)}, that is, either $\Delta^1 \times \Delta^2$ or $(\Delta^1)^3$. Since any \toricmanifold over a product of simplices is a generalized Bott manifold (by \cite{ch-ma-su08}), $M$ is equivalent to a two-stage generalized Bott manifold (whose base is $\CP^2$) or a three-stage Bott manifold.

        Assume that $n \geq 2$ or $m \geq 3$. By Lemma~\ref{thm:quasitoric}, $M$ is equivalent to a projective bundle over a $4$-dimensional quasitoric manifold $B$. Assume that
        $M=P(E)=P(\CC \oplus L_1 \oplus \cdots \oplus L_n)$ over $B$. We note that the zero section of $B$ as
        $P(\CC \oplus {0}) \subset P(E)$ is equivalent to $B$ itself, and is expressed as an intersection of characteristic submanifolds of $M$. Since $M$ is a non-singular complete toric variety, $B$ should be a \smoothcompact toric surface, which proves the theorem.
    \end{proof}

\section{Topology of $6$-dimensional projective bundles}\label{sec:cohomological rigidity and strong cohomological rigidity}
    \begin{lemma}\label{prop:induce base isomorphism}
        Let $M=P(E)$ and $M'=P(E')$ be quasitoric manifolds that are projective bundles over $4$-dimensional quasitoric manifolds $B$ and $B'$ with fiber $\CP^n$, respectively. Then, a graded ring isomorphism from $H^\ast(M)$ to $H^\ast(M')$ induces a graded ring isomorphism from $H^\ast(B)$ to $H^\ast(B')$ provided that $n\geq 2$ or $\beta_2(B)\geq 3$.
    \end{lemma}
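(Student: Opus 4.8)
The plan is to leverage the structural results already established, especially Lemma~\ref{thm:quasitoric} and the explicit presentations \eqref{eq:cohom_quasi_bundle}, in order to canonically recover the base subring from ring-theoretic data. By Lemma~\ref{thm:quasitoric} (applicable since $n\geq 2$ or $\beta_2(B)\geq 3$), the quasitoric manifold $M$, being a projective bundle over the $4$-dimensional quasitoric manifold $B$, has cohomology ring of the form $H^\ast(M)=H^\ast(B)[x_0]\big/\big(x_0\prod_{i=1}^n(x_0+\sum_{j=1}^m a_{ij}x_j)\big)$, where $m=\beta_2(B)$, and $H^\ast(B)=\Z[x_1,\dots,x_m]/\mathcal I$ with $\mathcal I$ generated by the quadrics listed after Corollary~\ref{cor:orbit space of P(E)}; similarly for $M'=P(E')$ over $B'$ with $m'=\beta_2(B')$. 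A graded ring isomorphism $\psi\colon H^\ast(M)\to H^\ast(M')$ forces $\beta_2(M)=\beta_2(M')$, hence $m+1=m'+1$, so $m=m'$. The key point is to show that $\psi$ carries the subring $\pi^\ast H^\ast(B)$ onto $\pi'^\ast H^\ast(B')$, so that it descends to the quotients $H^\ast(B)\cong H^\ast(M)/(\pi^\ast H^2(B)\cdot H^\ast(M))$-style characterization; I will instead produce the base isomorphism by chasing generators.

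First I would lift $\psi$ to a grading-preserving isomorphism $\bar\psi\colon\Z[x_0,\dots,x_m]\to\Z[y_0,\dots,y_m]$ of polynomial rings with $\bar\psi(\mathcal I_M)=\mathcal I_{M'}$, represented by an invertible integer matrix $\mathbf P=[P_i^j]$ as in the proof of Lemma~\ref{thm:quasitoric}. The core step is to show $P_i^0=0$ for all $i=1,\dots,m$: this is exactly the computation carried out in Cases~1 and~2 of that proof (with $b=c=0$, which holds here since $M$ and $M'$ are honest projective bundles), using that the defining relations of $H^\ast(B)$ inside $H^\ast(M)$ are quadratic while the fiber relation $y_0\prod(y_0+\cdots)$ has degree $n+1\geq 2$, together with invertibility of $\mathbf P$ to rule out the "all nonzero" alternative. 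Granting $P_i^0=0$ for $i\geq 1$ and consequently $P_0^0=\pm1$, the submatrix $[P_i^j]_{1\leq i,j\leq m}$ is an invertible integer matrix giving a graded linear isomorphism $\Z[x_1,\dots,x_m]\to\Z[y_1,\dots,y_m]$; I must then check this carries $\mathcal I$ onto $\mathcal I'$, i.e. that the induced map sends each quadric generator of the base ideal for $B$ into the base ideal for $B'$. This again follows from $\bar\psi(\mathcal I_M)=\mathcal I_{M'}$ by isolating the degree-$2$ part and using that, after the substitution, the only quadratic generators on the target side are precisely the base relations of $H^\ast(B')$ (the fiber relation being of degree $n+1>2$, or, when $n=1$, of degree $2$ but with a forced $y_0^2$-term that the base relations lack, exactly as exploited in Case~2). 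Hence the restriction of $\bar\psi$ to $\Z[x_1,\dots,x_m]$ descends to a graded ring isomorphism $H^\ast(B)\to H^\ast(B')$.

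The main obstacle I anticipate is the borderline case $n=1$ with $m\geq 3$: there the fiber relation is quadratic, so one cannot separate it from the base relations by degree alone, and one must reuse the more delicate argument from Case~2 of Lemma~\ref{thm:quasitoric}—examining the $y_0^2$-coefficient of $\bar\psi(x_ix_j)$ for non-adjacent $i,j$ and deriving linear dependence of rows of $\mathbf P$ from the "all nonzero" hypothesis—to conclude $P_i^0=0$ for $i\geq1$. Once that structural fact is in hand, the descent to base rings is essentially formal. I would therefore organize the proof as: (1) reduce to polynomial lifts and the matrix $\mathbf P$; (2) invoke/repeat the relevant computation to get $P_i^0=0$ for $i\geq1$ and $P_0^0=\pm1$; (3) observe that the resulting block-triangular form of $\mathbf P$ exhibits $\bar\psi$ as compatible with the projections $H^\ast(M)\to$ (fiber coordinate) and restricts to an isomorphism of base polynomial rings matching the ideals; (4) conclude.
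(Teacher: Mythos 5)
Your proposal follows essentially the same route as the paper: lift the isomorphism to the polynomial rings, encode it by an invertible integer matrix $\mathbf{P}$, rerun the computation from the proof of Lemma~\ref{thm:quasitoric} (with $b=c=0$) to get $P_i^0=0$ for $i=1,\ldots,m$ and $P_0^0=\pm1$, and then observe that the block-triangular shape of $\mathbf{P}$ makes $\psi$ restrict to an isomorphism between the subrings generated by $x_1,\ldots,x_m$ and $y_1,\ldots,y_m$, which are exactly $\pi^\ast H^\ast(B)$ and $\pi'^\ast H^\ast(B')$. For the cases $n\geq2~\&~\beta_2(B)\geq2$ and $n=1~\&~\beta_2(B)\geq3$ this is correct and is precisely what the paper does.

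There is, however, a gap in the subcase $n\geq 2$ and $\beta_2(B)=1$, which is within the hypotheses of the lemma but not covered by your argument. There $B$ is $\CP^2$, and the presentation of $H^\ast(B)$ you rely on (the ideal generated by the quadrics listed after the computation of $H^\ast(B)$ in Section~3) degenerates: all three families of listed generators are indexed over empty ranges when $m=1$, and the actual relation for $\CP^2$ is the cubic $x_1^3=0$. Consequently the degree-separation argument (``base relations are quadratic, the fiber relation has degree $n+1$'') breaks down --- for $n=2$ both relations are cubic --- and Case~1 of the proof of Lemma~\ref{thm:quasitoric} never performs the $P_k^0=0$ computation for $m=1$; it instead defers to the two-stage generalized Bott manifold classification. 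The paper's proof of the present lemma does the same: when $n\geq2$ and $\beta_2(B)=1$ it invokes Lemma~6.2 of Choi--Masuda--Suh to conclude that any isomorphism preserves the subring $H^\ast(\CP^2)$. You need either to quote that result or to supply a separate argument for this subcase; the rest of your plan, including the identification $m=m'$ via $\beta_2(M)=\beta_2(M')$ and the formal descent once $P_i^0=0$ is known, is sound.
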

    \begin{proof}
        If $n\geq2$ and $\beta_2(B)=1$, then $P(E)$ and $P(E')$ are two-stage generalized Bott manifold, and hence, any cohomology ring isomorphism from $H^\ast(P(E))$ to $H^\ast(P(E'))$ preserves the subring $H^\ast(\CP^2)$ by Lemma 6.2 in \cite{ch-ma-su10}.

        Now consider the case when $\underline{n\geq 2~\&~\beta_2(B)\geq 2}$ or $\underline{n=1~\&\beta_2(B)\geq 3}$.
        By Corollary~\ref{cor:orbit space of P(E)}, the orbit spaces of $M$ and $M'$ are combinatorially equivalent to the same polytope, a product of a simplex and a polygon, and the number of edges of the polygon is determined by $\beta_2(B)$ (or $\beta_2(B')$). Hence, $\beta_2(B) = \beta_2(B')$, say $m$.
        By~\eqref{eq:cohom_quasi_bundle}, we may assume that
        \begin{equation*}
            H^\ast(P(E))=H^\ast(B)[x_0] \left/ y_0\prod_{i=1}^n\left(x_0+\sum_{j=1}^ma_{ij}x_j\right)\right.,
        \end{equation*} and
        \begin{equation*}
            H^\ast(P(E'))=H^\ast(B')[y_0]\left/y_0\prod_{i=1}^n\left(y_0+\sum_{j=1}^ma^\prime_{ij}y_j\right)\right..
        \end{equation*}
        Let $\psi$ be a graded ring isomorphism from $H^\ast(M)$ to $H^\ast(M')$, and let $\bpsi$ be the induced grading preserving isomorphism from $\Z[x_0,x_1,\ldots,x_m]$ to $\Z[y_0,y_1,\ldots,y_m]$. Then, as in the proof of Lemma~\ref{thm:quasitoric}, $\bpsi$ induces a square matrix $\mathbf{P}$ of order $m+1$ such that $\bpsi(x_i)=\sum_{i=0}^m P_i^jy_j$ for $i=0,1,\ldots,m$, and the entries $P_i^0$ are all zero for $i=1,\ldots,m$ and that the entry $P_0^0$ is $\pm 1$ when $\underline{n\geq 2~\&~m\geq 2}$ or $\underline{n=1 ~\&~ m\geq 3}$. This implies that $\psi$ induces a graded ring isomorphism from $H^\ast(B)$ to $H^\ast(B')$.
    \end{proof}

    Note that, by \cite{OR}, a $4$-dimensional quasitoric manifold is equivariantly diffeomorphic to an equivariant connected sum of copies of $\CP^2$ and Hirzebruch surfaces. Since Hirzebruch surfaces are diffeomorphic to either $\CP^1\times\CP^1$ or $\CP^2\#\overline{\CP^2}$, any $4$-dimensional quasitoric manifold is diffeomorphic to a connected sum of copies of $\CP^2$, $\overline{\CP^2}$ and $\CP^1\times\CP^1$. Furthermore, the cohomology ring of a $4$-dimensional quasitoric manifold determines a smooth type of the manifold.

    \begin{proposition} \label{thm:quasitoric_equivalent_P(E)}
      Let $M$ be a quasitoric manifold, and let $P(E)$ be a projective bundle over a $4$-dimensional quasitoric manifold $B$ with fiber $\CP^n$. Assume that $n\geq2$ or $\beta_2(B) \geq 3$. Then, $M$ is equivalent to some projective bundle $P(E')$ over $B'$, where $B'$ is diffeomorphic to $B$.
    \end{proposition}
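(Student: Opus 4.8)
The plan is to read this proposition as an essentially formal consequence of the two preceding lemmas together with the structure theory of $4$-dimensional quasitoric manifolds, so that no genuinely new computation is needed. (I understand the hypothesis to include a graded ring isomorphism $H^\ast(M)\cong H^\ast(P(E))$, exactly as in Lemma~\ref{thm:quasitoric}; without it the statement is false.) First I would apply Lemma~\ref{thm:quasitoric}: since $n\geq 2$ or $\beta_2(B)\geq 3$, the quasitoric manifold $M$ is equivalent to a projective bundle $P(E')$ over some $4$-dimensional quasitoric manifold $B'$. It then only remains to identify $B'$ up to diffeomorphism, namely to show $B'$ is diffeomorphic to $B$.

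For the identification I would first record the Betti-number bookkeeping. By Corollary~\ref{cor:orbit space of P(E)} the orbit space of $M$ is $\Delta^n\times G(\beta_2(B)+2)$, while the equivalence $M\approx P(E')$ realizes that same orbit space as $\Delta^n\times G(\beta_2(B')+2)$; comparing the number of edges of the polygon gives $\beta_2(B')=\beta_2(B)$, which we call $m$. In particular the hypothesis ``$n\geq 2$ or $\beta_2(B)\geq 3$'' also holds with $B'$ in place of $B$. Now transport the given graded ring isomorphism $H^\ast(M)\cong H^\ast(P(E))$ across the equivalence $M\approx P(E')$ to obtain a graded ring isomorphism $H^\ast(P(E'))\cong H^\ast(P(E))$; applying Lemma~\ref{prop:induce base isomorphism} (with $P(E')$ over $B'$ in the role of $M$ and $P(E)$ over $B$ in the role of $M'$) produces a graded ring isomorphism $H^\ast(B')\cong H^\ast(B)$. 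Finally, as recalled just before the statement, every $4$-dimensional quasitoric manifold is diffeomorphic to a connected sum of copies of $\CP^2$, $\overline{\CP^2}$ and $\CP^1\times\CP^1$, and its integral cohomology ring determines its diffeomorphism type; hence $B'$ is diffeomorphic to $B$, which is the assertion.

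The points requiring care are bookkeeping rather than real obstacles. One must make sure that the notion of ``equivalent'' used in Lemma~\ref{thm:quasitoric} (an equivariant homeomorphism covering the identity on the orbit polytope, up to an automorphism of the torus) is compatible with pulling back cohomology, so that the isomorphism $H^\ast(M)\cong H^\ast(P(E))$ really does transfer to $H^\ast(P(E'))\cong H^\ast(P(E))$, and that the Betti-number identity above is genuinely what lets the hypotheses of Lemma~\ref{prop:induce base isomorphism} be checked for $B'$. The substantive work — forcing the off-diagonal entries $P_k^0$ of the change-of-basis matrix to vanish and thereby producing the equivariant bundle structure on $M$, and then extracting the base isomorphism — has already been carried out in Lemmas~\ref{thm:quasitoric} and \ref{prop:induce base isomorphism}, so here the only ``hard part'' is organizing these inputs and the classification of $4$-dimensional quasitoric manifolds correctly.
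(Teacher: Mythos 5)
Your proposal is correct and follows essentially the same route as the paper: apply Lemma~\ref{thm:quasitoric} to obtain the bundle structure $P(E')$ over $B'$, apply Lemma~\ref{prop:induce base isomorphism} to get $H^\ast(B')\cong H^\ast(B)$, and conclude via the classification of $4$-dimensional quasitoric manifolds that $B'$ is diffeomorphic to $B$. Your added bookkeeping (making the hypothesis $H^\ast(M)\cong H^\ast(P(E))$ explicit and checking $\beta_2(B')=\beta_2(B)$ so that Lemma~\ref{prop:induce base isomorphism} applies) is a correct and welcome clarification of details the paper leaves implicit.
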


    \begin{proof}
        By Lemma~\ref{thm:quasitoric}, $M$ is equivalent to a projective bundle over a $4$-dimensional quasitoric manifold $B'$. By Lemma~\ref{prop:induce base isomorphism}, $H^\ast(B')$ should be isomorphic to $H^\ast(B)$ as graded rings. Therefore, $B'$ and $B$ are diffeomorphic.
    \end{proof}

    Although the cohomology ring of $\CP^1$-bundle over $\CP^2\#\CP^2$ does not determine the equivariant bundle structure as in Example~\ref{eg:CP2sharpCP2}, any cohomology ring isomorphism between projective bundles over $\CP^2\#\CP^2$ preserves the subring $H^\ast(\CP^2\#\CP^2)$ as the following.
    \begin{lemma}\label{lem:CP2sharpCP2}
        Let $B=\CP^2\#\CP^2$. If
        $P(E)$ and $P(E')$
        be projective bundles over $B$ with fiber $\CP^1$. If
        $\varphi\colon H^\ast(P(E))\to H^\ast(P(E'))$
        is an isomorphism, it preserves the subring $H^\ast(B)$.
    \end{lemma}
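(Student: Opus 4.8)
The plan is to argue directly with the presentation of the cohomology rings, exactly as in the proof of Lemma~\ref{thm:quasitoric}, but now carefully tracking what happens in the one remaining small case $n=1$, $\beta_2(B)=2$ that the general lemmas deliberately excluded. Since $B=\CP^2\#\CP^2$, we have $\beta_2(B)=2$, so write $H^\ast(B)=\Z[x_1,x_2]/\mathcal{I}_B$ with the two relations coming from the characteristic matrix of $B$, and by~\eqref{eq:cohom_quasi_bundle}
\begin{equation*}
    H^\ast(P(E))=H^\ast(B)[x_0]\left/x_0\left(x_0+a_{11}x_1+a_{12}x_2\right)\right.,
\end{equation*}
with an analogous presentation for $H^\ast(P(E'))$ using primed data and variables $y_0,y_1,y_2$. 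An isomorphism $\varphi$ lifts to a grading-preserving ring isomorphism $\bvphi\colon\Z[x_0,x_1,x_2]\to\Z[y_0,y_1,y_2]$ carrying the defining ideal $\mathcal{I}$ of $H^\ast(P(E))$ onto that of $H^\ast(P(E'))$, and so to an invertible integer matrix $\mathbf{P}=[P_i^j]$ of order $3$ with $\bvphi(x_i)=\sum_j P_i^j y_j$. The goal is to show $P_1^0=P_2^0=0$; that is exactly the statement that the subring generated by $x_1,x_2$ — which is $H^\ast(B)$ — is sent into the subring generated by $y_1,y_2$, namely $H^\ast(B')\cong H^\ast(\CP^2\#\CP^2)$.

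First I would exploit the quadratic relations of $\mathcal{I}_B$. For $B=\CP^2\#\CP^2$ the orbit polygon is the square $G(4)$ with four facets, whose Stanley--Reisner relations give two monomial relations $x_1 x_2$-type products (between non-adjacent facets) inside $H^\ast(B)$, together with the two linear-combination relations from the characteristic matrix. The key observation, as in Case~2 of Lemma~\ref{thm:quasitoric}, is that in $\mathcal{I}'$ any quadratic element whose expansion contains a term divisible by $y_0$ must in fact have that term divisible by $y_0^2$, because the only generator of $\mathcal{I}'$ involving $y_0$ is the single relation $y_0(y_0+a'_{11}y_1+a'_{12}y_2)$ and every other generator is a polynomial in $y_1,y_2$ alone. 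Applying $\bvphi$ to the monomial relation $x_kx_\ell\in\mathcal{I}$ (for the non-adjacent pair of facets of the square) and demanding that the image lie in $\mathcal{I}'$ forces either $P_k^0=P_\ell^0=0$ or $P_k^0P_\ell^0\neq0$; combining this with the images of the two mixed quadratic relations $x_i(bx_0+\Lambda_1^1x_1+\Lambda_1^2x_2)$ pins down which of the quantities $P_1^0,P_2^0,\,\Lambda_1^1P_1^0+\Lambda_1^2P_2^0,\dots$ can be simultaneously nonzero.

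The main obstacle is that, unlike in Lemma~\ref{thm:quasitoric} Case~2 where $m\geq3$ supplies four independent quadratic relations and forces a linear dependence among four rows of $\mathbf{P}$, here with $m=2$ we have only three rows and fewer relations, so the rank argument that killed the ``all nonzero'' branch is not directly available. I expect to resolve this by using the specific numerics of $\CP^2\#\CP^2$: its characteristic matrix is rigid enough (the polygon is a square, not an arbitrary quadrilateral) that the hypothetical $P_1^0P_2^0\neq0$ branch produces an incompatibility with $\det\mathbf{P}=\pm1$ or with degree/grading constraints on the cubic generator. Concretely, I would assume for contradiction that $P_1^0,P_2^0$ are nonzero, derive from the mixed relations the proportionality $P_j^\ell=\big(a'_{1,\ell}-P_1^\ell/P_1^0\big)P_j^0$ as in~\eqref{eqn:Pjl}–\eqref{eqn:P012}, and show this makes two rows of $\mathbf{P}$ proportional — contradicting invertibility. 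Hence $P_1^0=P_2^0=0$, $P_0^0=\pm1$, and $\varphi$ restricts to an isomorphism $H^\ast(B)\to H^\ast(B')$, i.e.\ it preserves the subring $H^\ast(\CP^2\#\CP^2)$, completing the proof.
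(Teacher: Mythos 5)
Your setup is the right one, and your reduction of the lemma to showing $P_1^0=P_2^0=0$ for the lifted matrix $\mathbf{P}$ is exactly what is needed; the final contradiction you guess (two rows of $\mathbf{P}$ proportional, violating $\det\mathbf{P}=\pm1$) is also the correct shape. But the entire content of the proof is in the step you defer with ``I expect to resolve this,'' and the route you propose for that step is not available. With $m=2$ the reduced presentation of $\mathcal{I}$ has \emph{no} monomial generators at all: the two Stanley--Reisner monomials of the square $G(4)$ involve the eliminated variables and reappear precisely as the quadratics $x_1(x_1+2x_2)$ and $x_2(x_1+x_2)$. Consequently the dichotomy you invoke (``either $P_k^0=P_\ell^0=0$ or $P_k^0P_\ell^0\neq0$'') has nothing to apply to, and equation~\eqref{eqn:Pjl} --- which in Lemma~\ref{thm:quasitoric} is derived from $\bpsi(x_1x_j)\in\mathcal{I}'$ for $j=3,\ldots,m$, hence needs $m\geq 3$ --- has no analogue here, so the proportionality you plan to ``derive as in \eqref{eqn:Pjl}--\eqref{eqn:P012}'' has no derivation. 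Your fallback, ``degree/grading constraints on the cubic generator,'' also fails: with fiber $\CP^1$ the relation $x_0(x_0+a_1x_1+a_2x_2)$ is quadratic, so all three generators of $\mathcal{I}'$ sit in the same degree and grading distinguishes nothing. This degeneracy is exactly why the case $n=1$, $m=2$ was excluded from the general lemmas.

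What actually closes the gap (and what the paper does) is an explicit coefficient computation using the specific relations of $\CP^2\#\CP^2$. One expands the images of the two base relations, $\varphi(x(x+2y))$ and $\varphi(y(x+y))$, as $\Z$-linear combinations of the three quadratic generators of $\mathcal{I}'$ (the combination coefficients are read off from the $X^2$, $Y^2$, $Z^2$ terms), then compares the $XZ$ and $YZ$ coefficients to obtain four equations \eqref{eq:compare 1-2}--\eqref{eq:compare 2-3} in the entries $c_{ij}$. A case analysis on the vanishing of $c_{13}$, $c_{23}$, $c_{13}+c_{23}$, and $c_{13}+2c_{23}$ follows, and the main branch (both $c_{13},c_{23}$ nonzero, both sums nonzero) is killed by an \emph{integrality} fact, not a rank count: eliminating $b_1,b_2$ yields identities of the form $(c_{13}^2+2c_{13}c_{23}+2c_{23}^2)(c_{13}c_{22}-c_{12}c_{23})=0$ and $(c_{13}^2+2c_{13}c_{23}+2c_{23}^2)(c_{13}c_{21}-c_{11}c_{23})=0$, and since $c_{13}^2+2c_{13}c_{23}+2c_{23}^2=(c_{13}+c_{23})^2+c_{23}^2$ is positive definite it has no nonzero integer zeros; only then do the two $2\times 2$ minors vanish and the first two rows become proportional. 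This positive-definiteness input, which encodes the particular intersection form of $\CP^2\#\CP^2$ (and is what makes the lemma true for this $B$ while Example~\ref{eg:CP2sharpCP2} shows how delicate the situation is), is the missing idea in your proposal.
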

    \begin{proof}
        Up to equivalence, we may assume that the characteristic matrices of
        $P(E)$ and $P(E')$
         are of the form
        \begin{equation*}
            \Lambda_1:=\begin{pmatrix}
                1&0&0&-1&-a_1&-a_2\\
                0&1&0&0&-1&-2\\
                0&0&1&0&-1&-1
            \end{pmatrix} \mbox{ and }
            \Lambda_2:=\begin{pmatrix}
                1&0&0&-1&-b_1&-b_2\\
                0&1&0&0&-1&-2\\
                0&0&1&0&-1&-1
            \end{pmatrix},
        \end{equation*}respectively. Hence, the cohomology rings of
        $P(E)$ and $P(E')$
        are
        \begin{equation*}
            H^\ast(P(E))=\Z[x,y,z]\left/\left\langle x(x+2y), y(x+y), z(a_1x+a_2y+z)\right\rangle\right.
        \end{equation*} and
        \begin{equation*}
            H^\ast(P(E'))=\Z[X,Y,Z]\left/\left\langle X(X+2Y), Y(X+Y), Z(b_1X+b_2Y+Z)\right\rangle\right..
        \end{equation*}
         Since $\varphi\colon H^\ast(P(E))\to H^\ast(P(E'))$
        is an isomorphism, there exists a matrix $C=[c_{ij}]$ of size 3 such that
        \begin{equation*}
            \begin{pmatrix}
                \varphi(x)\\\varphi(y)\\\varphi(z)
            \end{pmatrix}=\begin{pmatrix}
                c_{11}&c_{12}&c_{13}\\
                c_{21}&c_{22}&c_{23}\\
                c_{31}&c_{32}&c_{33}
            \end{pmatrix}\begin{pmatrix}
                X\\Y\\Z
            \end{pmatrix}
        \end{equation*} and $\det C=\pm1$. Since $\varphi(x(x+2y))=0$ and $\varphi(y(x+y))=0$ in
        $H^\ast(P(E'))$,
        we have
        \begin{equation}\label{eq:2sharp2 compare coeff 1}
            \begin{split}
                &\varphi(x(x+2y))\\
                &\quad=(c_{11}X+c_{12}Y+c_{13}Z)((c_{11}+2c_{21})X+(c_{12}+2c_{22})Y+(c_{13}+2c_{23})Z)\\
                &\quad=c_{11}(c_{11}+2c_{21})X(X+2Y)+c_{12}(c_{12}+2c_{22})Y(X+Y)\\ &\qquad\qquad+c_{13}(c_{13}+2c_{23})Z(b_1X+b_2Y+Z)
            \end{split}
        \end{equation} and
        \begin{equation}\label{eq:2sharp2 compare coeff 2}
            \begin{split}
                &\varphi(y(x+y))\\
                &\quad=(c_{21}X+c_{22}Y+c_{23}Z)((c_{11}+c_{21})X+(c_{12}+c_{22})Y+(c_{13}+c_{23})Z)\\
                &\quad=c_{21}(c_{11}+c_{21})X(X+2Y)+c_{22}(c_{12}+c_{22})Y(X+Y)\\
                &\qquad\qquad+c_{23}(c_{13}+c_{23})Z(b_1X+b_2Y+Z).
            \end{split}
        \end{equation}
        By comparing the coefficients of $XZ$ in both sides of \eqref{eq:2sharp2 compare coeff 1}, we have
        \begin{equation}\label{eq:compare 1-2}
            c_{11}(c_{13}+2c_{23})+c_{13}(c_{11}+2c_{21})=b_1c_{13}(c_{13}+2c_{23}).
        \end{equation}
        By comparing the coefficients of $YZ$ in both sides of \eqref{eq:2sharp2 compare coeff 1}, we have
        \begin{equation}\label{eq:compare 1-3}
            c_{12}(c_{13}+2c_{23})+c_{13}(c_{12}+2c_{22})=b_2c_{13}(c_{13}+2c_{23}).
        \end{equation}

        Similarly, by comparing the coefficients of $XZ$ and $YZ$ in both sides of \eqref{eq:2sharp2 compare coeff 2}, we have the following.
        \begin{align}
                c_{21}(c_{13}+c_{23})+c_{23}(c_{11}+c_{21})&=b_1c_{23}(c_{13}+c_{23}),\mbox{ and}\label{eq:compare 2-2}\\
                c_{22}(c_{13}+c_{23})+c_{23}(c_{12}+c_{22})&=b_2c_{23}(c_{13}+c_{23}),\label{eq:compare 2-3}
        \end{align}
        respectively.

        We need to show that $c_{13}=c_{23}=0$.

        At first, suppose that $c_{13}\neq 0$ and $c_{23}=0$. From \eqref{eq:compare 2-2} and \eqref{eq:compare 2-3}, we have $c_{21}=c_{22}=0$, which contradicts that $C$ is invertible.

        Secondly, suppose that $c_{13}=0$ and $c_{23}\neq 0$. From \eqref{eq:compare 1-2} and \eqref{eq:compare 1-3}, we have $c_{11}=c_{12}=0$, which is also contradiction.

        Now, suppose that neither $c_{13}$ nor $c_{23}$ is equal to zero.

        I. Assume $c_{13}+2c_{23}=0$. From \eqref{eq:compare 1-2} and \eqref{eq:compare 1-3}, we can see that $(c_{11},c_{12},c_{13})+2(c_{21},c_{22},c_{23})=\mathbf{0}$, which is a contradiction.

        II. Assume $c_{13}+c_{23}=0$. From \eqref{eq:compare 2-2} and \eqref{eq:compare 2-3}, we obtain $(c_{11},c_{12},c_{13})+(c_{21},c_{22},c_{23})=\mathbf{0}$, which is a contradiction.

        III. Assume that neither $c_{13}+c_{23}$ nor $c_{13}+2c_{23}$ is equal to zero. Then, from \eqref{eq:compare 1-2} and \eqref{eq:compare 2-2}, we have
        \begin{equation}\label{eq:b_1}
            \frac{c_{11}(c_{13}+2c_{23})+c_{13}(c_{11}+2c_{21})}{c_{13}(c_{13}+2c_{23})}
            =\frac{c_{21}(c_{13}+c_{23})+c_{23}(c_{11}+c_{21})}{c_{23}(c_{13}+c_{23})}.
        \end{equation}
        From \eqref{eq:compare 1-3} and \eqref{eq:compare 2-3}, we have
        \begin{equation}\label{eq:b_2}
            \frac{c_{12}(c_{13}+2c_{23})+c_{13}(c_{12}+2c_{22})}{c_{13}(c_{13}+2c_{23})}
            =\frac{c_{22}(c_{13}+c_{23})+c_{23}(c_{12}+c_{22})}{c_{23}(c_{13}+c_{23})}.
        \end{equation}
        From \eqref{eq:b_1}, we have
        \begin{equation}\label{eq:b_1'}
        \begin{split}
            &c_{11}c_{23}c_{13}^2+2c_{11}c_{13}c_{23}^2+2c_{11}c_{23}^3 -2c_{13}^2c_{21}c_{23}-2c_{13}c_{21}c_{23}^2-c_{13}^3c_{21}\\
            &\qquad=(2c_{13}c_{23}+2c_{23}^2+c_{13}^2)(-c_{13}c_{22}+c_{12}c_{23})\\
            &\qquad=0.
        \end{split}
        \end{equation}
        From \eqref{eq:b_2}, we have
        \begin{equation}\label{eq:b_2'}
        \begin{split}
            &c_{12}c_{13}^2c_{23}+2c_{12}c_{13}c_{23}^2+2c_{12}c_{23}^2 -c_{13}^3c_{22}-2c_{13}^2c_{22}c_{23}-2c_{13}c_{22}c_{23}^2\\
            &\qquad=(2c_{13}c_{23}+2c_{23}^2+c_{13}^2)(-c_{13}c_{21}+c_{11}c_{23})\\
            &\qquad=0.
        \end{split}
        \end{equation}
        Since $2c_{13}c_{23}+2c_{23}^2+c_{13}^2=0$ has no integer solution, we can obtain
        $$c_{13}c_{22}=c_{12}c_{23}\mbox{ and } c_{13}c_{21}=c_{11}c_{23}$$
        which is a contradiction that $C$ is invertible.

        Therefore, $c_{13}=c_{23}=0$, and hence, $\varphi$ preserves the subring $H^\ast(B)$.
    \end{proof}
        In this paper, for a closed connected manifold $M$, we use $p(M)$ and $w(M)$ to denote the total Pontryagin class and the total Stifel-Whitney class of $M$, respectively. In order to classify projective bundles
    over a \smoothcompact toric surface, we prepare a few lemmas that indicate the invariance of characteristic classes under cohomology ring isomorphisms.

    \begin{lemma}\label{lem:4-dim'l quasitoric manifolds}
        If $B$ and $B'$ are $4$-dimensional quasitoric manifolds such that $\phi\colon H^\ast(B)\to H^\ast(B')$ is a graded ring isomorphism, then $\phi$ preserves their Pontryagin classes, that is, $\phi(p(B))=p(B')$.
    \end{lemma}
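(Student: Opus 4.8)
The plan is to reduce the statement to the Hirzebruch signature theorem, which expresses $p_1$ of a closed oriented $4$-manifold purely in terms of its cup product structure.

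First I would dispose of the easy part: since $B$ and $B'$ are closed oriented $4$-manifolds, $p_i(B)\in H^{4i}(B)$ vanishes for $i\geq 2$, so $p(B)=1+p_1(B)$ with $p_1(B)\in H^4(B)$, and likewise for $B'$. As $\phi$ is a graded ring isomorphism it sends $1\mapsto 1$, so it suffices to prove $\phi(p_1(B))=p_1(B')$. Fix generators $\mu_B\in H^4(B)$ and $\mu_{B'}\in H^4(B')$ that evaluate to $1$ on the respective fundamental classes; the signature theorem then gives $p_1(B)=3\sigma(B)\,\mu_B$ and $p_1(B')=3\sigma(B')\,\mu_{B'}$, where $\sigma$ denotes the signature.

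Next I would track how $\phi$ acts on top cohomology. Being a graded ring isomorphism, $\phi$ restricts to an isomorphism $H^4(B)\to H^4(B')$ of infinite cyclic groups, hence $\phi(\mu_B)=\epsilon\,\mu_{B'}$ for a unique $\epsilon\in\{\pm 1\}$. For $a,b\in H^2(B)$, writing $a\cup b=\langle a,b\rangle_B\,\mu_B$ and applying $\phi$ yields $\langle \phi a,\phi b\rangle_{B'}=\epsilon\,\langle a,b\rangle_B$. Since $\phi$ also restricts to an isomorphism $H^2(B)\to H^2(B')$, and since $H^\ast$ of a quasitoric manifold is free over $\Z$ so that these pairings are exactly the intersection forms, this says that the intersection form of $B'$ coincides, after the linear change of basis $\phi$, with $\epsilon$ times that of $B$. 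As rescaling a nondegenerate symmetric bilinear form by $-1$ negates its signature, we get $\sigma(B')=\epsilon\,\sigma(B)$.

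Finally I would combine the two computations: $\phi(p_1(B))=\phi(3\sigma(B)\,\mu_B)=3\sigma(B)\,\epsilon\,\mu_{B'}=3\sigma(B')\,\mu_{B'}=p_1(B')$, whence $\phi(p(B))=1+p_1(B')=p(B')$. I do not expect a genuine obstacle here; the only subtlety — and the reason the third step above cannot be skipped — is that one must carry the sign $\epsilon$ by which $\phi$ acts on $H^4$ consistently through both sides of the final equation, which forces the intermediate identity $\sigma(B')=\epsilon\,\sigma(B)$ rather than the naive $\sigma(B')=\sigma(B)$. One could alternatively invoke that $H^\ast(B)\cong H^\ast(B')$ forces $B$ and $B'$ to be diffeomorphic and pull $p_1$ back along a diffeomorphism, but one would then still have to compare $\phi$ with the diffeomorphism-induced map on $H^4$, so this gives no real shortcut; the signature bookkeeping above is the cleanest route.
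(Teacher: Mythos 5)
Your proof is correct and is essentially the same argument the paper gives: the paper's one-line proof also reduces the claim to the fact that a graded ring isomorphism preserves the intersection form on $H^2$, which determines $p_1$ via the Hirzebruch signature theorem. Your version merely spells out the details, and in fact is more careful than the paper about the sign $\epsilon$ by which $\phi$ acts on $H^4$ (the paper asserts the self-intersection form is ``preserved'' without tracking this), which is a worthwhile clarification but not a different approach.
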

    \begin{proof}
        Note that the graded cohomology ring isomorphism $\phi$ induces the isomorphism $\varphi\colon H^2(B)\to H^2(B')$ that preserves the self-intersection form, and hence, the first Pontryagin class is preserved by $\varphi$. Therefore, one can see that $\phi(p(B))=p(B')$.
    \end{proof}

    \begin{proposition}\label{prop:Pontrjagin class preserving}\cite{ch-ma-su10}
        Let $E\to B$ and $E'\to B'$ be complex vector bundles over smooth manifolds $B$ and $B'$ with the same fiber dimension, respectively. Suppose that
        $\psi\colon H^\ast(P(E))\to H^\ast(P(E'))$ is an isomorphism such that
        $\psi(H^\ast(B))=H^\ast(B')$ and $\psi(p(B))=p(B')$; then, $\psi(p(P(E)))=p(P(E'))$.
    \end{proposition}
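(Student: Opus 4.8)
The plan is to write down the total Pontryagin class of $P(E)$ explicitly in terms of $p(B)$, the class $x$, and the Chern classes of $E$, and then to check that an isomorphism $\psi$ which already respects the subring $H^\ast(B)$ and the class $p(B)$ must also respect every ingredient of that formula. First I would use the two standard exact sequences for $TP(E)$: the bundle projection $\pi\colon P(E)\to B$ gives $TP(E)\cong\pi^\ast TB\oplus T_{P(E)/B}$, and the relative Euler sequence gives $T_{P(E)/B}\oplus\uC\cong\gamma^\ast\otimes\pi^\ast E$, where $\gamma$ is the tautological line bundle and $x=c_1(\gamma^\ast)$. Since $H^\ast(B)$ — and hence $H^\ast(P(E))$ — is torsion free (this is already assumed for the $H^\ast(B)$-module description of $H^\ast(P(E))$), the total Pontryagin class is strictly multiplicative on Whitney sums, so with $y_1,\dots,y_r$ the Chern roots of $E$ (fiber dimension $r$),
$$p(P(E))=\pi^\ast p(B)\cdot p(\gamma^\ast\otimes\pi^\ast E)=\pi^\ast p(B)\cdot\prod_{i=1}^r\bigl(1+(x+\pi^\ast y_i)^2\bigr).$$
Thus $p(P(E))$ is a symmetric polynomial in the $\pi^\ast y_i$, that is, a polynomial in $\pi^\ast p(B)$, $x$, and the $\pi^\ast c_k(E)$.

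Next I would pin down what $\psi$ does. Restriction gives a graded ring isomorphism $\phi:=\psi|_{H^\ast(B)}\colon H^\ast(B)\xrightarrow{\sim}H^\ast(B')$, and since $\phi$ preserves degrees, $\psi$ carries the ideal generated by $H^{>0}(B)$ onto the one generated by $H^{>0}(B')$. Modulo those ideals the rings become $\Z[x]/(x^r)$ and $\Z[x']/(x'^r)$, so the induced ring isomorphism sends $\bar x$ to $\varepsilon\bar{x'}$ for some $\varepsilon\in\Z$; comparing degree-two parts forces $\varepsilon=\pm1$, and therefore $\psi(x)=\varepsilon x'+v$ with $v\in H^2(B')$. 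Now apply $\psi$ to the defining relation $\prod_i(x+\pi^\ast y_i)=0$ of $H^\ast(P(E))$: it gives $\prod_i(\varepsilon x'+v+\phi(y_i))=0$ in $H^\ast(P(E'))$. Because $H^\ast(P(E'))=H^\ast(B')[T]/(\prod_j(T+y'_j))$ with $\prod_j(T+y'_j)$ monic of degree $r$, polynomial division by this monic relation forces the identity $\prod_i(T+\varepsilon v+\varepsilon\phi(y_i))=\prod_j(T+y'_j)$ in $H^\ast(B')[T]$; equivalently the elementary symmetric functions satisfy $e_k(\{\varepsilon v+\varepsilon\phi(y_i)\}_i)=c_k(E')$ for every $k$.

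Finally I would substitute. Using the hypothesis in the form $\psi(\pi^\ast p(B))=\pi'^\ast p(B')$ together with the formula above,
$$\psi(p(P(E)))=\pi'^\ast p(B')\cdot\prod_i\bigl(1+(\varepsilon x'+v+\phi(y_i))^2\bigr)=\pi'^\ast p(B')\cdot\prod_i\bigl(1+(x'+\varepsilon v+\varepsilon\phi(y_i))^2\bigr),$$
the last step because $\varepsilon^2=1$. The product $\prod_i(1+(x'+s_i)^2)$ is symmetric in $s_1,\dots,s_r$ with coefficients polynomial in $x'$, hence is a polynomial in $x'$ and the $e_k(s_i)$; since $e_k(\{\varepsilon v+\varepsilon\phi(y_i)\})=e_k(\{y'_j\})$, it equals $\prod_j(1+(x'+y'_j)^2)=p(\gamma'^\ast\otimes\pi'^\ast E')$. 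Hence $\psi(p(P(E)))=\pi'^\ast p(B')\cdot p(\gamma'^\ast\otimes\pi'^\ast E')=p(P(E'))$, as claimed.

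The step I expect to be the main obstacle is the middle one: one cannot assert directly that $\psi(x)$ has unit leading coefficient, so the passage to the quotient rings $\Z[x]/(x^r)$ is essential, and one must transport the Chern-class relation carefully while tracking the sign $\varepsilon$ — which turns out to be harmless only because $\varepsilon$ enters the Pontryagin classes squared. The torsion-freeness of $H^\ast(B)$, already in force, is what upgrades multiplicativity of $p$ from ``modulo $2$-torsion'' to an exact equality and so lets the computation close.
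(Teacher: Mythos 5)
Your argument is correct, and since the paper states this proposition only as a citation of \cite{ch-ma-su10} without reproducing a proof, there is no in-paper argument to diverge from; your route (splitting $TP(E)$ via the relative Euler sequence to get $p(P(E))=\pi^\ast p(B)\cdot\prod_i(1+(x+\pi^\ast y_i)^2)$, showing $\psi(x)=\pm x'+v$ with $v\in H^2(B')$, and transporting the monic defining relation to match elementary symmetric functions) is essentially the standard proof given in that reference. The points you flag — torsion-freeness to make $p$ strictly multiplicative, and the sign $\varepsilon$ being absorbed by squaring — are exactly the right ones to worry about, and you handle them correctly.
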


    \begin{lemma}\cite{ch-ma-su10}\label{lem:preserve stiefel whitney class}
        Let $M$ be a connected closed manifold of dimension $n$. Suppose that $H^\ast(M)$ is generated by $H^r(M)$ for some $r$ as a ring, and let $M'$ be another connected manifold of dimension $n$ such that $H^\ast(M';\Z/2)$ is isomorphic to $H^\ast(M;\Z/2)$ as rings. Then,
        $\psi(w(M))=w(M')$ for any ring isomorphism $\psi\colon H^\ast(M;\Z/2) \to H^\ast(M';\Z/2)$.
    \end{lemma}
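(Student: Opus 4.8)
The plan is to derive the lemma from Wu's formula. Recall that for a closed $n$-manifold $M$ one has $w(M)=\mathrm{Sq}(v_M)$, where $v_M=\sum_{k}v_k(M)\in H^\ast(M;\Z/2)$ is the total Wu class and $\mathrm{Sq}=\sum_i\mathrm{Sq}^i$ is the total Steenrod square; here $v_k(M)\in H^k(M;\Z/2)$ is characterised by $\langle v_k(M)\cup y,\mu_M\rangle=\langle\mathrm{Sq}^ky,\mu_M\rangle$ for every $y\in H^{n-k}(M;\Z/2)$, where $\mu_M$ is the mod $2$ fundamental class and the pairing is non-degenerate by Poincar\'e duality. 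Thus it suffices to show that $\psi$ intertwines the total Steenrod squares and carries $v_M$ to $v_{M'}$; then $\psi(w(M))=\psi(\mathrm{Sq}(v_M))=\mathrm{Sq}(\psi(v_M))=\mathrm{Sq}(v_{M'})=w(M')$. (We understand $\psi$ to preserve the grading; this is the usual convention for isomorphisms of cohomology rings and it is genuinely needed, since, for instance, $H^\ast(\CP^2;\Z/2)$ admits an ungraded ring automorphism $x\mapsto x+x^2$ which does not fix $w(\CP^2)=1+x+x^2$.)

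The heart of the matter is that a graded ring generated in a single degree leaves no room for intermediate Steenrod squares. Since $H^\ast(M;\Z/2)$ is again generated by $H^r(M;\Z/2)$ — immediate when $H^\ast(M;\Z)$ is torsion free, which is all that is needed here — the group $H^j(M;\Z/2)$ vanishes whenever $r\nmid j$, as every class is a polynomial in classes of degree $r$. Hence, for a generator $x\in H^r(M;\Z/2)$, we have $\mathrm{Sq}^0x=x$, $\mathrm{Sq}^rx=x^2$, $\mathrm{Sq}^ix=0$ for $i>r$, and $\mathrm{Sq}^ix=0$ for $0<i<r$ because $\mathrm{Sq}^ix\in H^{r+i}(M;\Z/2)$ with $r<r+i<2r$ not a multiple of $r$; therefore $\mathrm{Sq}(x)=x+x^2$ on all of $H^r(M;\Z/2)$. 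By the Cartan formula the total square is a ring endomorphism of $H^\ast(M;\Z/2)$, hence the unique one with this effect on degree-$r$ generators, and the same description holds for $M'$. As $\psi$ is a grading-preserving ring isomorphism it maps $H^r(M;\Z/2)$ onto $H^r(M';\Z/2)$, so $\psi(\mathrm{Sq}(x))=\psi(x)+\psi(x)^2=\mathrm{Sq}(\psi(x))$ for generators $x$; multiplicativity of $\psi$ and of $\mathrm{Sq}$ then gives $\psi\circ\mathrm{Sq}=\mathrm{Sq}\circ\psi$, and, degree by degree, $\psi\circ\mathrm{Sq}^k=\mathrm{Sq}^k\circ\psi$.

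Finally I would check $\psi(v_M)=v_{M'}$. Being an isomorphism of the cohomology rings of closed connected $n$-manifolds, $\psi$ restricts to an isomorphism $H^n(M;\Z/2)\cong H^n(M';\Z/2)$ (both groups are $\Z/2$), hence is compatible with evaluation against $\mu_M$ and $\mu_{M'}$ and preserves the Poincar\'e pairing. For $y'\in H^{n-k}(M';\Z/2)$ write $y'=\psi(y)$; using that $\psi$ is a ring map, commutes with $\mathrm{Sq}^k$, and respects the pairing,
\[
\langle\psi(v_k(M))\cup y',\mu_{M'}\rangle=\langle v_k(M)\cup y,\mu_M\rangle=\langle\mathrm{Sq}^ky,\mu_M\rangle=\langle\mathrm{Sq}^ky',\mu_{M'}\rangle,
\]
and the right-hand side is $\langle v_k(M')\cup y',\mu_{M'}\rangle$ by the defining property of $v_k(M')$. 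As $y'$ was arbitrary, non-degeneracy of the pairing on $M'$ forces $\psi(v_k(M))=v_k(M')$; summing over $k$ gives $\psi(v_M)=v_{M'}$, which completes the argument as indicated above.

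I do not expect any substantive obstacle: the single genuine idea — that single-degree generation kills the intermediate Steenrod squares on generators, so that $\mathrm{Sq}$ becomes a purely ring-theoretic operation and is respected by any graded ring isomorphism — is essentially immediate, and what remains is bookkeeping, namely that $\psi$ is graded (necessary, as the $\CP^2$ example shows) and respects the fundamental class, and that $H^\ast(M;\Z/2)$ inherits single-degree generation (clear for the torsion-free rings occurring for projective bundles over toric surfaces).
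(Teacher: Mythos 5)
Your argument is correct, and it is the standard proof of this lemma: the paper itself gives no proof (it cites \cite{ch-ma-su10}), and the proof there proceeds exactly as you do, via Wu's formula $w(M)=\mathrm{Sq}(v_M)$ together with the observation that single-degree generation forces $\mathrm{Sq}$ to be the ring endomorphism determined by $x\mapsto x+x^2$ on degree-$r$ classes, hence to commute with any graded ring isomorphism. Your added remarks — that gradedness of $\psi$ is genuinely needed (the $\CP^2$ example) and that passing from integral to mod~$2$ single-degree generation uses torsion-freeness — are correct and worth keeping.
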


    \begin{theorem}[Theorem~\ref{main2}]\label{thm:CP^1-bundles over a 4-dim toric manifold}
        Let $M$ and $M'$ be projective bundles over $4$-dimensional quasitoric manifolds $B$ and $B'$ with the fiber space $\CP^1$, respectively. If $H^\ast(M)\cong H^\ast(M')$, then $M$ and $M'$ are diffeomorphic.
    \end{theorem}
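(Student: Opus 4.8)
The plan is to reduce the whole statement to the Wall--Jupp diffeomorphism classification of closed simply connected $6$-manifolds with torsion-free homology (\cite{Wall}, \cite{Jupp}). First I would check that $M$ and $M'$ belong to that class: a projective bundle $P(E)$ over a $4$-dimensional quasitoric manifold $B$ is simply connected (both $B$ and the fiber $\CP^1$ are), and since $H^\ast(P(E))$ is a free module over the torsion-free ring $H^\ast(B)$ with basis $\{1,x\}$, its integral cohomology is torsion-free and concentrated in even degrees; in particular $H^3(M)=H^3(M')=0$, so the Wall--Jupp invariants reduce to the cohomology ring together with the first Pontryagin class $p_1$ and the second Stiefel--Whitney class $w_2$. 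Consequently, if the given graded ring isomorphism $\phi\colon H^\ast(M)\to H^\ast(M')$ can be shown to carry $p_1(M)$ to $p_1(M')$ and $w(M)$ to $w(M')$, then $M$ and $M'$ are diffeomorphic, and we are done.

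The Stiefel--Whitney part costs nothing: since $H^\ast(M)$ and $H^\ast(M')$ are generated in degree $2$, Lemma~\ref{lem:preserve stiefel whitney class} shows that the mod $2$ reduction of \emph{any} graded ring isomorphism, in particular of $\phi$, already sends $w(M)$ to $w(M')$. So the real content is the Pontryagin class. Put $m=\beta_2(B)$; since $\beta_2(M)=m+1$, the hypothesis $H^\ast(M)\cong H^\ast(M')$ forces $\beta_2(B')=m$ as well. When $m\ge 3$ I would argue as follows: by Lemma~\ref{prop:induce base isomorphism}, $\phi$ restricts to a graded ring isomorphism $H^\ast(B)\to H^\ast(B')$; by Lemma~\ref{lem:4-dim'l quasitoric manifolds} this restriction sends $p(B)$ to $p(B')$; and Proposition~\ref{prop:Pontrjagin class preserving} then promotes this to $\phi(p(M))=p(M')$. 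Combined with the preceding paragraph, Wall--Jupp settles the case $m\ge 3$.

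It remains to treat $m\le 2$, where $B$ and $B'$ are $4$-dimensional quasitoric manifolds with $\beta_2\le 2$, hence each diffeomorphic to one of $\CP^2$, $\CP^1\times\CP^1$, $\CP^2\#\CP^2$, $\CP^2\#\overline{\CP^2}$. If $B$ is diffeomorphic to $\CP^2$ then $\beta_2(B')=1$ forces $B'$ diffeomorphic to $\CP^2$ too, so $M$ and $M'$ are $\CP^1$-bundles over $\CP^2$ (equivalently, two-stage generalized Bott manifolds), and such bundles are classified up to diffeomorphism by their cohomology rings by \cite{ch-ma-su10}. If one of $B,B'$ is diffeomorphic to a Hirzebruch surface ($\CP^1\times\CP^1$ or $\CP^2\#\overline{\CP^2}$), then the corresponding total space is a three-stage Bott manifold; since $H^\ast(M)\cong H^\ast(M')$, the result of \cite{CS11} forces the other total space also to be diffeomorphic to a Bott manifold, and the conclusion follows from the (known) cohomological rigidity of Bott manifolds of real dimension $\le 6$ (see \cite{ch-ma-su11}). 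The only remaining possibility is that $B$ and $B'$ are both diffeomorphic to $\CP^2\#\CP^2$; here Lemma~\ref{lem:CP2sharpCP2} guarantees that $\phi$ restricts to a graded ring isomorphism $H^\ast(B)\to H^\ast(B')$, after which the argument of the case $m\ge 3$ (Lemma~\ref{lem:4-dim'l quasitoric manifolds} together with Proposition~\ref{prop:Pontrjagin class preserving}) applies verbatim.

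The main obstacle is exactly the Pontryagin class. A priori, a graded ring isomorphism of $H^\ast(M)$ and $H^\ast(M')$ need not respect the subrings $H^\ast(B)$ and $H^\ast(B')$ pulled back from the bundle projections, and $p_1$ is genuinely extra data not recoverable from the ring; so one cannot simply quote Wall--Jupp. The role of Lemmas~\ref{prop:induce base isomorphism} and \ref{lem:CP2sharpCP2} is precisely to force the base subring to be preserved (whenever $\beta_2(B)\ge 3$, or $B\simeq\CP^2\#\CP^2$), after which Proposition~\ref{prop:Pontrjagin class preserving} -- a functorial form of the splitting of $p(P(E))$ into $\pi^\ast p(B)$ and a fiberwise term -- does the rest; the few leftover low-$\beta_2$ cases, where no such base-preservation is available, are absorbed by earlier rigidity results for two- and three-stage (generalized) Bott manifolds.
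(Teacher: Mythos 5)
Your proposal is correct and follows essentially the same route as the paper: reduce to the Wall--Jupp classification by showing the given isomorphism preserves $w$ (Lemma~\ref{lem:preserve stiefel whitney class}) and $p$ (via base-preservation from Lemma~\ref{prop:induce base isomorphism} and Lemma~\ref{lem:CP2sharpCP2}, then Lemma~\ref{lem:4-dim'l quasitoric manifolds} and Proposition~\ref{prop:Pontrjagin class preserving}), with the remaining low-$\beta_2$ cases absorbed by the known rigidity of two-stage generalized Bott manifolds and three-stage Bott manifolds. If anything, your handling of the Hirzebruch-base case (invoking \cite{CS11} to force the other total space to also be a Bott manifold before applying rigidity) is slightly more careful than the paper's, which tacitly assumes both total spaces lie in the same (generalized) Bott class.
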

    \begin{proof}
        If a quasitoric manifold $B$ is either $\CP^2$ or a Hirzebruch surface, then $M$ is equivalent to either a two-stage generalized Bott manifold or a three-stage Bott manifold. In these two cases, they are classified by their cohomology rings up to diffeomorphism in~\cite{ch-ma-su10}.

        We only need to prove the case when $M$ is neither a two-stage generalized Bott manifold nor a three-stage Bott manifold. That is, $B$ and $B'$ are equivalent to $\CP^2\#\CP^2$, or the orbit spaces of $B$ and $B'$ are $G(m+2)$ with $m\geq 3$. By Lemma~\ref{prop:induce base isomorphism} and Lemma \ref{lem:CP2sharpCP2}, any graded ring isomorphism $\psi$ from $H^\ast(M)$ to $H^\ast(M')$ induces an isomorphism between $H^\ast(B)$ and $H^\ast(B')$. Then, by Lemma~\ref{lem:4-dim'l quasitoric manifolds}, $\psi$ satisfies $\psi(p(B))=p(B')$. Hence, by Proposition~\ref{prop:Pontrjagin class preserving}, $\psi$ satisfies $\psi(p(M))=p(M')$. By Lemma~\ref{lem:preserve stiefel whitney class}, $\psi$ also preserves Stiefel-Whitney classes. Hence, the isomorphism $\psi$ preserves their Stiefel-Whitney classes and Pontryagin classes. Therefore, the $6$-dimensional \toricmanifolds $M$ and $M'$ are diffeomorphic by~\cite{Jupp} and \cite{Wall}.
    \end{proof}

    \section{Topology of higher dimensional projective bundles} \label{section:higher_projective_bdl}

    \begin{lemma}\label{lem:relationshp betweewn total chern and bundle}{\cite{Pet}}
        Let $X$ be a finite CW-complex such that $H^{odd}(X)=0$ and $H^\ast(X)$ has no torsion. Then, complex $n$-dimensional vector bundles over $X$ with $2n\geq \dim X$ are isomorphic if and only if their total Chern classes are the same.
    \end{lemma}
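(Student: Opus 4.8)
The statement concerns the classification of complex vector bundles, so the plan is to translate it into homotopy theory and $K$-theory. A rank-$n$ complex bundle over $X$ is a homotopy class of maps $X\to BU(n)$, and the total Chern class records the pullback of the universal classes $c_k\in H^{2k}(BU(n);\Z)$. The first step is to show that, under the dimension hypothesis $2n\geq\dim X$, the stabilization map $[X,BU(n)]\to[X,BU]$ is a bijection. This is obstruction theory: the homotopy fibre of $BU(n)\hookrightarrow BU(n+1)$ is $U(n+1)/U(n)\cong S^{2n+1}$, which is $2n$-connected, so the composite $BU(n)\to BU$ induces an isomorphism on $\pi_i$ for $i\leq 2n$ and a surjection on $\pi_{2n+1}$; hence the induced map on $[X,-]$ is a bijection whenever the finite CW-complex $X$ has $\dim X\leq 2n$. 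Consequently two rank-$n$ bundles over $X$ are isomorphic if and only if they are stably isomorphic, i.e.\ have the same class in $K^0(X)$.

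Next I would use the cohomological hypotheses to pin down $K^0(X)$. Since $H^{odd}(X)=0$ and $H^\ast(X)$ is torsion free, the Atiyah--Hirzebruch spectral sequence computing $K^\ast(X)$ has $E_2^{p,q}=H^p(X;\Z)$ for $q$ even and $E_2^{p,q}=0$ for $q$ odd; every differential out of or into the line $p+q=0$ would map a group to one in which either the vertical index has odd parity or the horizontal index is odd (so the group vanishes), whence the spectral sequence collapses at $E_2$ along this line. Therefore $K^0(X)$ is a finite iterated extension of the torsion-free groups $H^{2k}(X;\Z)$, hence itself torsion free, and $K^0(X)\to K^0(X)\otimes\Q$ is injective. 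Composing with the Chern character isomorphism $\mathrm{ch}\colon K^0(X)\otimes\Q\to H^{even}(X;\Q)$ shows that $\mathrm{ch}$ is injective on $K^0(X)$; in particular two bundles determine the same class in $K^0(X)$ if and only if they have the same Chern character.

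It remains to compare the Chern character with the total Chern class. Writing a rank-$n$ bundle via Chern roots $x_1,\dots,x_n$, one has $\mathrm{ch}=\sum_i e^{x_i}$ and $c=\prod_i(1+x_i)$; by Newton's identities the power sums $p_k=\sum_i x_i^k$, equivalently the components $\mathrm{ch}_k=p_k/k!$, are universal rational polynomials in $c_1,\dots,c_k$, and conversely. Hence, for bundles of the \emph{same} rank $n$, equality of total Chern classes is equivalent to equality of Chern characters. Chaining the three steps: if $c(E)=c(E')$ then $\mathrm{ch}(E)=\mathrm{ch}(E')$, so $[E]=[E']$ in $K^0(X)$, so $E$ and $E'$ are isomorphic because $2n\geq\dim X$; the converse is immediate since Chern classes are bundle invariants. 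I expect the main obstacle to be the first, unstable-to-stable step: verifying the connectivity estimate for $BU(n)\to BU$ and running the obstruction argument carefully enough to see that $2n\geq\dim X$ is precisely the hypothesis that makes rank-$n$ bundles rigid under stabilization; the remaining $K$-theory collapse and the characteristic-class bookkeeping are formal.
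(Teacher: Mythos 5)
Your proof is correct, but note that the paper does not prove this lemma at all --- it is quoted from Peterson \cite{Pet}, so there is no in-paper argument to compare against; the relevant comparison is with Peterson's original 1959 proof, which proceeds by obstruction theory on the Postnikov decomposition of $BU(n)$ in dimensions at most $2n$, using control of the torsion in the relevant homotopy groups. Your argument is the standard modern $K$-theoretic replacement, and all three of your steps check out: the fibration $S^{2n+1}\to BU(n)\to BU(n+1)$ and its successors show that $[X,BU(n)]\to[X,BU]$ is bijective for $\dim X\leq 2n$ (this is exactly where the rank hypothesis enters); the checkerboard vanishing $E_2^{p,q}=0$ for $p$ or $q$ odd forces the Atiyah--Hirzebruch spectral sequence to degenerate, and torsion-freeness of the associated graded pieces $H^{2k}(X;\Z)$ makes $K^0(X)$ torsion free, hence the Chern character is injective on it; and Newton's identities identify, for bundles of a fixed rank, equality of total Chern classes with equality of Chern characters. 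One small economy worth noting: only the implication $c(E)=c(E')\Rightarrow \mathrm{ch}(E)=\mathrm{ch}(E')$ is needed, and since $k!\,\mathrm{ch}_k$ is an integral polynomial in $c_1,\dots,c_k$ this direction holds integrally, so the torsion-freeness of $H^\ast(X)$ is genuinely used only to conclude that $K^0(X)$ is torsion free. What your route buys over the cited one is brevity and transparency about where each hypothesis is used; what it costs is reliance on Bott periodicity and the Atiyah--Hirzebruch spectral sequence, machinery that postdates the original reference.
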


    \begin{proposition}\label{prop:SCR}
        Assume that two Whitney sums of complex line bundles over a quasitoric manifold $B$ are isomorphic if and only if their total Chern classes are the same. Let $E=\CC\oplus L_1 \oplus\cdots\oplus L_n$ be the Whitney sum of complex line bundles over $B$. If
        $\varphi$ is an $H^\ast(B)$-algebra automorphism of $H^\ast(P(E))$,
        then $\varphi$ is induced by a self-diffeomorphism on $P(E)$.
    \end{proposition}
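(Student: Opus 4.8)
The plan is to first determine all $H^\ast(B)$-algebra automorphisms of $H^\ast(P(E))$ explicitly, and then to realize each of them by a self-diffeomorphism, using Lemma~\ref{lem:isomorphic projective bundles} together with the standing hypothesis to convert an equality of total Chern classes into an honest bundle isomorphism. Write $t_0=0$ and $t_i=c_1(L_i)\in H^2(B)$ for $i=1,\dots,n$, so that by~\eqref{eq:cohom_quasi_bundle} the ring $H^\ast(P(E))=H^\ast(B)[x]/\bigl(\prod_{i=0}^n(x+t_i)\bigr)$ is free over $H^\ast(B)$ with basis $1,x,\dots,x^n$. Since $\varphi$ fixes $H^\ast(B)$ pointwise, is degree preserving, and $H^2(P(E))=H^2(B)\oplus\Z x$, it must send $x\mapsto cx+\alpha$ with $c\in\Z$ and $\alpha\in H^2(B)$; comparing with $\varphi^{-1}$ forces $c=\pm1$. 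First I would exploit that $\varphi$ descends to the quotient: the degree-$(n+1)$ polynomial $\prod_{i=0}^n(cx+\alpha+t_i)$ must vanish in $H^\ast(P(E))$, and since it has unit leading coefficient $c^{n+1}$ while the defining relation is monic of the same degree, polynomial division yields the identity $\prod_{i=0}^n(cx+\alpha+t_i)=c^{n+1}\prod_{i=0}^n(x+t_i)$ in $H^\ast(B)[x]$. Equivalently, the multisets $\{c(\alpha+t_i)\}_{i=0}^n$ and $\{t_i\}_{i=0}^n$ have the same elementary symmetric functions.

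Comparing first symmetric functions in the case $c=1$ gives $(n+1)\alpha=0$; as $H^2(B)$ is torsion free, $\alpha=0$ and $\varphi$ is the identity, realized by the identity map. In the case $c=-1$ the identity reads $\{-\alpha-t_i\}_{i=0}^n=\{t_i\}_{i=0}^n$, and summing yields $(n+1)\alpha=-2\sum_i t_i$; since $H^2(B)$ is torsion free this determines $\alpha$ uniquely, so there is \emph{at most one} automorphism with $c=-1$. To realize it, I would take $L$ to be the line bundle with $c_1(L)=-\alpha$ and put $E'=E^\ast\otimes L=\bigoplus_{i=0}^n(L_i^\ast\otimes L)$, whose $i$-th summand has first Chern class $-\alpha-t_i$. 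The multiset identity then gives $c(E')=\prod_{i=0}^n\bigl(1+(-\alpha-t_i)\bigr)=\prod_{i=0}^n(1+t_i)=c(E)$, so by the standing hypothesis $E'\cong E$ as bundles over $B$. Composing the diffeomorphism $P(E)\cong P(E^\ast)\cong P(E^\ast\otimes L)=P(E')$ of Lemma~\ref{lem:isomorphic projective bundles} with the fibre-preserving diffeomorphism $P(E')\to P(E)$ coming from $E'\cong E$ produces a self-diffeomorphism $g$ of $P(E)$ covering $\mathrm{id}_B$.

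It remains to check $g^\ast=\varphi$. Since $g$ covers $\mathrm{id}_B$, the induced map $g^\ast$ is again an $H^\ast(B)$-algebra automorphism, so $g^\ast(x)=c'x+\alpha'$ with $c'=\pm1$; by the uniqueness established above it suffices to verify that $c'=-1$ rather than $+1$. I would secure this sign by taking the duality step to be fibrewise complex conjugation $E\to\overline E\cong E^\ast$, which covers $\mathrm{id}_B$ and reverses orientation on each $\CP^n$-fibre, hence acts by $-1$ on the fibre class $x$; the subsequent twist by $L$ and the bundle isomorphism $E'\cong E$ both preserve this sign. I expect this last point — pinning down that the geometric duality contributes $c'=-1$ and covers the identity on the base, so that $g^\ast$ is the unique $c=-1$ automorphism $\varphi$ — to be the main obstacle, together with the careful use of the hypothesis that is the conceptual heart of the argument: it is precisely what upgrades the cohomological equality $c(E')=c(E)$ to a genuine isomorphism of bundles, and hence to a diffeomorphism inducing $\varphi$.
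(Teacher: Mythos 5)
Your proposal is correct and follows essentially the same route as the paper's proof: reduce to $\varphi(x)=\pm x+\omega$, show the sign $+1$ forces $\varphi=\mathrm{id}$, and in the sign $-1$ case use the Chern-class hypothesis to upgrade $c(E)=c(E^\ast\otimes\gamma^{-\omega})$ to a bundle isomorphism, realizing $\varphi$ by the composite of the dualization, twist, and bundle-isomorphism diffeomorphisms as in Lemma~\ref{lem:isomorphic projective bundles}. The only deviation is in the final identification $g^\ast=\varphi$, where the paper explicitly computes the pullbacks $b^\ast(y)=-x$ and $c^\ast(x)=y+\omega$ (citing \cite{CP12}) while you invoke uniqueness of the sign $-1$ automorphism together with a fiberwise sign check; that check is valid, but justify it by noting that conjugation pulls the tautological bundle back to its conjugate and hence negates the fiber class, rather than by orientation reversal of the fiber, which fails for even $n$ (conjugation on $\CP^n$ preserves orientation when $n$ is even, yet still acts by $-1$ on $H^2$).
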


    \begin{proof}
        Before the proof, the authors would like to inform that the the proof is quite similar to the proof of Proposition 4.3 in \cite{CP12}. We give the brief proof in order to avoid repeating the same argument.

        Let $x$ be the negative of the first Chern class of the tautological line bundle over $P(E)$. Set $c_1(L_i)=\alpha_i\in H^2(B)$, and write $\varphi(x)=\epsilon x+\omega$, where $\epsilon=\pm q$ and $\omega\in H^2(B)$. Denotes $\gamma^\alpha$ be the line bundle over $B$ whose first Chern class is $\alpha\in H^2(B)$. Remark that $L_i=\gamma^{\alpha_i}$ for all $i$.

        (I) First, assume that
        $\varphi(x)=x+\omega$. Then we have
        \begin{equation}\label{eq:s=1}
            (x+\omega)(x+\omega+\alpha_1)\cdots(x+\omega+\alpha_n)=x(x+\alpha_1)\cdots(x+\alpha_n)
        \end{equation}
        in $H^\ast(P(E))$. Comparing the coefficients of $x^n$ in both sides of \eqref{eq:s=1}, we can see that
        $\omega=0$.
        Hence, $\varphi$ is the identity.

        (II) Now assume that
        $\varphi(x)=-x+\omega$. Then we have
        \begin{equation}\label{eq:s=-1}
            (-x+\omega)(-x+\omega+\alpha_1)\cdots(-x+\omega+\alpha_n)=(-1)^{n+1}x(x+\alpha_1)\cdots(x+\alpha_n)
        \end{equation}
        in $H^\ast(P(E))$. Comparing the coefficients of $x^n$ in both sides of \eqref{eq:s=-1}, we can see that
        $\omega=-\frac{2}{n+1}\sum_{i=1}^n\alpha_i$.
        By substituting $x=1$ into \eqref{eq:s=-1}, we obtain
        \begin{equation}\label{eq:compare chern classes}
            (1-\omega)(1-\omega-\alpha_1)\cdots(1-\omega-\alpha_n)=(1+\alpha_1)\cdots(1+\alpha_n)
        \end{equation}
        in $H^*(B)$. By hypothesis, we can see that
        \begin{equation*}
            \begin{split}
                &\CC\oplus\gamma^{\alpha_1}\oplus\cdots\oplus\gamma^{\alpha_n}\\
                &=\gamma^{-\omega} \oplus \gamma^{-\omega-\alpha_1}\oplus\cdots\oplus\gamma^{-\omega-\alpha_n}\\
                &=\gamma^{-\omega}\otimes (\gamma^{-\alpha_1}\oplus\cdots\oplus\gamma^{-\alpha_n})
            \end{split}
        \end{equation*}

        Note that since $E$ possesses a Hermitian metric, its dual bundle $E^\ast=\Hom(E,\CC)$ is canonically isomorphic to the conjugate bundle $\CC\oplus\gamma^{-\alpha_1}\oplus\cdots\oplus\gamma^{-\alpha_n}$. Furthermore, the bundle map $$b\colon P(\CC\oplus\gamma^{\alpha_1}\oplus\cdots\oplus\gamma^{\alpha_n})\to P(\CC\oplus\gamma^{-\alpha_1}\oplus\cdots\oplus\gamma^{-\alpha_n})$$ induces an isomorphism $b^\ast\colon H^\ast(P(E^\ast))\to H^\ast(P(E))$ such that $b^\ast(y)=-x$, where $y$ is the negative of the first Chern class of the tautological line bundle over $P(E^\ast)$.

        Since $\gamma^{-\omega}$ is a line bundle, there exists a bundle isomorphism $$c\colon P(\CC\oplus\gamma^{-\alpha_1}\oplus\cdots\oplus\gamma^{-\alpha_n}) \to P((\CC\oplus\gamma^{-\alpha_1}\oplus\cdots\oplus\gamma^{-\alpha_n}) \otimes\gamma^{-\omega})$$
        which preserves the complex structures on each fiber. One can show that $c^\ast(x)=y+\omega$. Therefore,
        $$b^\ast(c^\ast(x))=-x+\omega.$$

        See \cite{CP12} for the explicit constructions of two maps $b$ and $c$, and the computations of $b^\ast(y)$ and $c^\ast(x)$.
    \end{proof}

    We recall again the smooth classification of $4$-dimensional quasitoric manifolds from \cite{OR}. Any $4$-dimensional quasitoric manifold is a connected sum of several copies of $\CP^2$, $\overline{\CP^2}$ and $\CP^1\times\CP^1$.
    Let $\mathcal{B}$ be the set of $4$-dimensional quasitoric manifolds which cannot be expressed as $\CP^2\#n\overline{\CP^2}$ or $n\CP^2\#\overline{\CP^2}$ for $n>9$.

    \begin{theorem}\cite{Wall64} \label{thm:Wall64}
        Let $B \in \mathcal{B}$. Then any cohomology ring automorphism of $B$ is realizable by a self-diffeomorphism of $B$.
    \end{theorem}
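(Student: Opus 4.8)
The plan is to reduce the statement to a question about the intersection form of $B$ and then to realise generators of its isometry group by explicit self-diffeomorphisms. Since $B$ is a connected sum of copies of $\CP^2$, $\overline{\CP^2}$ and $\CP^1\times\CP^1$, the cup product $H^2(B)\otimes H^2(B)\to H^4(B)\cong\Z$ is a unimodular symmetric bilinear form, so $H^\ast(B)$ is generated as a ring by $H^2(B)$. Hence a graded ring automorphism $\phi$ of $H^\ast(B)$ is determined by $\phi|_{H^2(B)}$, which is an isometry of the intersection form $Q_B$ when $\phi$ acts trivially on $H^4(B)$, and an isometry $Q_B\to -Q_B$ otherwise; the latter occurs only when $Q_B\cong -Q_B$, and then composing with an orientation-reversing self-diffeomorphism of $B$ (which exists in that case) reduces us to the orientation-preserving situation. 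Thus it suffices to show that every element of $O(Q_B)$ is induced by an orientation-preserving self-diffeomorphism of $B$.

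Next I would stratify $B$. Using the diffeomorphisms $(\CP^1\times\CP^1)\#\CP^2\approx 2\CP^2\#\overline{\CP^2}$ and, for $p,q\ge 1$, $(\CP^1\times\CP^1)\#(p\CP^2\#q\overline{\CP^2})\approx (p+1)\CP^2\#(q+1)\overline{\CP^2}$ (these are diffeomorphic since they are quasitoric with isomorphic odd indefinite unimodular forms, by \cite{OR}), every $4$-dimensional quasitoric manifold is diffeomorphic to one of: $\#^a(\CP^1\times\CP^1)$; $\#^p\CP^2$ or $\#^q\overline{\CP^2}$; or $p\CP^2\#q\overline{\CP^2}$ with $p,q\ge 1$. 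Membership in $\mathcal{B}$ restricts the last family to $p,q\ge 2$ together with $\{p,q\}=\{1,k\}$ for $1\le k\le 9$, and I would treat the resulting strata separately.

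For $\#^p\CP^2$, $\#^q\overline{\CP^2}$ and for $\CP^2\#\overline{\CP^2}$, the group $O(Q_B)$ is the group of signed permutations of an orthogonal basis: permutations are realised by permuting the connected summands, and the sign change $e_i\mapsto -e_i$, $e_j\mapsto e_j$ $(j\ne i)$, is realised by complex conjugation on the $i$-th summand, which is orientation-preserving. For $\#^a(\CP^1\times\CP^1)$ with $a\ge 1$ and for $p\CP^2\#q\overline{\CP^2}$ with $p,q\ge 2$, I would split off a $\CP^1\times\CP^1$ summand, writing $B\approx N\#(\CP^1\times\CP^1)$, and invoke Wall's realisation theorem: every automorphism of $Q_N\oplus H$ is induced by a self-diffeomorphism of $N\#(\CP^1\times\CP^1)$, the hyperbolic summand supplying the room needed to realise reflections in $(-2)$-spheres and the transvections of $H$. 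Since $Q_B\cong Q_N\oplus H$, this disposes of these cases.

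The remaining and hardest cases are $B=\CP^2\#n\overline{\CP^2}$ with $2\le n\le 9$ (the manifolds $n\CP^2\#\overline{\CP^2}$ being the same with reversed orientation). Here $B$ is a rational complex surface, an $n$-fold blow-up of $\CP^2$ (del Pezzo for $n\le 8$, the rational elliptic surface for $n=9$), with form $\langle 1\rangle\oplus\langle -1\rangle^{n}$. Beyond the signed permutations, one must realise the reflections in $(-1)$-classes, by diffeomorphisms supported near an exceptional sphere, and a Cremona-type isometry ($h\mapsto 2h-e_1-e_2-e_3$, cyclic on $e_1,e_2,e_3$, fixing the remaining $e_i$), induced by the diffeomorphism coming from the standard quadratic Cremona transformation of $\CP^2$, equivalently by the monodromy of a degeneration to a nodal surface. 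The crux is the arithmetic fact that for $n\le 9$ these geometrically realised isometries generate all of $O(\langle 1\rangle\oplus\langle -1\rangle^{n})$, whereas for $n\ge 10$ the subgroup they generate has infinite index in this hyperbolic reflection group — this is precisely why $\mathcal{B}$ is defined by excluding $\CP^2\#n\overline{\CP^2}$ and $n\CP^2\#\overline{\CP^2}$ for $n>9$. Granting the generation statement, every isometry of $Q_B$ is a product of realised ones and hence induced by a self-diffeomorphism. I expect the main work to lie in (i) the honest construction of the Cremona-type diffeomorphism inducing the prescribed action on $H^2$, and (ii) the reflection-group computation isolating $n=9$ as the threshold; both are carried out in \cite{Wall64}.
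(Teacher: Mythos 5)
First, be aware that the paper contains no proof of this statement: Theorem~\ref{thm:Wall64} is quoted directly from Wall's paper \cite{Wall64}, with only the added remark that $\mathcal{B}$ is maximal by \cite{FM}. So your proposal is a reconstruction of Wall's argument rather than something to be matched against an argument in the text. The overall shape is right: reduce a graded ring automorphism to a $(\pm)$-isometry of the intersection form (using that $H^\ast(B)$ is generated by $H^2(B)$ and that an anti-isometry can be absorbed into an orientation-reversing diffeomorphism), kill the definite and rank-one cases with signed permutations of summands and complex conjugation, and realize the orthogonal group in the indefinite cases; those reductions are all fine.

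The problem is that your treatment of the odd indefinite cases is internally inconsistent. Blowing up $\CP^1\times\CP^1$ at a point gives $\CP^2\#2\overline{\CP^2}$, so $\CP^2\#n\overline{\CP^2}\approx(n-1)\overline{\CP^2}\#(\CP^1\times\CP^1)$ for every $n\geq2$; hence the same appeal to Wall's stabilization--realization theorem that you make for $p\CP^2\#q\overline{\CP^2}$ with $p,q\geq2$ already covers $p=1$, $q\geq2$ --- including $q\geq10$. That renders your ``hardest case'' (Cremona transformations and the reflection-group threshold at $n=9$) redundant, and it flatly contradicts your closing claim that realization fails for $n\geq10$ and that ``this is precisely why'' $\mathcal{B}$ excludes those manifolds. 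You cannot assert both. In fact Wall's theorem does hold in the generality you invoke, so the map $\mathrm{Diff}(B)\to O(Q_B)$ is surjective for $\CP^2\#n\overline{\CP^2}$ for every $n\geq2$; what can fail to be of finite index for $n\geq10$ is the subgroup generated by the particular geometric isometries you list, which says nothing about the image of the whole diffeomorphism group, and the $n=9$ threshold computation is not in \cite{Wall64}. Beyond this, note that every substantive ingredient --- Wall's realization theorem, the Cremona diffeomorphism, generation of $O(Q)$ by reflections --- is itself deferred back to \cite{Wall64}, so the proposal does not actually supply a proof independent of the citation the paper already makes.
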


    We remark that, by \cite{FM}, $\mathcal{B}$ is the maximal subset of the set of $4$-quasitoric manifolds, satisfying such property.

    \begin{theorem}[Theorem~\ref{main3}]\label{thm:CP^n bundles}
        Let $B,B' \in \mathcal{B}$, and let $M$ and $M'$ be projective bundles over $B$ and $B'$, respectively. Then, any graded ring isomorphism from $H^\ast(M)$ to $H^\ast(M')$ is induced by a diffeomorphism from $M'$ to $M$.
    \end{theorem}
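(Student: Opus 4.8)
The plan is to build a diffeomorphism $M'\to M$ inducing the prescribed isomorphism out of two pieces: a diffeomorphism of the base manifolds realizing the base cohomology isomorphism, and the strong rigidity over a fixed base supplied by Proposition~\ref{prop:SCR}. Fix a graded ring isomorphism $\psi\colon H^\ast(M)\to H^\ast(M')$. Comparing Poincar\'e polynomials shows that $M$ and $M'$ have fibres of the same complex dimension $n$, and Corollary~\ref{cor:orbit space of P(E)} forces $\beta_2(B)=\beta_2(B')=:m$. If $n=1$ and $m\le 2$, then by~\cite{OR} the bases are among $\CP^2$, the Hirzebruch surfaces, and $\CP^2\#\CP^2$: over $\CP^2$ or a Hirzebruch surface $M$ and $M'$ are (generalized) Bott manifolds of stage at most three and the statement is part of~\cite{ch-ma-su10}, and over $\CP^2\#\CP^2$ the argument below applies verbatim with Lemma~\ref{lem:CP2sharpCP2} in place of Lemma~\ref{prop:induce base isomorphism}. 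So I assume $n\ge 2$ or $m\ge 3$, and, by Lemma~\ref{lem:isomorphic projective bundles}, write $M=P(E)$ with $E=\CC\oplus L_1\oplus\cdots\oplus L_n$ over $B$ and $M'=P(E')$ with $E'=\CC\oplus L_1'\oplus\cdots\oplus L_n'$ over $B'$.

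First I would move the base. By Lemma~\ref{prop:induce base isomorphism}, $\psi$ maps the natural subring $H^\ast(B)\subset H^\ast(M)$ onto $H^\ast(B')\subset H^\ast(M')$ and hence induces a graded ring isomorphism $\bar\psi\colon H^\ast(B)\to H^\ast(B')$. Since the cohomology ring of a $4$-dimensional quasitoric manifold determines its diffeomorphism type, $B\cong B'$; choosing a diffeomorphism $h\colon B'\to B$, the composite $(h^\ast)^{-1}\circ\bar\psi$ is a ring automorphism of $H^\ast(B)$, and because $B\in\mathcal{B}$ it is realized by a self-diffeomorphism $g$ of $B$ by Theorem~\ref{thm:Wall64}. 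Then $\phi:=g\circ h\colon B'\to B$ satisfies $\phi^\ast=\bar\psi$. Pulling $E$ back along $\phi$ yields a Whitney sum of line bundles $\phi^\ast E$ over $B'$ together with a fibrewise-linear diffeomorphism $\widehat\phi\colon P(\phi^\ast E)\to M$ covering $\phi$ which sends the tautological class of $P(E)$ to that of $P(\phi^\ast E)$ and restricts to $\bar\psi$ on the base. Consequently $\chi:=\psi\circ(\widehat\phi^\ast)^{-1}\colon H^\ast(P(\phi^\ast E))\to H^\ast(P(E'))$ is a graded ring isomorphism restricting to the identity on the common subring $H^\ast(B')$; that is, an $H^\ast(B')$-algebra isomorphism between the cohomology rings of two projective bundles over the \emph{same} base $B'$.

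Next I would upgrade $\chi$ to a bundle isomorphism and splice. Being $H^\ast(B')$-linear and degree-preserving, $\chi$ sends the tautological class $x$ of $P(\phi^\ast E)$ to $\epsilon y+\omega$ with $\epsilon=\pm1$ and $\omega\in H^2(B')$, where $y$ is the tautological class of $P(E')$. Imposing that $\chi$ carry the single defining relation of $H^\ast(P(\phi^\ast E))$ onto that of $H^\ast(P(E'))$ and comparing coefficients of powers of $y$---the computation already carried out in the proof of Proposition~\ref{prop:SCR}---gives $c(\gamma\otimes\phi^\ast E)=c(E')$ when $\epsilon=1$ and $c(\gamma\otimes(\phi^\ast E)^\ast)=c(E')$ when $\epsilon=-1$, for a suitable line bundle $\gamma$ over $B'$ with $c_1(\gamma)=\pm\omega$. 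Since $B'$ is a $4$-dimensional quasitoric manifold (so $H^{\mathrm{odd}}(B')=0$ and $H^\ast(B')$ is torsion free) and the bundles involved have complex rank $n+1\ge 2$, Lemma~\ref{lem:relationshp betweewn total chern and bundle} turns these Chern-class identities into isomorphisms of complex vector bundles over $B'$; with Lemma~\ref{lem:isomorphic projective bundles} this produces a bundle isomorphism $\rho_0\colon P(E')\to P(\phi^\ast E)$ covering the identity of $B'$. (The same instance of Lemma~\ref{lem:relationshp betweewn total chern and bundle} also verifies the hypothesis of Proposition~\ref{prop:SCR} for $B'$.) Now $\chi^{-1}\circ\rho_0^\ast$ is an $H^\ast(B')$-algebra automorphism of $H^\ast(P(\phi^\ast E))$, hence equals $\delta^\ast$ for a self-diffeomorphism $\delta$ of $P(\phi^\ast E)$ by Proposition~\ref{prop:SCR}; putting $\rho:=\delta^{-1}\circ\rho_0$ gives $\rho^\ast=\chi$, so $\widehat\phi\circ\rho\colon M'\to M$ is a diffeomorphism with $(\widehat\phi\circ\rho)^\ast=\chi\circ\widehat\phi^\ast=\psi$.

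The step I expect to be the main obstacle is the second one: distilling from the purely algebraic $H^\ast(B')$-algebra isomorphism $\chi$ the equality of total Chern classes---up to a line-bundle twist and a dualization---that makes Petrie's theorem (Lemma~\ref{lem:relationshp betweewn total chern and bundle}) applicable. The underlying identity is the one already analysed for an automorphism in Proposition~\ref{prop:SCR}, but here it must be run between two a priori different bundle cohomology rings, so the coefficient bookkeeping has to be redone. A secondary, more clerical difficulty is the complete treatment of the low-dimensional cases $n=1$, $m\le 2$, where Lemma~\ref{prop:induce base isomorphism} is unavailable and one must instead invoke the classification of (generalized) Bott manifolds in~\cite{ch-ma-su10} together with Lemma~\ref{lem:CP2sharpCP2}.
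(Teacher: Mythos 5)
Your argument is, in its main case, the same as the paper's: restrict $\psi$ to the base subrings via Lemma~\ref{prop:induce base isomorphism} (or Lemma~\ref{lem:CP2sharpCP2}), realize the resulting base isomorphism by a diffeomorphism $B'\to B$ using Proposition~\ref{thm:quasitoric_equivalent_P(E)} and Theorem~\ref{thm:Wall64}, pull back so as to compare two projective bundles over one base with an $H^\ast$-algebra isomorphism between them, and close with Lemma~\ref{lem:relationshp betweewn total chern and bundle} and Proposition~\ref{prop:SCR}. Your final step is in fact spelled out more carefully than in the paper, which cites Proposition~\ref{prop:SCR} (stated only for automorphisms of a single $H^\ast(P(E))$) directly against an isomorphism between two different bundles; your reduction --- extract the Chern-class identity from the two defining relations, convert it into a bundle isomorphism $P(E')\cong P(\phi^\ast E)$ via Lemma~\ref{lem:relationshp betweewn total chern and bundle} and Lemma~\ref{lem:isomorphic projective bundles}, and only then apply Proposition~\ref{prop:SCR} to the residual automorphism --- is the correct way to make that step rigorous. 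The one genuine weak point is the low-dimensional case $n=1$, $m\le 2$: there you cite \cite{ch-ma-su10}, which gives only the diffeomorphism classification (cohomological rigidity), whereas the theorem asserts that \emph{every} graded ring isomorphism is realized by a diffeomorphism; for that one needs the strong rigidity results the paper actually invokes, namely \cite{CP12} for two-stage generalized Bott manifolds and \cite{Choi13} for three-stage Bott manifolds. In addition, when $n=1$ and $m=2$ the bases $B$ and $B'$ need not a priori both be Hirzebruch surfaces or both be $\CP^2\#\CP^2$; the paper excludes the mixed possibility by \cite{CS11} (a quasitoric manifold whose cohomology ring is that of a Bott manifold is equivalent to a Bott manifold), and your sketch should do the same before splitting into the two sub-cases.
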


    \begin{proof}
       Let $n$ be the dimension of the fiber of $M$, and $m = \beta_2 (B)$.

       If $m = 1$, then both $M$ and $M'$ are $2$-stage generalized Bott manifolds. By \cite{CP12}, any cohomology
       ring isomorphism between them is realizable by
       a diffeomorphism.

       Now, assume that $m \geq 2$. In the special case where $n = 1$ and $m =2$, then $M$ is either a $3$-stage Bott manifold or a projective bundle over $\CP^2 \sharp \CP^2$ since $\Delta^1 \times \Delta^1 = G(4)$ supports either a
       $2$-stage Bott manifold or $\CP^2 \sharp \CP^2$. Since any quasitoric manifold whose cohomology ring is isomorphic to that of a Bott manifold is indeed equivalent to some Bott manifold by \cite{CS11},
       if $M$ is equivalent to a $3$-stage Bott manifold, then $M'$ is also equivalent to a $3$-stage Bott manifold. By \cite{Choi13}, any cohomology ring isomorphism between $3$-stage Bott manifolds $M$ and $M'$ is realizable by a diffeomorphism.

       If $m\geq2$ but $M$ is not equivalent to a $3$-stage Bott manifold, i.e., $B=\CP^2\#\CP^2$ or $n\geq 2$ or $m\geq 3$, then
       by Lemma~\ref{lem:CP2sharpCP2} and Lemma~\ref{prop:induce base isomorphism}, the restriction to $H^\ast(B)$ of a graded ring isomorphism $\varphi \colon H^\ast(M) \to H^\ast(M')$ is an isomorphism from $H^\ast(B)$ to $H^\ast(B')$, say $\psi$.
       Then, by Proposition~\ref{thm:quasitoric_equivalent_P(E)}, there is a diffeomorphism $g \colon B \to B'$ and $g^\ast \circ \psi$ is an automorphism of $H^\ast(B)$. Since $B \in \mathcal{B}$, by Theorem~\ref{thm:Wall64}, there is a self-automorphism $h$ of $B$ such that $h^\ast = g^\ast \circ \psi$. Consider a diffeomorphism $f = g \circ h^{-1} \colon B \to B'$. Then, $\psi$ is induced from $f^{-1}$.

       Take the pull-back bundle $f^\ast M'$ over $B$ via the map $f$. Then, it induces a diffeomorphism $\tilde{f} \colon f^\ast M' \to M'$. We note that $\tilde{f}^\ast \circ \varphi$ is an isomorphism whose restriction to $H^\ast(B)$ is an identity map. Therefore, by Lemma~\ref{lem:relationshp betweewn total chern and bundle} and Proposition~\ref{prop:SCR}, $\tilde{f}^\ast \circ \varphi$ is induced from a diffeomorphism $f' \colon f^\ast M' \to M$. Hence, $\varphi = (\tilde{f}^\ast)^{-1} \circ (f'^{-1})^\ast = (f' \circ \tilde{f}^{-1})^\ast$, and $f' \circ \tilde{f}^{-1} \colon M' \to M$ is a diffeomorphism. See the following
       commutative diagram.
        $$
        \xymatrix@1{
         H^\ast(M)\ar[dr]_{\varphi}\ar[drr]^{(f')^\ast}&&\\
         H^\ast(B)\ar[u]\ar[dr]_{\psi}\ar@{=}[drr] \ar[ddr]_{h^\ast} & H^\ast(M') \ar[r]_{\tilde{f}^\ast} & H^\ast(f^\ast M') \\
          & H^\ast(B')\ar[u] \ar[r]_{f^\ast} \ar[d]_{g^\ast} & H^\ast(B)\ar[u]\\
          & H^\ast(B) \ar[ur]_{(h^{-1})^\ast}&
        }
        $$
    \end{proof}

\bigskip

\end{document}